\newtheorem{theorem}{Theorem}[section]
\newtheorem{corollary}[theorem]{Corollary}
\newtheorem{proposition}[theorem]{Proposition}
\newtheorem{lemma}[theorem]{Lemma}
\newtheorem{conjecture}[theorem]{Conjecture}
\theoremstyle{definition}
\newtheorem{definition}[theorem]{Definition}
\newtheorem{example}[theorem]{Example}
\newtheorem{question}[theorem]{Question}
\newtheorem{remark}[theorem]{Remark}
\newcommand{\R}{\mathbb{R}}
\newcommand{\Z}{\mathbb{Z}}
\newcommand{\N}{\mathbb{N}}
\newcommand{\HH}{\mathcal{H}}
\newcommand{\permutahedralTiling}[1]{\mathcal{PT}_{#1}}
\newcommand{\heawoodGraph}[1]{H_{#1}}
\newcommand{\heawoodComplex}[1]{\mathcal{HC}_{#1}}
\newcommand{\affineGraph}[1]{\widetilde{G}_{#1}}
\newcommand{\HHaffine}{\widetilde{\mathcal{H}}}
\newcommand{\complexAffine}[1]{\widetilde{\mathcal{C}}_{#1}}
\newcommand{\latticeAffine}[1]{\widetilde{\mathcal{L}}_{#1}} 
\newcommand{\sublatticeAffine}[1]{\widetilde{\mathcal{S}}_{#1}} 
\newcommand{\fundamentalvectorsAffine}[1]{\widetilde F_{#1}} 
\newcommand{\fundamentalvectors}[1]{F_{#1}} 
\newcommand{\lattice}[1]{\mathcal{L}_{#1}} 
\newcommand{\torus}[1]{\mathcal{T}_{#1}} 
\newcommand{\sublattice}[1]{\mathcal{S}_{#1}} 
\newcommand{\x}{\mathbf{x}}
\newcommand{\y}{\mathbf{y}}
\newcommand{\kk}{\mathbf{k}}
\newcommand{\B}{\mathbf{B}} 
\newcommand{\perm}[1]{\operatorname{Perm}_{#1}}
\newcommand{\fundamentaltile}[1]{P_{#1}}
\newcommand{\conv}{\operatorname{conv}}
\newcommand{\vertices}{\operatorname{Vert}}
\newcommand{\equivalent}[1]{\sim_{#1}}
\newcommand{\Zrowspan}[1]{\mathbb{Z}\text{-rowspan}(#1)}
\newcommand{\Zspan}[1]{\mathbb{Z}\text{-span}(#1)}
\newcommand{\parallelepipedDomainAffine}[1]{\widetilde {\mathcal{D}}^{\operatorname{par}}_{#1}}
\newcommand{\permutahedronDomainAffine}[1]{\widetilde {\mathcal{D}}^{\operatorname{perm}}_{#1}}
\newcommand{\permutahedronDomain}[1]{{\mathcal{D}}^{\operatorname{perm}}_{#1}}
\DeclareRobustCommand{\stirling}{\genfrac{\{}{\}}{0pt}{}} 
\definecolor{blue}{rgb}{0, 0.445, 0.695}
\definecolor{bluishgreen}{rgb}{0, 0.626, 0.456}
\definecolor{red}{rgb}{0.896, 0.395, 0}
\definecolor{purple}{rgb}{0.783, 0.464, 0.640}
\definecolor{skyblue}{rgb}{0.359, 0.752, 0.973}
\definecolor{orange}{rgb}{0.999, 0.706, 0.0}
\definecolor{yellow}{rgb}{0.937, 0.890, 0.258}
\definecolor{olive}{RGB}{116,141,19}
\definecolor{green}{RGB}{108,208,48}
\definecolor{teal}{RGB}{47,77,62}
\definecolor{turquoise}{RGB}{86,235,211}
\definecolor{lightblue}{RGB}{150,178,153}
\definecolor{blue2}{RGB}{25,50,191}
\definecolor{indigo}{RGB}{142,128,251}
\definecolor{indigo2}{RGB}{114,32,246}
\definecolor{lightpurple}{RGB}{243,197,250}
\definecolor{purple2}{RGB}{105,66,131}
\definecolor{magenta}{RGB}{206,43,188}
\definecolor{brown}{RGB}{110,57,13}
\newcommand{\defn}[1]{{\color{green!50!black}\emph{#1}}}
\def\hyper@x#1,#2\relax{#1}
\def\hyper@y#1,#2\relax{#2}
\def\hyper@coords#1{#1}
\newif\ifhyper@vertical
\def\hyper@disc@computer#1#2{%
  \edef\hyper@toscan{(#1)}
  \tikz@scan@one@point\hyper@coords\hyper@toscan
  \edef\hyper@sx{\the\pgf@x}
  \edef\hyper@sy{\the\pgf@y}
  \edef\hyper@toscan{(#2)}
  \tikz@scan@one@point\hyper@coords\hyper@toscan
  \edef\hyper@ex{\the\pgf@x}
  \edef\hyper@ey{\the\pgf@y}
  \pgfmathsetmacro{\hyper@det}{\hyper@sx * \hyper@ey - \hyper@sy * \hyper@ex}
  \pgfmathparse{\hyper@det == 0 ? "\noexpand\hyper@verticaltrue" : "\noexpand\hyper@verticalfalse"}
  \pgfmathresult
  \ifhyper@vertical
  \edef\hyper@cmd{-- (\tikztotarget)}
  \else
  \pgfmathsetmacro{\hyper@mx}{(\hyper@ex + \hyper@sx)/2}
  \pgfmathsetmacro{\hyper@my}{(\hyper@ey + \hyper@sy)/2}
  \pgfmathsetmacro{\hyper@dx}{\hyper@ex - \hyper@sx}
  \pgfmathsetmacro{\hyper@dy}{\hyper@ey - \hyper@sy}
  \pgfmathsetmacro{\hyper@dradius}{\pgfkeysvalueof{/tikz/hyperbolic disc radius}}
  \pgfmathsetmacro{\hyper@t}{((\hyper@dradius)^2 - \hyper@sx * \hyper@ex - \hyper@sy * \hyper@ey)/(2 * (\hyper@sx * \hyper@ey - \hyper@sy * \hyper@ex))}
  \pgfmathsetmacro{\hyper@radius}{sqrt((\hyper@t)^2 + .25) * veclen(\hyper@dx,\hyper@dy)}
  \pgfmathsetmacro{\hyper@cx}{\hyper@mx + \hyper@t * \hyper@dy}
  \pgfmathsetmacro{\hyper@cy}{\hyper@my - \hyper@t * \hyper@dx}
  \pgfmathsetmacro{\hyper@sangle}{atan2(\hyper@sy-\hyper@cy,\hyper@sx - \hyper@cx)}
  \pgfmathsetmacro{\hyper@eangle}{atan2(\hyper@ey-\hyper@cy,\hyper@ex - \hyper@cx)}
  \pgfmathsetmacro{\hyper@eangle}{\hyper@eangle > \hyper@sangle + 180 ? \hyper@eangle - 360 : \hyper@eangle}
  \edef\hyper@cmd{arc[radius=\hyper@radius pt, start angle=\hyper@sangle, end angle=\hyper@eangle]}
  \fi
}
\tikzset{%
  hyperbolic disc radius/.initial={1cm},
  hyperbolic disc/.style={
    to path={
      \pgfextra{\hyper@disc@computer\tikztostart\tikztotarget}
      \hyper@cmd
    }
  },
}
\title[Generalized Heawood graphs]{Generalized Heawood graphs and \\ triangulations of tori}
\author{Cesar Ceballos}
\address{CC: TU Graz, Institut f\"ur Geometrie, Kopernikusgasse 24, 8010 Graz, Austria.}
\email{cesar.ceballos@tugraz.at}
\author{Joseph Doolittle}
\address{JD: TU Graz, Institut f\"ur Geometrie, Kopernikusgasse 24, 8010 Graz, Austria.}
\email{jdoolittle@tugraz.at}
\thanks{Both authors were supported by the Austrian Science Fund FWF, Project P 33278. 
Our work was also supported by the ANR-FWF International Cooperation Project PAGCAP, funded by the FWF Project I 5788. }
\date{\today}
\begin{document}
\begin{abstract}
    The Heawood graph is a remarkable graph that played a fundamental role in the development of the theory of graph colorings on surfaces in the 19th and 20th centuries. 
    Based on permutahedral tilings, we introduce a generalization of the classical Heawood graph indexed by a sequence of positive integers.
    We show that the resulting generalized Heawood graphs are toroidal graphs, which are dual to higher dimensional triangulated tori. We also present explicit combinatorial formulas for their $f$-vectors and study their automorphism groups.  
\end{abstract}

\maketitle

\tableofcontents
\section{Introduction}

The Heawood graph is a remarkable graph which played a fundamental role in the historical development of the theory of map colorings on surfaces. 
The four color theorem is an important result in this area, and perhaps one of the most well known results in mathematics in general. 
It states that for any map on a sphere, for example Europe, there is a coloring of that map with four colors, such that each region (or country) has one color and any two adjacent regions\footnote{Two regions are adjacent if they share a common boundary curve segment, not just a point.} have different colors.  

The four color problem has an interesting history, dating back to 1852 when the graduate student Francis Guthrie noticed that four colors where sufficient to color the map of England, and asked his younger brother Frederick Guthrie whether the same holds for any map on a sphere~\cite{guthrie_1880}. 
Frederick Guthrie posed this problem to Augustus De Morgan, a professor at University College London, who sent a letter to William Rowan Hamilton regarding this problem in October 23, 1852, see e.g.~\cite[Chapter~6]{biggs_graphtheory_1968}. 
The four color theorem was finally proved more than a hundred years later in 1976 by Kenneth Appel and Wolfgang Haken~\cite{AppelHaken_fourColorTheorem_1976,AppelHaken_fourColor_discharging_1977,AppelHaken_fourColor_reducibility_1977,AppelHaken_fourColor_ScientificAmerican_1977} after many false proofs and false counterexamples, and it is the first major result in mathematics that was proved using a computer.

One famous false proof of the four color theorem was given by Alfred Kempe in 1879~\cite{kempe_fourColor_1879}. 
His proof was announced in \emph{Nature}~\cite{kempe_fourColor_Nature_1880} and was regarded as an established fact for more than a decade. 
In 1890, Percy John Heawood found a gap in Kempe's proof, and modified his argument to show that five colors are sufficient to color a map on a sphere~\cite{heawood_torus_1984}. 
This became known as the five color theorem. 

In the same paper~\cite{heawood_torus_1984}, Heawood investigated coloring of maps on other surfaces. 
He showed that $N_p$ colors are sufficient to color a map on the oriented surface of genus $p\geq 1$, where 
\begin{align}\label{eq_heawoodnumber}
N_p = \left\lfloor \frac{7+\sqrt{1+48p}}{2} \right\rfloor
\end{align}
and $\lfloor x \rfloor$ is the largest integer not greater than $x$. 
For instance, it is possible to color any map on a torus (genus $p=1$ surface) using seven colors. 
Heawood also showed that for $p=1$ the number seven is tight, by showing a map of the torus where seven colors are necessary: a map consisting of seven regions for which any two regions are adjacent to each other. 

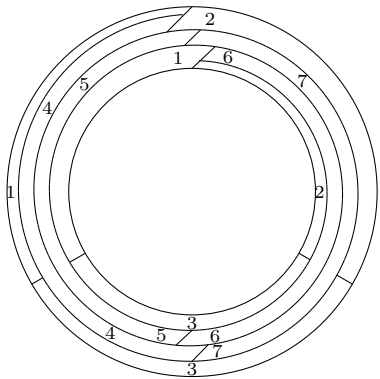
\begin{figure}[htb]
    \centering
        \begin{tikzpicture}%
    	[x={(240:2cm)},
    	y={(0:2cm)},
    	z={(120:2cm)},
        scale=0.82]

    \foreach \x in {1,...,358} 
    {
    \draw  (90-\x*3:2.125cm+\x*0.002083cm) to [bend left=1.5] (87-\x*3:2.125cm+0.002083cm+\x*0.002083cm); 
    }

    \draw (90:2cm) arc (90:450:2cm);
    \draw (90:3cm) arc (90:450:3cm);

    \draw (90:2cm) to (90-123*3:2.125cm+123*0.002083cm);
    \draw (90:3cm) to (90-237*3:2.125cm+237*0.002083cm);
    
    \draw (330:2cm) to (90-40*3:2.125cm+40*0.002083cm);
    \draw (330:3cm) to (90-280*3:2.125cm+280*0.002083cm);
    
    \draw (210:2cm) to (90-80*3:2.125cm+80*0.002083cm);
    \draw (210:3cm) to (90-320*3:2.125cm+320*0.002083cm);

    \draw (90-119*3:2.125cm+119*0.002083cm) to (90-241*3:2.125cm+241*0.002083cm);
    
    \draw (90-60*3:2.125cm+60*0.002083cm) to (90-182*3:2.125cm+182*0.002083cm);
    \draw (90-300*3:2.125cm+300*0.002083cm) to (90-178*3:2.125cm+178*0.002083cm);

    \node[font=\scriptsize] at (90-118*3:1.94cm+118*0.002083cm) {1};
    \node[font=\scriptsize] at (90-362*3:2.06cm+362*0.002083cm) {2};
    \node[font=\scriptsize] at (90-125*3:2cm+125*0.002083cm) {6};
    \node[font=\scriptsize] at (90-255*3:2cm+255*0.002083cm) {7};
    \node[font=\scriptsize] at (90-30*3:2cm+30*0.002083cm) {2};
    \node[font=\scriptsize] at (90-177*3:2cm+177*0.002083cm) {6};
    \node[font=\scriptsize] at (90-297*3:2cm+297*0.002083cm) {7};
    \node[font=\scriptsize] at (90-60*3:2cm+60*0.002083cm) {3};
    \node[font=\scriptsize] at (90-420*3:2cm+420*0.002083cm) {3};
    \node[font=\scriptsize] at (90-184*3:2cm+184*0.002083cm) {5};
    \node[font=\scriptsize] at (90-310*3:2cm+310*0.002083cm) {4};
    \node[font=\scriptsize] at (90-450*3:2cm+450*0.002083cm) {1};
    \node[font=\scriptsize] at (90-225*3:2cm+225*0.002083cm) {5};
    \node[font=\scriptsize] at (90-340*3:2cm+340*0.002083cm) {4};

    \end{tikzpicture}   
    \caption{Reproduction of Heawood's map on a torus from~1890. 
    The inner and outer circle are identified to produce a torus.}
    \label{fig_Heawood_newFigure}
\end{figure}

We have reproduced Heawood's original map in~\Cref{fig_Heawood_newFigure}. 
The original source can be accessed from the SUB G\"ottinger Digitalisierungszentrum as Fig. 16 in the scanned document: 

\url{https://gdz.sub.uni-goettingen.de/id/PPN600494829_0024?tify=%7B%22pages%22%3A%5B405%5D%2C%22pan%22%3A%7B%22x%22%3A0.41%2C%22y%22%3A0.544%7D%2C%22view%22%3A%22toc%22%2C%22zoom%22%3A0.621%7D}

A reproduction of Heawood's figure also appears in~\cite[Fig. 7.6]{biggs_graphtheory_1968}.

The fact that the number $N_p$ is tight for a genus $p$ orientable surface became known as the Heawood's Conjecture, and was finally proved in 1968~\cite{ringel_youngs_torusmap_1968}. 
The case of genus~$p=1$ (the torus) is often refereed to as the seven color theorem. 
For more details and historical information about these map coloring problems we refer to~\cite{BookRingelMapColorTheorem} and~\cite[Chapters~6 and~7]{biggs_graphtheory_1968}.  

The more symmetric representations of Heawood's map in~\Cref{fig_Heawood_Figure_Leech_Ungar_Coxter} were described independently by John Leech in~\cite[Figures~2 and~3]{leech_seven_1955} and Peter Ungar in~\cite[Page~342]{Ungar_diagrams_1953}; Coxeter's bottom right figure~\cite[Figure~10]{coxeter_four_1959} is a 3D illustration of Ungar's model. These illustrations have inspired beautiful mathematical artwork models such as the ones presented in~\cite{goldstine_beading_2013,goldstine_capturing_2014,chas_crochet_2018,goldstine_eightheptagons_2020}.

\begin{figure}[htb]
    \begin{subfigure}[b]{0.45\textwidth}
        \centering
            \begin{tikzpicture}%
    	[x={(240:2cm)},
    	y={(0:2cm)},
    	z={(120:2cm)},
        scale=0.12]

    \coordinate (w1) at (3,0,0);    
    \coordinate (w2) at (0,3,0);    
    \coordinate (w3) at (0,0,3);   
    
    \def\metatilepositions{
    0/2/-1,
    1/0/-2,
    2/-1/0,
    0/0/0
    }
    
    \def\tilepositions{
    0/0/0/1,
    1/0/0/5,
    0/1/0/2,
    1/1/0/6,
    0/0/1/3,    
    0/1/1/4,    
    1/0/1/7
    }
    
    \foreach \e/\f/\g in \metatilepositions{
        \begin{scope}[shift={($\e*(3,0,0)+\f*(0,3,0)+\g*(0,0,3)$)}]
    \foreach \a/\b/\c/\d in \tilepositions{
        \begin{scope}[shift={($\a*(3,0,0)+\b*(0,3,0)+\c*(0,0,3)$)}]
            \draw[thick,draw=blue] (1,2,3) -- (2,1,3) -- (3,1,2) -- (3,2,1) -- (2,3,1) -- (1,3,2) -- (1,2,3);      
            \node at (0,0,0) {\d};
        \end{scope}
        }      
        \draw[ultra thick,draw=black] (1,0,5) -- (0,1,5) -- (0,2,4) -- (-1,3,4) --  (-1,4,3) -- (0,4,2) -- (0,5,1) -- (1,5,0) -- (2,4,0) -- (3,4,-1) -- (4,3,-1) -- (4,2,0) -- (5,1,0) -- (5,0,1) -- (4,0,2) -- (4,-1,3) -- (3,-1,4) -- (2,0,4) -- (1,0,5);
        \end{scope} 
    }
    \end{tikzpicture}%
        \caption{Leech~\cite[Figure~2]{leech_seven_1955}}
        \label{fig_Heawood_Figure_Leech}
    \end{subfigure}    
    \begin{subfigure}[b]{0.45\textwidth}
        \centering
        \begin{tikzpicture}[scale=0.18,every node/.style={font=\tiny}]
\draw (0,0) -- (28,0) -- (28,9) -- (0,9) -- (0,0);
\draw (28,4.5) -- (25,3) -- (27,6) -- (21,3) -- (23,6) -- (17,3) -- (19,6) -- (13,3) -- (15,6) -- (9,3) -- (11,6) -- (5,3) -- (7,6) -- (1,3) -- (3,6) -- (0,4.5);
\draw (27,6) -- (28,7);
\draw (23,6) -- (26,9);
\draw (19,6) -- (22,9);
\draw (25,3) -- (22,0);
\draw (15,6) -- (18,9);
\draw (21,3) -- (18,0);
\draw (11,6) -- (14,9);
\draw (17,3) -- (14,0);
\draw (7,6) -- (10,9);
\draw (13,3) -- (10,0);
\draw (3,6) -- (6,9);
\draw (9,3) -- (6,0);
\draw (5,3) -- (2,0);
\draw (1,3) -- (0,2);
\draw (26,0) -- (28,2);
\draw (0,7) -- (2,9);
\node at (2,1.8){1};
\node at (2,7.2){2};
\node at (6,1.8){2};
\node at (6,7.2){3};
\node at (10,1.8){3};
\node at (10,7.2){4};
\node at (14,1.8){4};
\node at (14,7.2){5};
\node at (18,1.8){5};
\node at (18,7.2){6};
\node at (22,1.8){6};
\node at (22,7.2){7};
\node at (26,1.8){7};
\node at (26,7.2){1};

\node[font=\tiny] at (0.5,8.4){1};
\node[font=\tiny] at (27.5,0.6){1};
\node[font=\tiny] at (0.5,3.8){7};
\node[font=\tiny] at (27.5,5.2){2};
\end{tikzpicture}
        \caption{Leech~\cite[Figure~3]{leech_seven_1955}}
    \end{subfigure}
    \begin{subfigure}[b]{0.45\textwidth}
        \centering
        \begin{tikzpicture}[scale=0.38,every node/.style={font=\tiny}]
\def\xstep{14/19}
\def\ystep{10/19}

\draw (0,0) -- (14,0) -- (14,10) -- (0,10) -- (0,0);
\draw (0,10) -- (5,0);
\draw (2,10) -- (7,0);
\draw (4,10) -- (9,0);
\draw (6,10) -- (11,0);
\draw (8,10) -- (13,0);
\draw (10,10) -- (14,2);
\draw (12,10) -- (14,6);
\draw (0,2) -- (1,0);
\draw (0,6) -- (3,0);
\node at (14-1*\xstep,10-1*\ystep){i};
\node at (14-3*\xstep,10-3*\ystep){v};
\node at (14-5*\xstep,10-5*\ystep){r};
\node at (14-7*\xstep,10-7*\ystep){o};
\node at (14-9*\xstep,10-9*\ystep){y};
\node at (14-11*\xstep,10-11*\ystep){g};
\node at (14-13*\xstep,10-13*\ystep){b};
\node at (14-15*\xstep,10-15*\ystep){i};
\node at (14-17*\xstep,10-17*\ystep){v};
\node at (14-18.5*\xstep,10-18.5*\ystep){r};
\end{tikzpicture}
        \caption{Ungar~\cite[Page~342]{Ungar_diagrams_1953}}
    \end{subfigure}
    \begin{subfigure}[b]{0.45\textwidth}
        \centering
        \begin{tikzpicture}[scale=0.45,every node/.style={font=\tiny}]

\def\fo{1/14}
        \begin{scope}[shift={(3,0)}]
        \draw (1,0) arc (0:360:1);
        \draw (3,0) arc (0:360:3);
        \draw[variable=\t,domain=0:0.5,samples=40] plot ({(2+cos(\t*360))*cos(\t*360*5*\fo+7*\fo*360+0*\fo*360)},{(2+cos(\t*360))*sin(\t*360*5*\fo+0*\fo*360)});
        \draw[variable=\t,domain=0:0.5,samples=40] plot ({(2+cos(\t*360))*cos(\t*360*5*\fo+7*\fo*360+2*\fo*360)},{(2+cos(\t*360))*sin(\t*360*5*\fo+2*\fo*360)});
        \draw[variable=\t,domain=0:0.5,samples=40] plot ({(2+cos(\t*360))*cos(\t*360*5*\fo+7*\fo*360+4*\fo*360)},{(2+cos(\t*360))*sin(\t*360*5*\fo+4*\fo*360)});
        \draw[variable=\t,domain=0:0.5,samples=40] plot ({(2+cos(\t*360))*cos(\t*360*5*\fo+7*\fo*360+6*\fo*360)},{(2+cos(\t*360))*sin(\t*360*5*\fo+6*\fo*360)});
        \draw[variable=\t,domain=0:0.5,samples=40] plot ({(2+cos(\t*360))*cos(\t*360*5*\fo+7*\fo*360+8*\fo*360)},{(2+cos(\t*360))*sin(\t*360*5*\fo+8*\fo*360)});
        \draw[variable=\t,domain=0:0.5,samples=40] plot ({(2+cos(\t*360))*cos(\t*360*5*\fo+7*\fo*360+10*\fo*360)},{(2+cos(\t*360))*sin(\t*360*5*\fo+10*\fo*360)});
        \draw[variable=\t,domain=0:0.5,samples=40] plot ({(2+cos(\t*360))*cos(\t*360*5*\fo+7*\fo*360+12*\fo*360)},{(2+cos(\t*360))*sin(\t*360*5*\fo+12*\fo*360)});
        \node at (180-0*\fo*360:2){g};
        \node at (180-2*\fo*360:2){y};
        \node at (180-4*\fo*360:2){o};
        \node at (180-6*\fo*360:2){r};
        \node at (180+2*\fo*360:2){b};
        \node at (180+4*\fo*360:2){i};
        \node at (180+6*\fo*360:2){v};
        \end{scope}
        \begin{scope}[shift={(-3,0)}]
        \draw (1,0) arc (0:360:1);
        \draw (2,0) arc (0:360:2);
        \draw (3,0) arc (0:360:3);
        \draw[variable=\t,domain=-0.25:0,samples=40] plot ({(2+cos(\t*360))*cos(\t*360*5*\fo+0*\fo*360)},{(2+cos(\t*360))*sin(\t*360*5*\fo+0*\fo*360)});
        \draw[variable=\t,domain=0.5:0.75,samples=40] plot ({(2+cos(\t*360))*cos(\t*360*5*\fo+0*\fo*360)},{(2+cos(\t*360))*sin(\t*360*5*\fo+0*\fo*360)});
        \draw[variable=\t,domain=-0.25:0,samples=40] plot ({(2+cos(\t*360))*cos(\t*360*5*\fo+2*\fo*360)},{(2+cos(\t*360))*sin(\t*360*5*\fo+2*\fo*360)});
        \draw[variable=\t,domain=0.5:0.75,samples=40] plot ({(2+cos(\t*360))*cos(\t*360*5*\fo+2*\fo*360)},{(2+cos(\t*360))*sin(\t*360*5*\fo+2*\fo*360)});
        \draw[variable=\t,domain=-0.25:0,samples=40] plot ({(2+cos(\t*360))*cos(\t*360*5*\fo+4*\fo*360)},{(2+cos(\t*360))*sin(\t*360*5*\fo+4*\fo*360)});
        \draw[variable=\t,domain=0.5:0.75,samples=40] plot ({(2+cos(\t*360))*cos(\t*360*5*\fo+4*\fo*360)},{(2+cos(\t*360))*sin(\t*360*5*\fo+4*\fo*360)});
        \draw[variable=\t,domain=-0.25:0,samples=40] plot ({(2+cos(\t*360))*cos(\t*360*5*\fo+6*\fo*360)},{(2+cos(\t*360))*sin(\t*360*5*\fo+6*\fo*360)});
        \draw[variable=\t,domain=0.5:0.75,samples=40] plot ({(2+cos(\t*360))*cos(\t*360*5*\fo+6*\fo*360)},{(2+cos(\t*360))*sin(\t*360*5*\fo+6*\fo*360)});
        \draw[variable=\t,domain=-0.25:0,samples=40] plot ({(2+cos(\t*360))*cos(\t*360*5*\fo+8*\fo*360)},{(2+cos(\t*360))*sin(\t*360*5*\fo+8*\fo*360)});
        \draw[variable=\t,domain=0.5:0.75,samples=40] plot ({(2+cos(\t*360))*cos(\t*360*5*\fo+8*\fo*360)},{(2+cos(\t*360))*sin(\t*360*5*\fo+8*\fo*360)});
        \draw[variable=\t,domain=-0.25:0,samples=40] plot ({(2+cos(\t*360))*cos(\t*360*5*\fo+10*\fo*360)},{(2+cos(\t*360))*sin(\t*360*5*\fo+10*\fo*360)});
        \draw[variable=\t,domain=0.5:0.75,samples=40] plot ({(2+cos(\t*360))*cos(\t*360*5*\fo+10*\fo*360)},{(2+cos(\t*360))*sin(\t*360*5*\fo+10*\fo*360)});
        \draw[variable=\t,domain=-0.25:0,samples=40] plot ({(2+cos(\t*360))*cos(\t*360*5*\fo+12*\fo*360)},{(2+cos(\t*360))*sin(\t*360*5*\fo+12*\fo*360)});
        \draw[variable=\t,domain=0.5:0.75,samples=40] plot ({(2+cos(\t*360))*cos(\t*360*5*\fo+12*\fo*360)},{(2+cos(\t*360))*sin(\t*360*5*\fo+12*\fo*360)});
        \node at (0*\fo*360:1.5){b};
        \node at (2*\fo*360:1.5){g};
        \node at (4*\fo*360:1.5){y};
        \node at (6*\fo*360:1.5){o};
        \node at (8*\fo*360:1.5){r};
        \node at (10*\fo*360:1.5){v};
        \node at (12*\fo*360:1.5){i};
        \node at (0*\fo*360:2.5){y};
        \node at (2*\fo*360:2.5){o};
        \node at (4*\fo*360:2.5){r};
        \node at (6*\fo*360:2.5){v};
        \node at (8*\fo*360:2.5){i};
        \node at (10*\fo*360:2.5){b};
        \node at (12*\fo*360:2.5){g};
        \end{scope}
\end{tikzpicture}
        \caption{Coxeter~\cite[Figure~10]{coxeter_four_1959}}
    \end{subfigure}
    \caption{Reproductions of more symmetric representations of Heawood's map.}
    \label{fig_Heawood_Figure_Leech_Ungar_Coxter}
\end{figure}
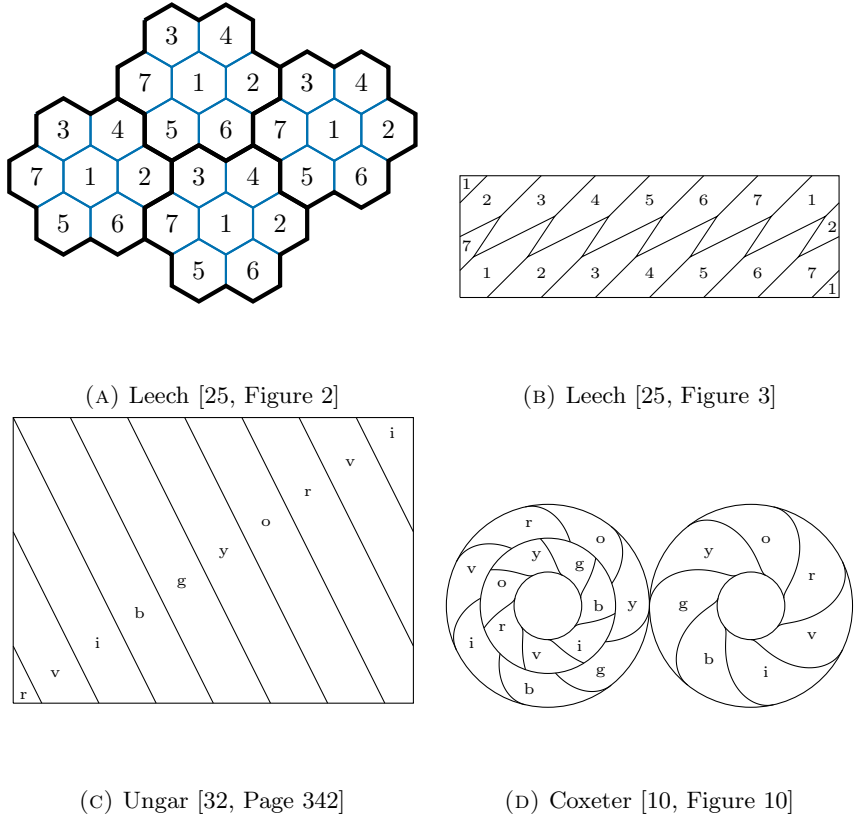
 
\begin{figure}[htb]
    \begin{subfigure}[b]{0.48\textwidth}
    \centering
     HeawoodFigure111
    \caption{$\kk=(1,1,1)$}
    \label{fig_heawoods_dimension2_a}
    \end{subfigure}
    \begin{subfigure}[b]{0.48\textwidth}
    \centering
     HeawoodFigure121
    \caption{$\kk=(1,2,1)$}
    \end{subfigure}
    \par\medskip \medskip
    \begin{subfigure}[b]{0.49\textwidth}
    \centering
     HeawoodFigure222
    \caption{$\kk=(2,2,2)$}
    \end{subfigure}
    \begin{subfigure}[b]{0.49\textwidth}
    \centering
     HeawoodFigure223
    \caption{$\kk=(2,2,3)$}
    \end{subfigure}
    \par\medskip \medskip 
    \begin{subfigure}[b]{0.49\textwidth}
    \centering
     HeawoodFigure312
    \caption{$\kk=(3,1,2)$}
    \label{fig_heawoods_dimension2_e}
    \end{subfigure}
    \begin{subfigure}[b]{0.49\textwidth}
    \centering
     HeawoodFigure321
    \caption{$\kk=(3,2,1)$}
    \end{subfigure}
    \caption{Examples of the Heawood graph $\heawoodGraph{\kk}$ in dimension 2. The opposite sides (with the same color) are identified, making this graph a toroidal graph. The torus is the gray hexagon with opposite edges identified.}
    \label{fig_heawoods_dimension2}
\end{figure}

The Heawood graph is defined as the graph of Heawood's map: its vertices are the common points of three pairwise adjacent regions, and the edges are the lines connecting these points. 
It is a toroidal and distance-transitive graph on 14 vertices and 21 edges.
Our favorite representation of Heawood's graph is Leech's highly symmetric representation using regular hexagons on~\Cref{fig_Heawood_Figure_Leech}.
Note that here, the graph is the graph induced by the edge graph of the seven hexagons inside one of the fundamental domains bounded by the black-bold lines, where the boundary is identified according to how these fundamental domains glue together. 
This identification is illustrated on~\Cref{fig_heawoods_dimension2_a}.

The main purpose of this paper is to introduce a generalization $\heawoodGraph{\kk}$ of Heawood's graph that extends Leech's representation. 
Our generalization is indexed by a sequence $\kk=(k_1,\dots,k_{d+1})\in \N^{d+1}$ of positive integers for some~$d\geq 2$, and recovers the classical Heawood graph when $\kk=(1,1,1)$. 
As in the classical case, we will show that $\heawoodGraph{\kk}$ is a toroidal graph which is naturally embedded in a~$d$-dimensional torus.

When there are three parameters, the generalized Heawood graph $\heawoodGraph{(k_1,k_2,k_3)}$ is a 2-dimensional generalization of the classical Heawood graph. 
It is obtained by gluing together $\prod (k_i+1) - \prod k_i$ regular hexagons: From a ``central" hexagon one adds $k_i$ hexagons pointing in the direction at angle $(i-2) \frac{2\pi}{3}$ for $i=1,2,3$; then fill the ``big hexagon'' that they generate with other small hexagons.
\footnote{The factor $(i-2)$ can be changed to $i$ and the result would be just a rotation giving the same graph. We have chosen a factor of $(i-2)$ by convenience to match notions arising from permutahedra later on.} 
Several examples are illustrated in~\Cref{fig_heawoods_dimension2}.
We also provide three different choices of fundamental domain in~\Cref{fig_fundamentaldomains}, where the torus can be visualized in its more common rectangular presentation.
\footnote{The generators of the middle parallelogramm are $(k_1+1)w_1-k_2w_2$ and $(k_2+1)w_2-k_3w_3$, using the notation of~\Cref{sec_affineArrangement_permutahedralTiling}. 
The generators of a fundamental Parallelepiped in higher dimensions are $(k_i+1)w_i-k_{i+1}w_{i+1}$ for $i=1,\dots,d$, see~\Cref{prop_torus} and its proof.}

\begin{figure}[htb]
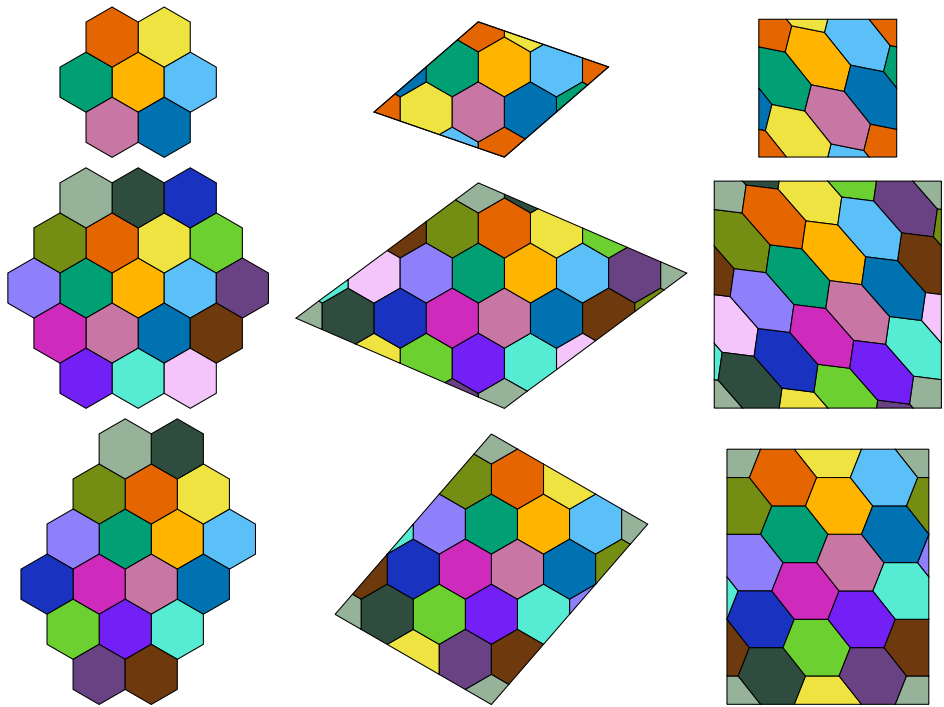

    \centering
    \include{figures/fig_fundamentaldomains}
    \caption{Different presentations of the fundamental domain for the Heawood graphs $\heawoodGraph{(1,1,1)}$, $\heawoodGraph{(2,2,2)}$ and 
    $\heawoodGraph{(3,1,2)}$.}
    \label{fig_fundamentaldomains}
\end{figure}

The case $d=3$ gives 3-dimensional generalizations of the Heawood graph. The smallest choice of parameters is $\heawoodGraph{(1,1,1,1)}$, which is obtained by gluing $15=2^4-1^4$ polytopes that are 3-dimensional permutahedra, see~\Cref{fig_permutahedraltiling3D}.
The boundary of the result is identified to itself to form the complex into a 3-dimensional torus (see~\Cref{sec_heawoodComplex}).

\begin{figure}[htb]
    \centering
    \include{figures/figure1111collapsed}
    \caption{The Heawood graph $\heawoodGraph{(1,1,1,1)}$ is the edge graph of this portion of the 3-dimensional permutahedral tiling after properly identifying its boundary (see~\Cref{sec_heawoodComplex}), making it into a toroidal graph.}
    \label{fig_permutahedraltiling3D}
\end{figure}

Our motivation for this work arose in our study of Gil Kalai's conjecture  on reconstruction of simplicial spheres~\cite{blind_puzzles_1987,kalai_blog_2009}.
The classical Heawood graph is the dual graph of a minimal triangulation of the 2-dimensional torus, which we used in~\cite{ceballosdoolittle_subword_2022} to provide examples of non-reconstructable triangulated 2-tori.  
Except for the triangulation dual to the classical Heawood graph $\heawoodGraph{(1,1,1)}$, the techniques from~\cite{ceballosdoolittle_subword_2022} can not be applied to any other $\heawoodGraph{\kk}$, because their automorphism group coincides with the automorphism group of their dual graph (the generalized Heawood graph). 

One special object of interest is the dual triangulation of $\heawoodGraph{(1,1,\dots,1)}$.
This triangulation consists of $2^{d+1}-1$ vertices and appeared in the work of Wolfgang K\"uhnel and Gunter Lassmann from the 1980's in~\cite{kuhnel_lassmann_threedtorus_1984,kuhnel_lassmann_symmetrictorii_1988}.
Interestingly, it is conjectured to be a minimal triangulation of the $d$-dimensional torus~\cite[Conjecture 21]{lutz_triangulated_2005} (see \Cref{conj_minimaltriangulation}). 
This conjecture holds in dimensions $d=1,2$ ($d=1$ is trivial and $d=2$ corresponds to the known classical Heawood graph case), and is supported by strong computational evidence in dimension $d=3$~\cite{lutz_triangulated_2005}.

\section{The generalized Heawood graph}\label{sec_heawood_graph}
The purpose of this section is to introduce the
generalized Heawood graph $\heawoodGraph{\kk}$ indexed by a sequence $\kk=(k_1,\dots,k_{d+1})\in \N^{d+1}$ of positive integers for some~$d\geq 2$. 
This is obtained by making some identifications on an infinite {graph $\affineGraph{d}$}, which is the graph of the $d$-dimensional permutahedral tiling, similar to the \(2\)-dimensional approach in \cite{senechal_tilingtorus_1988}. 
Before explaining this identification, we provide a direct definition of the graph $\affineGraph{d}$. 
The connection to permutahedral tilings will come to light in later sections. 

The \defn{graph $\affineGraph{d}$} is an infinite graph whose vertices $\vertices(\affineGraph{d})$ are the elements of the affine subspace 
\[
\{
\x=(x_1,\dots,x_{d+1}):\ x_1+\dots +x_{d+1}= 1+\dots +(d+1)
\}\subset \R^{d+1}
\]
whose coordinate entries are integers containing all the numbers $1,2,\dots, d+1$ mod~$(d+1)$. 
For instance, all permutations of $[d+1]$ satisfy this property. 
Two vertices $\x,\y$ of $\affineGraph{d}$ are connected by an edge if $\y-\x=e_j-e_i$ for some~$i\neq j$, where $e_1,\dots,e_{d+1}$ denote the standard basis vectors in $\R^{d+1}$.
\Cref{fig_permutahedral_tiling_graph_2} shows a portion of the graph of the permutahedral tiling in dimension $d=2$; the blue hexagon is the convex hull of all permutations of $[3]$. 

\begin{figure}[htb]
    \centering
        \begin{tikzpicture}%
    	[x={(240:2cm)},
    	y={(0:2cm)},
    	z={(120:2cm)},
        scale=0.3]

    \coordinate (w1) at (3,0,0);    
    \coordinate (w2) at (0,3,0);    
    \coordinate (w3) at (0,0,3);    
        
    \def\tilepositions{
    0/0/0 ,
    1/0/0 ,
    2/0/0 ,
    0/1/0 ,
    1/1/0 ,
    2/1/0 ,
    0/2/0 ,
    1/2/0 ,
    2/2/0 ,
    0/3/0 ,
    1/3/0 ,
    2/3/0 ,
    0/0/1 ,    
    0/1/1 ,    
    0/2/1 ,    
    0/3/1 ,    
    0/0/2 ,    
    0/1/2 ,    
    0/2/2 ,    
    0/3/2 ,
    1/0/1 ,
    2/0/1 ,
    1/0/2 ,
    2/0/2 
    }

    \node (132) at (1,3,2) {};
    \node (213) at (2,1,3) {};
    \node (231) at (2,3,1) {};
    \node (312) at (3,1,2) {};
    \node (321) at (3,2,1) {};

    \draw[draw=white, fill=blue!10] (1,2,3) -- (2,1,3) -- (3,1,2) -- (3,2,1) -- (2,3,1) -- (1,3,2) -- cycle;
    
    \foreach \a/\b/\c in \tilepositions{
        \begin{scope}[shift={($\a*(3,0,0)+\b*(0,3,0)+\c*(0,0,3)$)}]
            \node (123) at (1,2,3) {
            \pgfmathparse{int(1+2*\a-1*\b-1*\c)}\ifthenelse{-1 < \pgfmathresult}{\pgfmathresult}{\pgfmathparse{int(neg(1+2*\a-1*\b-1*\c))}$\bar\pgfmathresult$}%
            \pgfmathparse{int(2-1*\a+2*\b-1*\c)}\ifthenelse{-1 < \pgfmathresult}{\pgfmathresult}{\pgfmathparse{int(neg(2-1*\a+2*\b-1*\c))}$\bar\pgfmathresult$}%
            \pgfmathparse{int(3-1*\a-1*\b+2*\c)}\ifthenelse{-1 < \pgfmathresult}{\pgfmathresult}{\pgfmathparse{int(neg(3-1*\a-1*\b+2*\c))}$\bar\pgfmathresult$}%
            };
            \node (132) at (1,3,2) {
            \pgfmathparse{int(1+2*\a-1*\b-1*\c)}\ifthenelse{-1 < \pgfmathresult}{\pgfmathresult}{\pgfmathparse{int(neg(1+2*\a-1*\b-1*\c))}$\bar\pgfmathresult$}%
            \pgfmathparse{int(3-1*\a+2*\b-1*\c)}\ifthenelse{-1 < \pgfmathresult}{\pgfmathresult}{\pgfmathparse{int(neg(3-1*\a+2*\b-1*\c))}$\bar\pgfmathresult$}%
            \pgfmathparse{int(2-1*\a-1*\b+2*\c)}\ifthenelse{-1 < \pgfmathresult}{\pgfmathresult}{\pgfmathparse{int(neg(2-1*\a-1*\b+2*\c))}$\bar\pgfmathresult$}%
            };
            \node (213) at (2,1,3) {};
            \node (231) at (2,3,1) {};
            \node (312) at (3,1,2) {};
            \node (321) at (3,2,1) {};
        
            \draw[blue] (213) -- (123) -- (132) -- (231);      
        \end{scope}
        }            

    \end{tikzpicture}   
    \caption{The graph $\affineGraph{2}$ of the permutahedral tiling for $d=2$. 
    Commas and parenthesis are omited for simplicity. 
    An overlined number $\overline k$ represent the negative number $-k$. For instance, $\overline{1}43$ represents the vertex $(-1,4,3)$.}
    \label{fig_permutahedral_tiling_graph_2}
\end{figure}
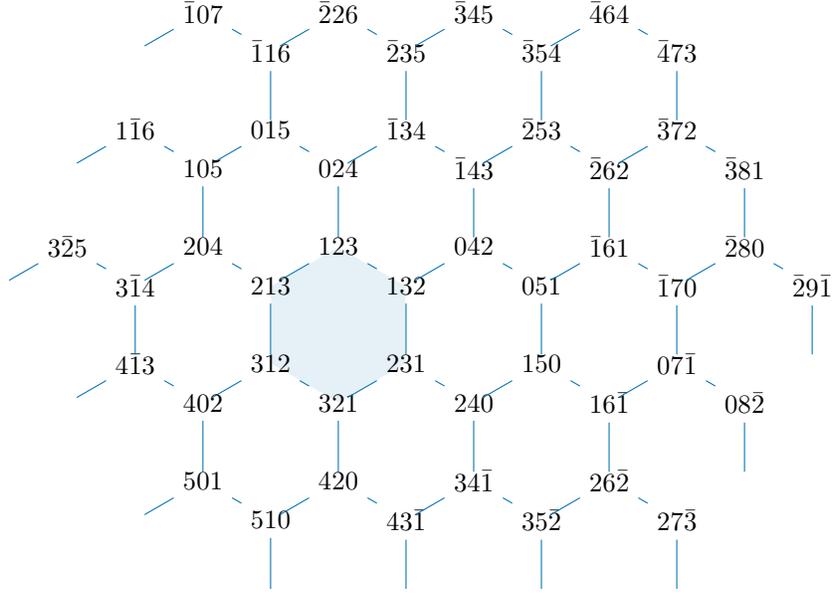

For $\kk=(k_1,\dots,k_{d+1})\in \N^{d+1}$ we denote by $M_\kk$ the matrix 

\begin{align}\label{eq_matrix}
M_\kk=
\begin{pmatrix}
    k_1+1  & -k_2 &  &  &&&\\ 
       & k_2+1 & -k_3 &  &&& \\
      &   & \ddots  & \ddots  &&&  \\
    &  &   & \ddots  & \ddots  &&  \\
   & &  &   & k_d+1  & -k_{d+1}    \\
   -k_1 &  &   &   && k_{d+1}+1    \\
\end{pmatrix}
\end{align}
and let $w_1,\dots,w_{d+1}\in \Z^{d+1}$ be the vectors
\begin{align}\label{eq_vector_wi}
w_i = (d+1)e_i - \sum_{j=1}^{d+1} e_j.
\end{align}
Equivalently, $w_i$ is the vector with $i$th coordinate equal to $d$ and all other coordinates equal to $-1$.

Note that if $\x\in \vertices(\affineGraph{d})$ then $\x+w_i\in \vertices(\affineGraph{d})$. 
Moreover, if $\x,\y\in\vertices(\affineGraph{d})$ are connected by an edge then $\x+w_i$ and $\y+w_i$ are connected by an edge as well. 
In other words, the graph $\affineGraph{d}$ is invariant under translations by the vectors~$w_1,\dots,w_{d+1}$.   

We denote by $\lattice{d}$ the lattice of integer linear combinations of the \(w_i\)
\begin{align}\label{eq_lattice_one}
\lattice{d} := 
\{
a_1w_1+\dots + a_{d+1}w_{d+1} :\ a_1,\dots,a_{d+1}\in 
\Z^{d+1}\},
\end{align}
and by $\sublattice{\kk}\subset \lattice{d}$ the sublattice 
\begin{align}\label{eq_sublattice}
\sublattice{\kk} := 
\left\{ 
a_1w_1+\dots + a_{d+1}w_{d+1} :\   
\begin{array}{l}
    (a_1,\dots,a_{d+1})=(b_1,\dots,b_{d+1}) M_{\kk} \\ 
    \text{for some } b_1,\dots,b_{d+1}\in \Z
\end{array}
\right\}.
\end{align}
That is, $\sublattice{\kk}$ is the set of linear combinations of $w_1,\dots ,w_{d+1}$ whose coefficient vector $(a_1,\dots,a_{d+1})$ is an integer linear combination of the rows of $M_\kk$. 

Using this sublattice, we can produce a finite graph by making some identifications on the vertices and edges of the infinite graph $\affineGraph{d}$. 

We say that $\x,\y\in \vertices(\affineGraph{d})$ are \defn{$\kk$-equivalent}, in which case we write $\x\equivalent{\kk} \y$, if 
\begin{align}
\y=\x+v \text{ for some } v\in \sublattice{\kk}.
\end{align}
Similarly, two edges of $\affineGraph{d}$ are {$\kk$-equivalent} if one is a translation of the other by a vector in $\sublattice{d}$. 

\begin{definition}[Generalized Heawood graph]
\label{def_heawoodGraph}
Let $\kk=(k_1,\dots,k_{d+1})\in \N^{d+1}$ be a sequence of positive integers for some $d\geq 2$. 
The \defn{Heawood graph} $\heawoodGraph{\kk}$ is the graph whose vertices and edges are the $\kk$-equivalent classes of vertices and edges of $\affineGraph{d}$, respectively. 
In other words,~$\heawoodGraph{\kk}$ is the graph obtained by identifying vertices and edges of $\affineGraph{d}$ up to translation by vectors in $\sublattice{\kk}$.     
\end{definition}

\begin{example}[Classical Heawood graph]
The classical Heawood graph is obtained when \(d=2\) and $\kk=(1,1,1)$, and is illustrated in~\Cref{fig_figure111}. 
The lattice $\lattice{2}$ consists of integer linear combinations of the vectors 
\begin{align*}
w_1&=(2,-1,-1) \\
w_2&=(-1,2,-1) \\
w_3&=(-1,-1,2)
\end{align*}
The associated matrix is
\begin{align*}
M_{(1,1,1)}=
\begin{pmatrix}
    2  & -1 & \\ 
       & 2 & -1 \\
     -1 &   & 2   \\
\end{pmatrix}
\end{align*}
The sublattice $\sublattice{(1,1,1)}$ consists of integer linear combinations of the rows of this matrix, when considered as vectors of coefficients of the $w_i$'s:
\begin{align*}
2w_1-w_2\\
2w_2-w_3\\
2w_3-w_1
\end{align*}
Figure~\ref{fig_figure111} shows a tiling of the plane, where each fundamental tile consists of seven hexagons: one hexagon in the center together with its six surrounding hexagons. The barycenters of the central hexagons correspond exactly to elements of the sublattice $\sublattice{(1,1,1)}$. 
The equivalence relation $\cong_\kk$ then identifies vertices and edges via translations that transform one fundamental tile into another.   
\end{example}

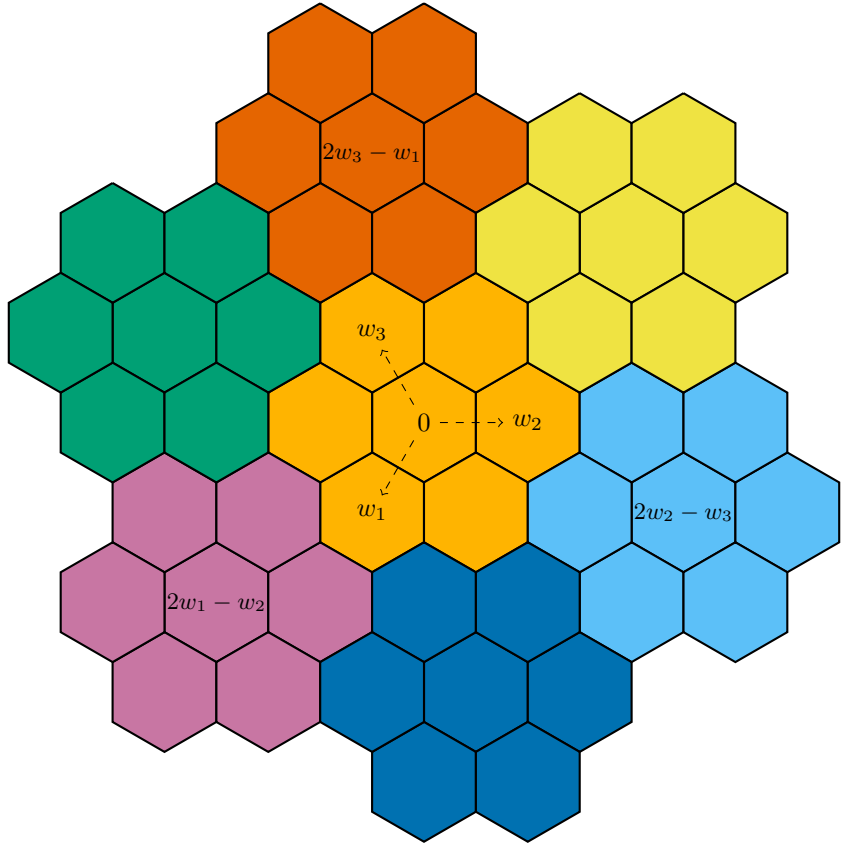
\begin{figure}[htb]
    \centering
        \begin{tikzpicture}%
    	[x={(240:2cm)},
    	y={(0:2cm)},
    	z={(120:2cm)},
        scale=0.23]

    \coordinate (w1) at (3,0,0);    
    \coordinate (w2) at (0,3,0);    
    \coordinate (w3) at (0,0,3);    

    \draw (1,2,3) -- (2,1,3) -- (3,1,2) -- (3,2,1) -- (2,3,1) -- (1,3,2) -- (1,2,3); 

    \def\tilepositions{
    0/0/0,
    1/0/0,
    0/1/0,
    1/1/0,
    0/0/1,    
    0/1/1,    
    1/0/1
    }

    \foreach \a/\b/\c in \tilepositions{
        \begin{scope}[shift={($\a*(3,0,0)+\b*(0,3,0)+\c*(0,0,3)$)}]
            \draw[thick,draw=black,fill=orange] (1,2,3) -- (2,1,3) -- (3,1,2) -- (3,2,1) -- (2,3,1) -- (1,3,2) -- (1,2,3);      
        \end{scope}
        }            

    \def\tileshifts{
    2/-1/0/purple, 0/2/-1/skyblue, -1/0/2/red, -2/1/0/yellow, 0/-2/1/bluishgreen, 1/0/-2/blue}
    
    \foreach \d\e\f\tilecolor in \tileshifts{
        \begin{scope}[shift={($\d*(3,0,0)+\e*(0,3,0)+\f*(0,0,3)$)}]        
        
            \foreach \a/\b/\c in \tilepositions{
                \begin{scope}[shift={($\a*(3,0,0)+\b*(0,3,0)+\c*(0,0,3)$)}]
                    \draw[thick,draw=black,fill=\tilecolor] (1,2,3) -- (2,1,3) -- (3,1,2) -- (3,2,1) -- (2,3,1) -- (1,3,2) -- (1,2,3);      
                \end{scope}
                }            
        \end{scope}
        }

    \node (n0) at (0,0,0) {$0$};
    \node (n1) at (w1) {$w_1$};
    \node (n2) at (w2) {$w_2$};
    \node (n3) at (w3) {$w_3$};
    \draw[dashed,->] (n0) -- (n1);
    \draw[dashed,->] (n0) -- (n2);
    \draw[dashed,->] (n0) -- (n3); 
        
    \node at ($2*(w1)-1*(w2)+0*(w3)$) {\small $2w_1-w_2$};
    \node at ($0*(w1)+2*(w2)-1*(w3)$) {\small $2w_2-w_3$};
    \node at ($-1*(w1)+0*(w2)+2*(w3)$) {\small $2w_3-w_1$};
    \end{tikzpicture}

    \caption{The classical Heawood graph $\heawoodGraph{(1,1,1)}$ as a quotient of the graph of the permutahedral tiling in dimension two.}
    \label{fig_figure111}
\end{figure}

\begin{example}[$\kk=(2,3,2)$ for $d=2$]
Another example, where $\kk=(2,3,2)$, is illustrated in \Cref{fig_fundamentaltile_translates}. 
The vectors $w_1,w_2,w_3$ are the same as in the previous example, and the matrix is

\begin{align*}
M_{(2,3,2)}=
\begin{pmatrix}
    3  & -3 & \\ 
       & 4 & -2 \\
     -2 &   & 3   \\
\end{pmatrix}
\end{align*}
The sublattice $\sublattice{(2,3,2)}$ consists of the integer linear combinations of 
\begin{align*}
3w_1-3w_2\\
4w_2-2w_3\\
3w_3-2w_1
\end{align*}
Figure~\ref{fig_fundamentaltile_translates}  shows a tiling of the plane, where each fundamental tile gives a copy of the Heawood graph $\heawoodGraph{(2,3,2)}$, which is obtained by identifying the boundary of this tile to itself, via translations that transform one fundamental tile into another.  
\end{example}

Our aim is to prove some structural and enumerative properties of the generalized Heawood graph. Our first result is the following.

\begin{theorem}\label{thm_heawoodgraph}
    The generalized Heawood graph $\heawoodGraph{\kk}$ is a vertex-transitive
    graph with~$d! D_\kk$ many vertices and $\frac{(d+1)!}{2}D_\kk$ many edges, where
    \begin{align}\label{eq_determinat}
    D_{\kk}= \det M_\kk        
    = \prod (k_i+1) -\prod{k_i}. 
    \end{align}
\end{theorem}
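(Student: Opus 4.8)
The plan is to read off all three quantities from the lattice quotient defining $\heawoodGraph{\kk}$, and to obtain vertex-transitivity from a transitive symmetry group. First I would verify the determinant identity $D_\kk=\det M_\kk=\prod(k_i+1)-\prod k_i$: in the Leibniz expansion of the sparse cyclic bidiagonal matrix $M_\kk$ the only permutations meeting nonzero entries are the identity, giving $\prod(k_i+1)$, and the full $(d+1)$-cycle, giving $\operatorname{sign}\cdot\prod(-k_i)=-\prod k_i$ after a sign count. Next I would compute the index $[\lattice{d}:\sublattice{\kk}]$. Since $\sum_i w_i=0$ and $w_1,\dots,w_d$ are linearly independent, the map $(a_i)\mapsto\sum_i a_iw_i$ identifies $\lattice{d}\cong\Z^{d+1}/\Z\mathbf 1$. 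The rows of $M_\kk$ sum to $\mathbf 1$, so $\mathbf 1$ lies in $R:=\operatorname{rowspan}_\Z(M_\kk)$, whence $\sublattice{\kk}=R/\Z\mathbf 1$ inside $\lattice{d}=\Z^{d+1}/\Z\mathbf 1$ and $[\lattice{d}:\sublattice{\kk}]=[\Z^{d+1}:R]=|\det M_\kk|=D_\kk$.

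For the vertex count I would count $\sublattice{\kk}$-orbits of $\vertices(\affineGraph{d})$ in two stages. Translations by $\lattice{d}$ act freely, and since $\sublattice{\kk}\subseteq\lattice{d}$ each $\lattice{d}$-orbit splits into exactly $[\lattice{d}:\sublattice{\kk}]=D_\kk$ orbits under $\sublattice{\kk}$; so it suffices to show there are $d!$ orbits under $\lattice{d}$. Each vertex reduces mod $(d+1)$ to a complete residue system, i.e. a bijection $[d+1]\to\Z/(d+1)$, and because $w_i\equiv-\mathbf 1\pmod{d+1}$ for every $i$, translating by $\lattice{d}$ shifts this residue pattern by a global constant. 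I would show the $\lattice{d}$-orbits biject with residue patterns modulo global shift: the forward map is well defined by the last sentence, and conversely, if two vertices have residue patterns differing by a constant, a suitable element of $\lattice{d}$ makes them congruent mod $(d+1)$, after which their difference lies in $(d+1)\{z:\sum z_i=0\}\subseteq\lattice{d}$, using $w_i-w_j=(d+1)(e_i-e_j)$. There are $(d+1)!/(d+1)=d!$ such shift-classes, giving $d!\,D_\kk$ vertices.

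For edges I would first check that $\affineGraph{d}$ is $(d+1)$-regular: from a vertex with residue pattern $r$, the move $e_j-e_i$ produces a vertex exactly when $r_j\equiv r_i-1\pmod{d+1}$, and there is precisely one such ordered pair for each of the $d+1$ residue values. The quotient map is a covering, since $e_j-e_i\notin\sublattice{\kk}$ (its reduction mod $(d+1)$ is not a multiple of $\mathbf 1$ for $d\ge 2$), so no edge becomes a loop and no two edges incident to a common vertex are identified. Hence $\heawoodGraph{\kk}$ is again $(d+1)$-regular on $d!\,D_\kk$ vertices, and thus has $\tfrac12(d+1)\cdot d!\,D_\kk=\tfrac{(d+1)!}{2}D_\kk$ edges.

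The main obstacle is vertex-transitivity. The infinite graph $\affineGraph{d}$ is vertex-transitive under $S_{d+1}\ltimes\lattice{d}$, where $\lattice{d}$ is transitive within each shift-class and $S_{d+1}$ (permuting coordinates) acts transitively on the $d!$ shift-classes. The difficulty is descent to the quotient: translations by $\lattice{d}$ descend but fix each of the $d!$ classes, whereas a coordinate permutation $\tau$ descends only when $\tau(\sublattice{\kk})=\sublattice{\kk}$, which fails for generic $\kk$, so the transitive symmetries on $\heawoodGraph{\kk}$ need not lift from $\affineGraph{d}$. The central symmetry $\x\mapsto(d+2)\mathbf 1-\x$ always descends (it negates $\sublattice{\kk}$) and reverses shift-classes, which together with translations already settles $d=2$; for $d\ge 3$ I would look for additional automorphisms compatible with $\equivalent{\kk}$ that permute the $d!$ classes transitively, most plausibly by realizing $\heawoodGraph{\kk}$ as a Cayley or coset graph on a group of order $d!\,D_\kk$, or by constructing such automorphisms of the finite quotient directly. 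This step is where I expect essentially all of the genuine work to lie.
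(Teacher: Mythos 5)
Your counting argument is correct, but it takes a genuinely different route from the paper. The paper never counts on the graph directly: it shows in \Cref{prop_duality_heawood_torus} that $\heawoodGraph{\kk}$ is the dual graph of $\torus{\kk}$ and reads the vertex and edge counts off as $f_d$ and $f_{d-1}$ from \Cref{thm_maintheorem}, whose proof classifies the faces of $\heawoodComplex{\kk}$ via the two Rotation Lemmas as ordered partitions $[B_1,\dots,B_k]$ with $1\in B_1$ together with a coset in $\lattice{d}/\sublattice{\kk}$; the index $D_\kk$ is computed in \Cref{lem_quotient_size_affine} by exhibiting the explicit fundamental representatives $\fundamentalvectorsAffine{\kk}$. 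Your residue-pattern count of the $d!$ translation classes is an equivalent but more elementary bookkeeping (patterns modulo global shift correspond to the paper's singleton ordered partitions with $1$ in the first block), and your index computation --- the rows of $M_\kk$ sum to $\mathbf 1$, so $\sublattice{\kk}\cong \Zrowspan{M_\kk}/\Z\mathbf 1$ inside $\lattice{d}\cong\Z^{d+1}/\Z\mathbf 1$ and the index is $|\det M_\kk|$ --- is cleaner than the paper's and makes transparent precisely the point that the remark following \Cref{lem_quotient_size_affine} flags as non-obvious from their proof. You also actually prove the determinant identity, which the paper only asserts. One small patch: your parenthetical in the edge count only rules out loops; to keep $(d+1)$-regularity in the quotient you must also verify that no two neighbors of a common vertex are identified, and for $d=2$ the difference of two neighbors can equal $\pm w_j$, which \emph{is} congruent to a constant vector mod $3$, so you genuinely need $\pm w_j\notin\sublattice{\kk}$ --- true for $k_i\geq 1$ by the fundamental-representative argument. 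For $d\geq 3$ your mod-$(d+1)$ test does dispose of all cases.

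On vertex-transitivity, your diagnosis of the obstacle is exactly right, and the additional work you anticipate cannot in fact be carried out: this clause is where the paper itself goes astray. The paper's entire proof of transitivity is the sentence that it ``follows from the translational symmetry of $\complexAffine{d}$ (any vertex can be translated to any other vertex)'' --- but the objects on which translations act transitively are the vertices of $\complexAffine{d}$, i.e.\ the facets of $\heawoodComplex{\kk}$ (equivalently the vertices of $\torus{\kk}$), not the vertices of $\heawoodGraph{\kk}$, which, as you compute, fall into $d!$ translation classes. Moreover, the extra automorphisms you hoped to construct for $d\geq 3$ are excluded by the paper's own Section 6: by \Cref{thm_torus_automorphism} and \Cref{thm_heawood_graph_automorphisms} (and their non-equal-$k_i$ analogues), the automorphism group of $\heawoodGraph{\kk}$ has order $2\ell D_\kk$ with $\ell$ dividing $d+1$, except for $\kk=(1,1,1)$; transitivity on $d!\,D_\kk$ vertices would force $d!\mid 2\ell\leq 2(d+1)$, which fails for every $d\geq 3$. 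Concretely, for $\kk=(1,1,1,1)$ the graph has $90$ vertices and automorphism group of order $120$, and $90\nmid 120$. So the vertex-transitivity claim of the theorem is consistent with the rest of the paper only for $d=2$, where your translations-plus-central-inversion argument is a correct and complete proof; for $d\geq 3$ your refusal to claim the statement was the right call, as the theorem contradicts the paper's own automorphism computations.
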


Similarly to the classical case, the generalized Heawood graph is the dual graph of a triangulated torus, for which a simple combinatorial formula for its number of faces can be explicitly given.   

We denote by $\stirling{n}{k}$ the Stirling number of the second kind, which counts the number of ways to partition a set of $n$ objects into $k$ non-empty subsets. 
These numbers can be explicitly calculated as 
\begin{align}
    \stirling{n}{k} = 
    \frac{1}{k!} \sum_{i=1}^{k} (-1)^{k-i}{\binom{k}{i}} i^n. 
\end{align}

\begin{theorem}\label{thm_maintheorem}
The generalized Heawood graph $\heawoodGraph{\kk}$ is the dual graph of a triangulation of a $d$-dimensional torus with $f$-vector $(f_0,f_1,\dots ,f_d)$ determined by
\begin{align}
f_i=i! \, \stirling{d+1}{i+1} \, D_{\kk}.    
\end{align}
In particular, 
\begin{align}
    f_0 = D_\kk, \qquad
    f_d = d!\, D_\kk, \qquad
    f_{d-1} = \frac{(d+1)!}{2}\, D_\kk.
\end{align}
\end{theorem}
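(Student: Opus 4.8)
The plan is to realize the triangulation as the combinatorial dual of the $d$-dimensional permutahedral tiling, push it down to the torus $\R^d/\sublattice{\kk}$, and then read off the entire $f$-vector with one incidence count. Throughout I write $\perm{d+1}$ for the $d$-dimensional permutahedron whose vertices are the permutations of $(1,\dots,d+1)$; its nonempty faces are indexed by the ordered set partitions of $[d+1]$, a face of dimension $j$ corresponding to an ordered partition into $d+1-j$ blocks. Consequently $\perm{d+1}$ has exactly $(d+1-j)!\,\stirling{d+1}{d+1-j}$ faces of dimension $j$, this being the number of ordered set partitions of $[d+1]$ into $d+1-j$ blocks. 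The tiles of the permutahedral tiling are the $\lattice{d}$-translates of $\perm{d+1}$, so locally every tile carries this same collection of faces.

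First I would set up the duality between the tiling and a triangulation. The permutahedral tiling is (up to rescaling) the Voronoi tiling attached to the lattice $\lattice{d}$, and its dual Delaunay subdivision $T$ is a triangulation: each $j$-dimensional face $F$ of the tiling is contained in exactly $d+1-j$ tiles, whose barycenters are affinely independent and span a $(d-j)$-simplex $\sigma_F$, and these simplices assemble into a simplicial complex with $j$-faces of the tiling in inclusion-reversing bijection with $(d-j)$-faces of $T$. In particular the tiles (the $d$-faces of the tiling) become the vertices of $T$, the vertices of the tiling (where $d+1$ tiles meet) become the maximal simplices of $T$, and two maximal simplices share a ridge exactly when the corresponding tiling-vertices are joined by an edge of the tiling; thus the $1$-skeleton of the tiling, namely $\affineGraph{d}$, is precisely the dual graph of $T$. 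The uniform local count ``$d+1-j$ tiles around a $j$-face'' is equivalent to ``$\sigma_F$ is a $(d-j)$-simplex'' and is the structural heart of this step.

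Next I would descend to the torus and count. Since $D_\kk=\det M_\kk=\prod(k_i+1)-\prod k_i>0$, the sublattice $\sublattice{\kk}$ has full rank $d$ and acts freely by translations, so $\R^d/\sublattice{\kk}$ is a $d$-torus; because $\sublattice{\kk}\subseteq\lattice{d}$ is contained in the translation symmetry lattice of the tiling, both the tiling and its dual triangulation descend, giving a triangulated $d$-torus whose dual graph is $\heawoodGraph{\kk}$. The number of tiles in a fundamental domain is $[\lattice{d}:\sublattice{\kk}]=\det M_\kk=D_\kk$. By duality $f_i$ equals the number of $(d-i)$-faces of the quotient tiling, which I count by incident pairs $(\text{tile},\,(d-i)\text{-face of it})$: each of the $D_\kk$ tiles carries $(i+1)!\,\stirling{d+1}{i+1}$ faces of dimension $d-i$ (the $j=d-i$ case above, where $d+1-j=i+1$), while each such face lies in exactly $i+1$ tiles. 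Hence
\[
(i+1)\,f_i = D_\kk\,(i+1)!\,\stirling{d+1}{i+1}, \qquad\text{so}\qquad f_i = i!\,\stirling{d+1}{i+1}\,D_\kk .
\]
The three displayed special cases follow by evaluating $\stirling{d+1}{1}=\stirling{d+1}{d+1}=1$ and $\stirling{d+1}{d}=\binom{d+1}{2}$; alternatively $f_d=d!\,D_\kk$ and $f_{d-1}=\tfrac{(d+1)!}{2}D_\kk$ are immediate from the vertex and edge counts of $\heawoodGraph{\kk}$ in \Cref{thm_heawoodgraph} together with its being the dual graph of $T$.

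The main obstacle I anticipate is not the enumeration, which is a routine double count once the local data are in place, but the geometric and topological bookkeeping of the first two steps. One must verify that the Delaunay cells are genuinely simplices (equivalently, the affine independence of the $d+1-j$ tile barycenters surrounding each $j$-face), and, more delicately, that after quotienting by $\sublattice{\kk}$ the result is an honest simplicial complex triangulating the torus rather than a self-identified complex; the latter is exactly what guarantees that in the quotient each tile still has its full complement of faces and each $(d-i)$-face still meets exactly $i+1$ distinct tiles, validating the count. I would establish this by showing that $\sublattice{\kk}$ identifies no two faces lying in a common simplex of the universal-cover triangulation, which is where the precise form of $M_\kk$ and the positivity of all $k_i$ enter.
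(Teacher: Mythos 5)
Your proposal is correct and arrives at the right formula, but your counting mechanism is genuinely different from the paper's. The duality step is the same in substance as \Cref{prop_duality_heawood_torus} (the paper realizes the dual pair as $\complexAffine{d}$, the type $\widetilde A_d$ alcove triangulation, versus $\permutahedralTiling{d}$, rather than invoking Voronoi--Delaunay duality, but the content matches). Where you diverge is the enumeration: the paper counts codimension-$i$ faces of the Heawood complex by constructing a \emph{canonical representative} for each one --- two Rotation Lemmas (\Cref{lem_rotation_partition_one,lem_rotation_partition_two}) show that the lattice translates of a face $[B_1,\dots,B_{i+1}]$ of $\perm{d}$ landing back on faces of $\perm{d}$ are exactly its cyclic block-rotations, so every face of the tiling is \emph{uniquely} of the form $[B_1,\dots,B_{i+1}]+v$ with $1\in B_1$ and $v\in\lattice{d}/\sublattice{\kk}$, giving $i!\,\stirling{d+1}{i+1}\,D_\kk$ directly (\Cref{cor_faces_HeawoodComplex}). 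You instead double-count incidences $(\text{tile},\text{codimension-}i\text{ face})$ using the simplicity of the tiling --- each codimension-$i$ face lies in exactly $i+1$ tiles --- and divide by $i+1$. These are two views of the same symmetry: the $i+1$ tiles around a face correspond exactly to the $i+1$ cyclic rotations of its ordered partition, so your division by $i+1$ is the orbit count that the paper's normalization $1\in B_1$ performs by choosing representatives. Your route is shorter and avoids the rotation lemmas, at the cost of needing the simplicity of the dual subdivision and the no-self-identification condition in the quotient (each tile retains distinct faces, the $i+1$ tiles around a face stay distinct); you correctly flag both, and the latter is precisely what \Cref{prop_torus} secures from $k_i\geq 1$ via the absence of short vectors in $\sublatticeAffine{\kk}$. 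Two small cautions: the step $[\lattice{d}:\sublattice{\kk}]=\det M_\kk$ is subtler than an index-equals-determinant reflex, since $M_\kk$ is written in the redundant generators $w_1,\dots,w_{d+1}$ with $\sum_i w_i=0$ --- the paper devotes \Cref{lem_quotient_size_affine} and the following remark to exactly this point, so cite it rather than assert it; and your ``alternative'' derivation of $f_d$ and $f_{d-1}$ from \Cref{thm_heawoodgraph} would be circular in the paper's logical order, since that theorem is deduced from this one.
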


Table~\ref{table_modified_stirling} shows the factor
$c(i,d):=f_i/D_\kk$ 
for some small values. 

\begin{table}[htb]
    \centering
    \begin{tabular}{|l|cccccc|}
    \hline
    \diagbox{$d$}{$i$} & $0$ & $1$ & $2$ & $3$ & $4$ & $5$\\
    \hline
    $2$ & $1$ & $3$ & $2$ &&&\\
    $3$ & $1$ & $7$ & $12$  & $6$ &&\\
    $4$ & $1$ & $15$ & $50$ & $60$ & $24$ &\\
    $5$ & $1$ & $31$ & $180$ & $390$ & $360$ & $120$\\
    \hline
    \end{tabular}
    \caption{The factor $c(i,d)$ for some small values of $i$ and $d$.}
    \label{table_modified_stirling}
\end{table}

\begin{example}[$d=2$]
We consider the classical Heawood graph, when $\kk=(1,1,1)$. 
The factor $D_{(1,1,1)}=7$ counts the number of hexagons in~\Cref{fig_heawoods_dimension2_a}. The $f$-vector of its dual 2-dimensional triangulated torus is 
\[
(1\cdot 7, 3\cdot 7, 2\cdot 7)=(7,21,14).
\]
Interpreting this in the graph setting, we have 7 hexagons, 21 edges, and 14 vertices. 

When $d=2$, with a general \(\kk\), we have $D_\kk$ many hexagons, $3D_\kk$ many edges, and $2D_\kk$ many vertices. 
Table~\ref{table_counting_dim2} shows these numbers for all the examples in~\Cref{fig_heawoods_dimension2}.

\begin{table}[htb]
    \centering
    \begin{tabular}{|l|ccc|}
    \hline
    \diagbox{$\kk$}{$i$} & 0 & 1 & 2 \\
    \hline
    $(1,1,1)$ & $1\cdot 7$ & $3\cdot 7$ & $2\cdot 7$\\
    $(1,2,1)$ & $1\cdot 10$ & $3\cdot 10$ & $2\cdot 10$\\
    $(2,2,2)$ & $1\cdot 19$ & $3\cdot 19$ & $2\cdot 19$\\
    $(2,2,3)$ & $1\cdot 24$ & $3\cdot 24$ & $2\cdot 24$\\
    $(3,2,1)$ & $1\cdot 18$ & $3\cdot 18$ & $2\cdot 18$\\
    $(3,1,2)$ & $1\cdot 18$ & $3\cdot 18$ & $2\cdot 18$\\
    \hline
    \end{tabular}
    \caption{The number of hexagons, edges, and vertices for the Heawood graphs in~\Cref{fig_heawoods_dimension2}.}
    \label{table_counting_dim2}
\end{table}
\end{example}

\section{Background on the permutahedral tiling}

In order to prove these results, it is useful to have some further background about the permutahedral tiling. 

\subsection{The braid arrangement and the permutahedron}
For a fixed a positive integer $d\geq 2$, 
we consider the collection of hyperplanes
$\{ H_{ij} \}_{1\leq i < j \leq d+1}$ given by 
\[
H_{ij}= 
\{
\x\in \R^{d+1} :\ x_j-x_i=0
\}. 
\]

All these hyperplanes intersect in a one dimensional subspace spanned by the vector $(1,\dots,1)\in \R^{d+1}$, which is orthogonal to the $d$-dimensional subspace 
\[
V=\{\x\in \R^{d+1} :\ x_1+\dots x_{d+1}=0\} \subset \R^{d+1}.
\] 

The \defn{braid arrangement} $\HH_d$ is the restriction of the arrangement $\{ H_{ij} \}_{1\leq i < j \leq d+1}$ to $V$. 
An example in dimension $d=2$ is shown in Figure~\ref{fig_braid_arrangement}.

The hyperplane $H_{ij}$ can also be described as the orthogonal complement of the vector $e_j-e_i$.
The collection $\Phi=\{e_j-e_i\}_{1\leq i\neq j \leq d+1}$ is known as the \defn{root system of type $A_d$}. 
It has a natural decomposition $\Phi=\Phi^+\bigsqcup \Phi^-$ into \defn{positive roots} and \defn{negative roots}, where~$\Phi^+$ (resp. $\Phi^-$) is the set of roots $e_j-e_i$ with $i<j$ (resp. $i>j$). 
The \defn{simple roots} are $\alpha_i=e_{i+1}-e_i$ for $i=1,\dots d$.

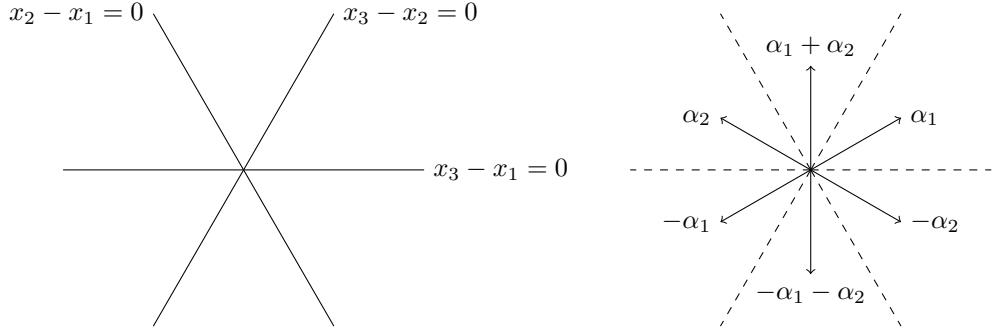
\begin{figure}[htb]
    \centering
       \begin{tabular}{cc}
    \begin{tikzpicture}%
    	[x={(240:2cm)},
    	y={(0:2cm)},
    	z={(120:2cm)},
        scale=0.4]
    
    \coordinate (0) at (0,0,0);
    \coordinate (w1) at (3,0,0);    
    \coordinate (w2) at (0,3,0);    
    \coordinate (w3) at (0,0,3);    
    \coordinate (-w1) at (-3,0,0);
    \coordinate (-w2) at (0,-3,0);    
    \coordinate (-w3) at (0,0,-3);   

    \coordinate (alpha1) at (-1,1,0);
    \coordinate (alpha2) at (0,-1,1);
    
     \draw (0) -- (w1);
     \draw (0) -- (w2);
     \draw (0) -- (w3);
     \draw (0) -- (-w1);
     \draw (0) -- (-w2);
     \draw (0) -- (-w3);
    
     \node[left] at (w3) {$x_2-x_1=0$};
     \node[right] at (-w1) {$x_3-x_2=0$};
     \node[right] at (w2) {$x_3-x_1=0$}; 

    \end{tikzpicture}  
    &
    \quad
    \begin{tikzpicture}
        	[x={(240:2cm)},
    	y={(0:2cm)},
    	z={(120:2cm)},
        scale=0.4]

    \coordinate (0) at (0,0,0);
    \coordinate (w1) at (3,0,0);    
    \coordinate (w2) at (0,3,0);    
    \coordinate (w3) at (0,0,3);    
    \coordinate (-w1) at (-3,0,0);
    \coordinate (-w2) at (0,-3,0);    
    \coordinate (-w3) at (0,0,-3);   

     \draw[dashed] (0) -- (w1);
     \draw[dashed] (0) -- (w2);
     \draw[dashed] (0) -- (w3);
     \draw[dashed] (0) -- (-w1);
     \draw[dashed] (0) -- (-w2);
     \draw[dashed] (0) -- (-w3);

    \coordinate (alpha1) at (-1,1,0);
    \coordinate (alpha2) at (0,-1,1);  
    \coordinate (alpha12) at ($(alpha1)+(alpha2)$);
    \coordinate (minusalpha1) at ($-1*(alpha1)$);
    \coordinate (minusalpha2) at ($-1*(alpha2)$);  
    \coordinate (minusalpha12) at ($-1*(alpha1)-1*(alpha2)$);

    \draw[->] (0) -- (alpha1);
    \draw[->] (0) -- (alpha2);
    \draw[->] (0) -- (alpha12);
    \draw[->] (0) -- (minusalpha1);
    \draw[->] (0) -- (minusalpha2);
    \draw[->] (0) -- (minusalpha12);

    \node[right] at (alpha1) {$\alpha_1$};    
    \node[left] at (alpha2) {$\alpha_2$};    
    \node[above] at (alpha12) {$\alpha_1+\alpha_2$};    
    \node[right] at (minusalpha2) {$-\alpha_2$};    
    \node[left] at (minusalpha1) {$-\alpha_1$};    
    \node[below] at (minusalpha12) {$-\alpha_1-\alpha_2$};    
    \end{tikzpicture}
    \end{tabular}
    \caption{The braid arrangement $\HH_2$ and the root system of type~$A_2$.}
    \label{fig_braid_arrangement}
\end{figure}

The braid arrangement $\HH_d$ decomposes the space $V$ into $(d+1)!$ connected components. 
Each of these components is a simplicial cone determined by a set of inequalities of the form 
\[
x_{i_1} \leq x_{i_2} \leq \dots \leq x_{i_{d+1}}, 
\]
where $[i_1,i_2,\dots,i_{d+1}]$ is a permutation of $[d+1]$.

More generally, the braid arrangement induces a simplicial cone decomposition of~$V$ consisting of all full dimensional cones mentioned above, together with all of their faces. 
The cones in this decomposition are naturally indexed by ordered partitions of $[d+1]$ as we now describe. 
An \defn{order partition} of $[d+1]$ is a tuple $[B_1,\dots ,B_k]$ such that $[d+1]=\bigsqcup_{i=1}^k B_i$, and the sets $B_i$ are called the \defn{blocks}.
The ordered partition $[B_1,\dots ,B_k]$
indexes the cone determined by the following set of equalities and inequalities
\begin{align*}
   x_i=x_{i'} & \quad \text{ if } i,i'\in B_j \text{ for some } j, \\
   x_i\leq x_{i'} & \quad \text{ if } i\in B_{j} \text{ and } i'\in B_{j'} \text{ with } j\leq j'.
\end{align*}
In particular, the ordered partition consisting of only one block corresponds to the origin, since  $x_1=\dots=x_{d+1}=0$; and the ordered partitions consisting of $d+1$ blocks correspond to permutations and index the full dimensional cones. 
In general, an ordered partition consisting of $k$ blocks corresponds to a $k-1$ dimensional cone.  

Furthermore, the cone associated to an ordered partition $\B =[B_1,\dots, B_k]$ is a face of the cone of another ordered partition $\B' =[B_1',\dots, B_\ell']$ if and only if~$\B$ is \defn{refined} by $\B'$, meaning that the blocks of $\B$ are unions of consecutive blocks of $\B'$. 

An example is illustrated in Figure~\ref{fig_braid_arrangement_cells}, where all the cells of the simplicial cone decomposition are labeled by ordered partitions.
For instance, the ordered partition $[2,3,1]$ corresponds to the 2-dimensional cone $x_2\leq x_3 \leq x_1$, while $[23,1]$ corresponds to the 1-dimensional cone $x_2=x_3\leq x_1$. Since $[23,1]$ is refined by~$[2,3,1]$ ($23$ is the union of the consecutive blocks $2$ and $3$), then the cone labeled $[23,1]$ is a face of the cone labeled $[2,3,1]$.
Note that we abuse of notation by writing~$23$ for the set $\{2,3\}$, and $1$ for the set $\{1\}$ for simplicity. We follow this convention throughout the paper.

\begin{figure}[htb]
    \centering
    \begin{tabular}{cc}
    \begin{tikzpicture}%
    	[x={(240:2cm)},
    	y={(0:2cm)},
    	z={(120:2cm)},
        scale=0.4]
    
    \coordinate (0) at (0,0,0);
    \coordinate (w1) at (3,0,0);    
    \coordinate (w2) at (0,3,0);    
    \coordinate (w3) at (0,0,3);    
    \coordinate (-w1) at (-3,0,0);
    \coordinate (-w2) at (0,-3,0);    
    \coordinate (-w3) at (0,0,-3);   
    
    \coordinate (alpha1) at (-1.7,1.7,0);
    \coordinate (alpha2) at (0,-1.7,1.7);  
    \coordinate (alpha12) at ($(alpha1)+(alpha2)$);
    \coordinate (minusalpha1) at ($-1*(alpha1)$);
    \coordinate (minusalpha2) at ($-1*(alpha2)$);  
    \coordinate (minusalpha12) at ($-1*(alpha1)-1*(alpha2)$);

    \node at (alpha12) {$[1,2,3]$};
    \node at (alpha2) {$[2,1,3]$};
    \node at (minusalpha1) {$[2,3,1]$};
    \node at (minusalpha12) {$[3,2,1]$};
    \node at (minusalpha2) {$[3,1,2]$};
    \node at (alpha1) {$[1,3,2]$};
    
    \node (n0) at (0) {};
    
     \draw (n0) -- (w1);
     \draw (n0) -- (w2);
     \draw (n0) -- (w3);
     \draw (n0) -- (-w1);
     \draw (n0) -- (-w2);
     \draw (n0) -- (-w3);
    \end{tikzpicture}  
         & 
    \qquad     
    \begin{tikzpicture}%
    	[x={(240:2cm)},
    	y={(0:2cm)},
    	z={(120:2cm)},
        scale=0.4]
    
    \coordinate (0) at (0,0,0);
    \coordinate (w1) at (3,0,0);    
    \coordinate (w2) at (0,3,0);    
    \coordinate (w3) at (0,0,3);    
    \coordinate (-w1) at (-3,0,0);
    \coordinate (-w2) at (0,-3,0);    
    \coordinate (-w3) at (0,0,-3);   

    \coordinate (alpha1) at (-1.85,1.85,0);
    \coordinate (alpha2) at (0,-1.85,1.85);  
    \coordinate (alpha12) at ($(alpha1)+(alpha2)$);
    \coordinate (minusalpha1) at ($-1*(alpha1)$);
    \coordinate (minusalpha2) at ($-1*(alpha2)$);  
    \coordinate (minusalpha12) at ($-1*(alpha1)-1*(alpha2)$);

    \node (n0) at (0) {$[123]$};
    
     \draw (n0) -- (w1) node [midway,fill=white] {$[23,1]$};
     \draw (n0) -- (w2) node [midway,fill=white] {$[13,2]$};
     \draw (n0) -- (w3) node [midway,fill=white] {$[12,3]$};
     \draw (n0) -- (-w1) node [midway,fill=white] {$[1,23]$};
     \draw (n0) -- (-w2) node [midway,fill=white] {$[2,13]$};
     \draw (n0) -- (-w3) node [midway,fill=white] {$[3,12]$};

    \node at (minusalpha12) {};
    \end{tikzpicture}  
    \end{tabular}
        
    \caption{Ordered partition labeling of the cells of the simplicial cone decomposition determined by the braid arrangement $\HH_2$.} 
    \label{fig_braid_arrangement_cells}
\end{figure}
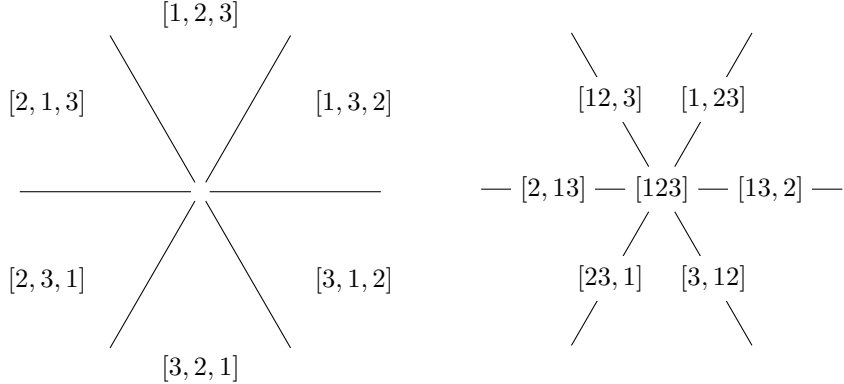

The braid arrangement $\HH_d$ is closely related to a polytope called the permutahedron.
The \defn{permutahedron} $\perm{d}$ is the convex hull of all permutations of $[d+1]$:
\[
\perm{d} =
\conv \{
(\sigma(1),\dots,\sigma({d+1})): \text{ for } \sigma \in \mathfrak S_{d+1} 
\} \subseteq \R^{d+1}.
\]
It is a $d$-dimensional polytope and has $(d+1)!$ vertices.
The facet description of the permutahedron $\perm{d}$ can be given as the subset of the affine hyperplane in~$\R^{d+1}$ determined by 
\[
x_1+\dots +x_{d+1}= 1+\dots +(d+1),
\]
that satisfies the inequalities
\begin{align}\label{eq_facets_permutahedron}
\sum_{a\in A} x_a \geq 1+\dots+|A|,    
\end{align}
for all non-empty proper subsets $A\subset [d+1]$. 

And example of the permutahedron $\perm{2}$ is illustrated in~\Cref{fig_perm_2D}. 
The vertex~$(3,1,2)$ is labeled $312$ for simplicity. 
We use this convention for the coordinates of the vertices in examples and figures throughout the paper. 
When needed, we may use round parenthesis $(\cdot)$ to represent coordinates, and square brackets $[\cdot]$ to represent ordered partitions and permutations in one-line notation, to avoid confusion.

\begin{figure}[htb]
\centering
\begin{tikzpicture}%
	[x={(240:2cm)},
	y={(0:2cm)},
	z={(120:2cm)},
    scale=0.35]

    \coordinate (0) at (0,0,0);
    \coordinate (w1) at (3,0,0);    
    \coordinate (w2) at (0,3,0);    
    \coordinate (w3) at (0,0,3);    
    \coordinate (-w1) at (-3,0,0);
    \coordinate (-w2) at (0,-3,0);    
    \coordinate (-w3) at (0,0,-3);   
    
    \coordinate (123) at (1,2,3);
    \coordinate (213) at (2,1,3);
    \coordinate (312) at (3,1,2);
    \coordinate (321) at (3,2,1);
    \coordinate (231) at (2,3,1);
    \coordinate (132) at (1,3,2);
    
    \node (n123) at (1,2,3) {123};
    \node (n213) at (2,1,3) {213};
    \node (n312) at (3,1,2) {312};
    \node (n321) at (3,2,1) {321};
    \node (n231) at (2,3,1) {231};
    \node (n132) at (1,3,2) {132};
    
    \draw (n123) -- (n213) -- (n312) -- (n321) -- (n231) -- (n132) -- (n123);

         \draw[dashed] (0) -- (w1);
         \draw[dashed] (0) -- (w2);
         \draw[dashed] (0) -- (w3);
         \draw[dashed] (0) -- (-w1);
         \draw[dashed] (0) -- (-w2);
         \draw[dashed] (0) -- (-w3);
         
\end{tikzpicture}    
\begin{tikzpicture}%
	[x={(240:2cm)},
	y={(0:2cm)},
	z={(120:2cm)},
    scale=0.35]

\coordinate (0) at (0,0,0);
\coordinate (w1) at (3,0,0);    
\coordinate (w2) at (0,3,0);    
\coordinate (w3) at (0,0,3);    
\coordinate (-w1) at (-3,0,0);
\coordinate (-w2) at (0,-3,0);    
\coordinate (-w3) at (0,0,-3);    

\coordinate (123) at (1,2,3);
\coordinate (213) at (2,1,3);
\coordinate (312) at (3,1,2);
\coordinate (321) at (3,2,1);
\coordinate (231) at (2,3,1);
\coordinate (132) at (1,3,2);

\node (n123) at (1,2,3) {$[1,2,3]$};
\node (n213) at (2,1,3) {$[2,1,3]$};
\node (n312) at (3,1,2) {$[2,3,1]$};
\node (n321) at (3,2,1) {$[3,2,1]$};
\node (n231) at (2,3,1) {$[3,1,2]$};
\node (n132) at (1,3,2) {$[1,3,2]$};

\draw (n123) -- (n213) -- (n312) -- (n321) -- (n231) -- (n132) -- (n123);

\node at (w1) {};
\end{tikzpicture}   
\begin{tikzpicture}%
	[x={(240:2cm)},
	y={(0:2cm)},
	z={(120:2cm)},
    scale=0.35]

\coordinate (0) at (0,0,0);
\coordinate (w1) at (3,0,0);    
\coordinate (w2) at (0,3,0);    
\coordinate (w3) at (0,0,3);    
\coordinate (-w1) at (-3,0,0);
\coordinate (-w2) at (0,-3,0);    
\coordinate (-w3) at (0,0,-3);    

\coordinate (123) at (1,2,3);
\coordinate (213) at (2,1,3);
\coordinate (312) at (3,1,2);
\coordinate (321) at (3,2,1);
\coordinate (231) at (2,3,1);
\coordinate (132) at (1,3,2);

\draw (123) -- (213) -- (312) -- (321) -- (231) -- (132) -- (123);

\node at ($0.75*(w1)$) {$[23,1]$};
\node at ($0.75*(w2)$) {$[13,2]$};
\node at ($0.75*(w3)$) {$[12,3]$};

\node at ($0.75*(-w1)$) {$[1,23]$};
\node at ($0.75*(-w2)$) {$[2,13]$};
\node at ($0.75*(-w3)$) {$[3,12]$};

\node at (0,0,0) {$[123]$};

\node at (w1) {};
\end{tikzpicture}   
    \caption{The permutahedron $\perm{2}$ and its normal fan (left). Its faces labeled by ordered partitions (middle and right).}
    \label{fig_perm_2D}
\end{figure}
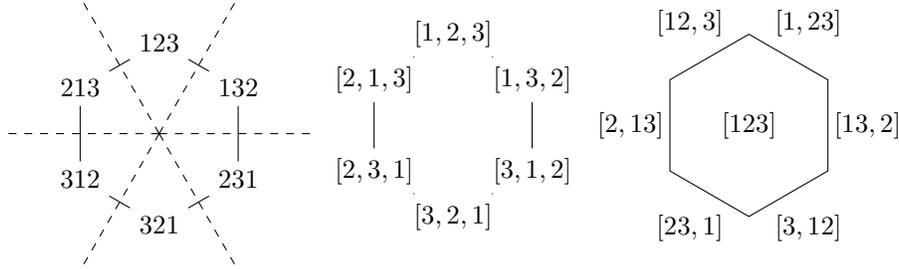

The non-empty faces of $\perm{d}$ are in bijection with ordered partitions of $[d+1]$.  
More precisely, an ordered partition $[B_1,\dots ,B_k]\subset [d+1]$ is the face of~$\perm{d}$ that satisfies 
\[
\sum_{a\in B_i} x_a = (b_{i-1}+1) + (b_{i-1}+2)+\dots + b_i, 
\]
for $1\leq i \leq k$, where $b_i:=|B_1|+\dots |B_i|$ and $b_0:=0$. 
Equivalently, $[B_1,\dots ,B_k]$ denotes the face given by the convex hull of all permutations $(x_1,\dots x_{d+1})$ of $[d+1]$ such that the set $\{x_a\}_{a\in B_i}=\{b_{i-1}+1,\dots , b_i\}$.

The ordered partition consisting of only one block corresponds to the full dimensional face (the polytope $\perm{d}$ itself). An ordered partition $[A,B]$ consisting of two blocks corresponds to the facet determined by~\eqref{eq_facets_permutahedron}. 
The ordered partitions whose blocks consist of singleton elements can be thought as permutations in one line notation, and correspond to the vertices of $\perm{d}$ whose coordinates are given by the inverse permutation: 

\smallskip
\begin{center}
\begin{tabular}{ccc}
    Ordered partition & $\longleftrightarrow$ & Vertex of $\perm{d}$\\
    $[\sigma(1),\dots,\sigma(d+1)]$ & $\longleftrightarrow$ & $(\sigma^{-1}(1),\dots , \sigma^{-1}(d+1))$
\end{tabular}
\smallskip
\end{center}
For instance, $[2,3,1]$ corresponds to the vertex $(3,1,2)$ determined by 
\[
x_2=1, \quad x_3=2, \quad x_1=3, 
\]
and $[23,1]$ corresponds to the facet satisfying 
\[
x_2+x_3=1+2, \quad x_1=3.
\]
This facet, an edge in this case, has two vertices $(3,1,2)$ and $(3,2,1)$, which are labeled by their inverse permutations $[2,3,1]$ and $[3,2,1]$ respectively. 

A face labeled by an ordered partition $\B =[B_1,\dots, B_k]$ is contained in a face labeled by an ordered partition $\B' =[B_1',\dots, B_\ell']$ if and only if~$\B$ \defn{refines} $\B'$.
For instance, the vertices $[2,3,1]$ and $[3,2,1]$ are contained in the edge $[23,1]$ because the block $23$ is the union of the first two blocks in both cases.

As a consequence, 
the normal fan of $\perm{d}$ is the simplicial cone decomposition determined by~$\HH_d$, and the face of $\perm{d}$ labeled by an ordered partition $[B_1,\dots , B_k]$ is dual to the cone with the same labeling. 
We also say the $\perm{d}$ is \defn{dual} to the simplicial cone decomposition determined by~$\HH_d$, in the sense that their posets of faces are dual to each other. 

Combinatorially, there is a small difference on how to describe the edges of the permutahedron depending on whether we use the coordinates of the vertices or the permutation of the corresponding ordered partition. 
In terms of vertex coordinates, two vertices of $\perm{d}$ are connected by an edge if they can be obtained from each other by swapping two consecutive \emph{values}:
\[
(\dots, k ,\dots , k+1,\dots ) \longleftrightarrow (\dots, k+1 ,\dots , k,\dots ),
\]
or equivalently, if the vector connecting one vertex to the other is of the form $e_j-e_i$ for some $i\neq j$.

In terms of ordered partitions, two permutations are connected by and edge in $\perm{d}$ if they can be obtained from each other by swapping two consecutive \emph{positions}:
\[
[\dots, i,j,\dots] \longleftrightarrow [\dots, j,i,\dots].
\]

\subsection{The affine arrangement and the permutahedral tiling}\label{sec_affineArrangement_permutahedralTiling}
Now, we consider the collection of affine hyperplanes 
\[
H^k_{ij}= 
\{
\x\in \R^{d+1} :\ x_j-x_i=k
\} 
\]
for $1\leq i < j \leq d+1$ and $k\in \Z$. It contains the hyperplanes $H^0_{ij}=H_{ij}$ of the braid arrangement and all their ``integer translates". 

The \defn{affine Coxeter arrangement} $\HHaffine_d$ of type $\widetilde A_d$ is the restriction of this arrangement to the hyperplane $V$.
For $d=2$, this is just the arrangement of affine hyperplanes of the triangular lattice, which is illustrated on the left of~\Cref{fig_triangularlattice_hexagonallattice}.

\begin{figure}[htb]
\centering
\begin{tabular}{cc}
    \begin{tikzpicture}%
    	[x={(240:2cm)},
    	y={(0:2cm)},
    	z={(120:2cm)},
        scale=0.15]
    
    \coordinate (0) at (0,0,0);
    \coordinate (w1) at (3,0,0);    
    \coordinate (w2) at (0,3,0);    
    \coordinate (w3) at (0,0,3);    
    \coordinate (-w1) at (-3,0,0);
    \coordinate (-w2) at (0,-3,0);    
    \coordinate (-w3) at (0,0,-3);    
    
    \def\boundaryvertices{
    3/0/3 /0/0/3,
    3/0/2 /0/1/3,
    3/0/1 /0/2/3,
    3/0/0 /0/3/3,
    3/1/0 /0/4/3,
    3/2/0 /0/4/2,
    3/3/0 /0/4/1,
    3/4/0 /0/4/0,
    3/0/3 /3/0/0,
    2/0/3 /3/1/0,
    1/0/3 /3/2/0,
    0/0/3 /3/3/0,
    0/1/3 /3/4/0,
    0/2/3 /2/4/0,
    0/3/3 /1/4/0,
    0/4/3 /0/4/0,
    3/0/0 /3/4/0,
    3/0/1 /2/4/0,
    3/0/2 /1/4/0,
    3/0/3 /0/4/0,
    2/0/3 /0/4/1,
    1/0/3 /0/4/2,
    0/0/3 /0/4/3
    }
    
        \foreach \a/\b/\c /\d/\e/\f in \boundaryvertices{
            \draw[blue] ($\a*(w1)+\b*(w2)+\c*(w3)$) -- ($\d*(w1)+\e*(w2)+\f*(w3)$);
            }
    \draw[ultra thick] ($3*(w1)+0*(w2)+0*(w3)$) -- ($0*(w1)+3*(w2)+3*(w3)$);
    \draw[ultra thick] ($3*(w1)+0*(w2)+3*(w3)$) -- ($0*(w1)+4*(w2)+0*(w3)$);
    \draw[ultra thick] ($0*(w1)+0*(w2)+3*(w3)$) -- ($3*(w1)+3*(w2)+0*(w3)$);
    
    \draw[ultra thick, blue] (w1) -- (-w2) -- (w3) -- (-w1) -- (w2) -- (-w3) -- cycle;
    \node[fill=white, draw=black, circle, inner sep=0.5pt] at (w1) {$\widetilde w_1$};
    \node[fill=white, draw=black, circle, inner sep=0.5pt] at (w2) {$\widetilde w_2$};
    \node[fill=white, draw=black, circle, inner sep=0.5pt] at (w3) {$\widetilde w_3$};
    \end{tikzpicture} 
    &
    \begin{tikzpicture}%
    	[x={(240:2cm)},
    	y={(0:2cm)},
    	z={(120:2cm)},
        scale=0.15]

    \coordinate (w1) at (3,0,0);    
    \coordinate (w2) at (0,3,0);    
    \coordinate (w3) at (0,0,3);    
        
    \def\tilepositions{
    0/0/0 ,
    1/0/0 ,
    2/0/0 ,
    0/1/0 ,
    1/1/0 ,
    2/1/0 ,
    0/2/0 ,
    1/2/0 ,
    2/2/0 ,
    0/3/0 ,
    1/3/0 ,
    2/3/0 ,
    0/0/1 ,    
    0/1/1 ,    
    0/2/1 ,    
    0/3/1 ,    
    0/0/2 ,    
    0/1/2 ,    
    0/2/2 ,    
    0/3/2 ,
    1/0/1 ,
    2/0/1 ,
    1/0/2 ,
    2/0/2 
    }

    \foreach \a/\b/\c in \tilepositions{
        \begin{scope}[shift={($\a*(3,0,0)+\b*(0,3,0)+\c*(0,0,3)$)}]
            \draw[blue,fill=brown!15] (1,2,3) -- (2,1,3) -- (3,1,2) -- (3,2,1) -- (2,3,1) -- (1,3,2) -- (1,2,3);      
        \end{scope}
        }            

        \draw[ultra thick,blue] (1,2,3) -- (2,1,3) -- (3,1,2) -- (3,2,1) -- (2,3,1) -- (1,3,2) -- (1,2,3);      
    
    \node (n0) at (0,0,0) {$0$};
    \node (n1) at (w1) {$w_1$};
    \node (n2) at (w2) {$w_2$};
    \node (n3) at (w3) {$w_3$};
    \draw[dashed,->] (n0) -- (n1);
    \draw[dashed,->] (n0) -- (n2);
    \draw[dashed,->] (n0) -- (n3);   

    \node at ($3*(w1)$) {};
    
    \end{tikzpicture}   
\end{tabular}
    \caption{A finite piece of the simplicial complex $\complexAffine{2}$ determined by the affine Coxeter arrangement of type $\widetilde A_2$ (left). A finite piece of its dual tiling of space by permutahedra $\permutahedralTiling{2}$ (right).}
    \label{fig_triangularlattice_hexagonallattice}
\end{figure}

In general, the arrangement $\HHaffine_d$ decomposes the space $V$ into an infinite number of simplices, giving rise to an infinite simplicial complex that we denote by $\complexAffine{d}$. The vertices of this complex are the elements of
\begin{align}\label{eq_lattice_affine_one}
\latticeAffine{d} :=
\{
\x\in V :\ x_j-x_i \in \Z, \text{ for all } 1\leq i < j \leq d+1 
\}.    
\end{align}
This set is a lattice, which is known as the \defn{weight lattice} of type $A_d$. It is generated by integer linear combinations of the vectors $\widetilde w_1, \dots ,\widetilde w_{d+1}$ given by
\begin{align}
    (d+1) \widetilde w_i = (d+1) e_i - \sum_{j=1}^{d+1} e_j,
\end{align}
which satisfy the relation
\begin{align}
\widetilde w_1 + \dots + \widetilde w_{d+1} = 0.    
\end{align}
The weight lattice is isomorphic to the quotient 
\begin{align}
    \latticeAffine{d} \cong \Z^{d+1}/ (1,\dots,1).
\end{align}

For instance, for $d=2$ the vertices of the complex $\complexAffine{2}$ are the vertices of the triangular tiling, and are spanned by the vectors
\begin{align*}
    \widetilde w_1 &= \frac{1}{3} (2,-1,-1) \\
    \widetilde w_2 &= \frac{1}{3} (-1,2,-1) \\
    \widetilde w_3 &= \frac{1}{3} (-1,-1,2)
\end{align*}
subject to the relation $\widetilde w_1+\widetilde w_2+\widetilde w_3=0$, see~\Cref{fig_triangularlattice_hexagonallattice} (left).

For the case $d=2$, the dual of the triangular lattice is the hexagonal lattice, which is illustrated on the right of~\Cref{fig_triangularlattice_hexagonallattice}.
This generalizes to higher dimensions, where the dual of the complex $\complexAffine{d}$ is a combinatorial structure known as the permutahedral tiling. 
We will next describe this tilining geometrically.

In order to define the permutahedral tiling, we denote by 
\begin{align}\label{eq_lattice_two}
\lattice{d}:=(d+1)\latticeAffine{d}    
=
\{
\x\in V :\ x_j-x_i \in (d+1)\Z, \text{ for all } 1\leq i < j \leq d+1 
\}
\end{align}
the $(d+1)$-dilation of the lattice $\latticeAffine{d}$. It is generated by integer linear combinations of $w_1,\dots,w_{d+1}$, where 
\begin{align}
    w_i := (d+1)\widetilde w_i = (-1,\dots, -1, d, -1, \dots, -1)
\end{align}
is the vector with $i$th coordinate equal to $d$ and all other coordinates equal to $-1$. These vectors also satisfy the relation
\begin{align}
    w_1+ \dots + w_{d+1}=0.
\end{align}

The lattice $\lattice{d}$ is the same lattice we defined before in Equation~\eqref{eq_lattice_one}, and the vectors $w_i$ are the same vectors we defined in Equation~\eqref{eq_vector_wi}; see Section~\ref{sec_heawood_graph}.

The \defn{permutahedral tiling} $\permutahedralTiling{d}$ is the infinite tiling of the affine subspace
\begin{align}
\{
\x\in \R^{d+1}:\ x_1+\dots+x_{d+1}=1+\dots+(d+1)
\}    
\end{align}
whose tiles are translates
\begin{align}
    \perm{d} + v 
\end{align}
 of the permutahedron, for $v\in \lattice{d}$.
 An example for $d=2$ of this translation is illustrated on~\Cref{fig_perm_translation_2D}. A bigger portion of the tiling with more tiles is shown on the right of~\Cref{fig_triangularlattice_hexagonallattice}.
 In~\Cref{fig_permutahedron3D}, we illustrate the $d=3$ dimensional permutahedron together with the four translation directions $w_1,w_2,w_3,w_4$.

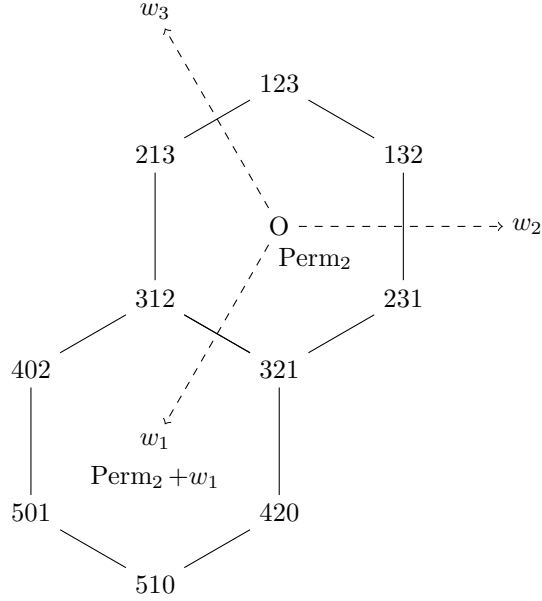
\begin{figure}[htb]
    \centering
\begin{tikzpicture}%
	[x={(240:2cm)},
	y={(0:2cm)},
	z={(120:2cm)},
    scale=0.55]

\coordinate (w1) at (3,0,0);    
\coordinate (w2) at (0,3,0);    
\coordinate (w3) at (0,0,3);    

\coordinate (123) at (1,2,3);
\coordinate (213) at (2,1,3);
\coordinate (312) at (3,1,2);
\coordinate (321) at (3,2,1);
\coordinate (231) at (2,3,1);
\coordinate (132) at (1,3,2);

\node (n123) at (1,2,3) {123};
\node (n213) at (2,1,3) {213};
\node (n312) at (3,1,2) {312};
\node (n321) at (3,2,1) {321};
\node (n231) at (2,3,1) {231};
\node (n132) at (1,3,2) {132};

\draw (n123) -- (n213) -- (n312) -- (n321) -- (n231) -- (n132) -- (n123);

\node (o) at (0,0,0) {O};
\node (n1) at (w1) {$w_1$};
\node (n2) at (w2) {$w_2$};
\node (n3) at (w3) {$w_3$};
\draw[dashed,->] (o) -- (n1);
\draw[dashed,->] (o) -- (n2);
\draw[dashed,->] (o) -- (n3);

\coordinate (123w1) at ($(123)+(w1)$);
\coordinate (213w1) at ($(213)+(w1)$);
\coordinate (312w1) at ($(312)+(w1)$);
\coordinate (321w1) at ($(321)+(w1)$);
\coordinate (231w1) at ($(231)+(w1)$);
\coordinate (132w1) at ($(132)+(w1)$);

\node (n213w1) at (213w1) {402};
\node (n312w1) at (312w1) {501};
\node (n321w1) at (321w1) {510};
\node (n231w1) at (231w1) {420};
\draw (n312) -- (n213w1) -- (n312w1) -- (n321w1) -- (n231w1) -- (n321) -- (n312);

\node at ($0.15*(w1)+0.22*(w2)+0*(w3)$) {$\perm{2}$};
\node[anchor=north, outer sep=6pt] at ($1*(w1)+0*(w2)+0*(w3)$) {$\perm{2}+w_1$};

\end{tikzpicture}    
    \caption{Example of two tiles in the permutahedral tiling in dimension 2.}
    \label{fig_perm_translation_2D}
\end{figure}

The poset of non-empty faces of the complex $\complexAffine{d}$ is dual to the poset of non-empty faces of the permutahedral tiling $\permutahedralTiling{d}$. A vertex of $\complexAffine{d}$ given by
\[
v=a_1 \widetilde w_1 + \dots a_{d+1}\widetilde w_{d+1} 
\]
corresponds to the full dimensional cell~$\perm{v}$ of $\permutahedralTiling{d}$, given by
\[
\perm{v}:=
\perm{d} + 
(a_1  w_1 + \dots a_{d+1} w_{d+1} 
)
=
\perm{d}+(d+1)v.
\]
The faces of $\complexAffine{d}$ that contain $v$ are dual to the faces of $\perm{v}$. 
In \Cref{fig_fundamentaltile_translates}, we  label $\perm{v}$ simply by \(v\), omitting parenthesis, omitting commas, and using a line over a number to represent its negative. 
For  example, \(101\) represents \(\perm{1\widetilde w_1+ 0 \widetilde w_2 +1\widetilde w_3}\) and \(3\overline{3}0\) represents \(\perm{3\widetilde w_1+ -3 \widetilde w_2 +0\widetilde w_3}\) .  

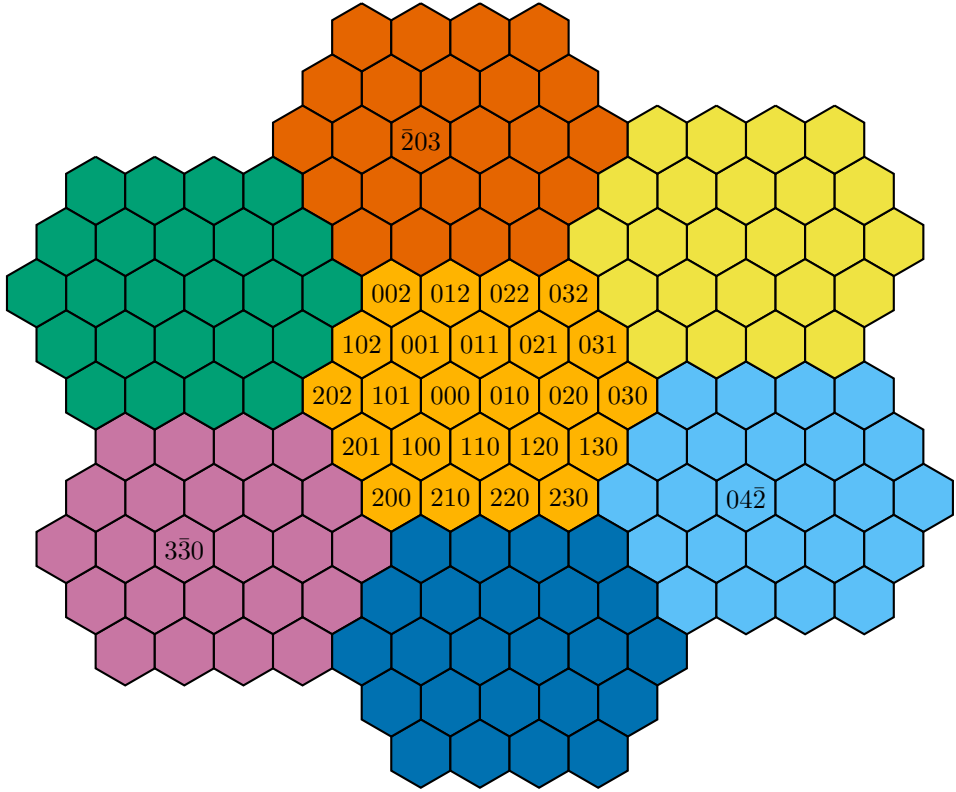
\begin{figure}[htb]
    \centering
        \begin{tikzpicture}%
    	[x={(240:2cm)},
    	y={(0:2cm)},
    	z={(120:2cm)},
        scale=0.131]

    \coordinate (w1) at (3,0,0);    
    \coordinate (w2) at (0,3,0);    
    \coordinate (w3) at (0,0,3);    

    \draw (1,2,3) -- (2,1,3) -- (3,1,2) -- (3,2,1) -- (2,3,1) -- (1,3,2) -- (1,2,3); 

    \def\tilepositions{
    0/0/0,
    1/0/0,
    2/0/0,
    0/1/0,
    1/1/0,
    2/1/0,
    0/2/0,
    1/2/0,
    2/2/0,
    0/3/0,
    1/3/0,
    2/3/0,
    0/0/1,    
    0/1/1,    
    0/2/1,    
    0/3/1,    
    0/0/2,    
    0/1/2,    
    0/2/2,    
    0/3/2,
    1/0/1,
    2/0/1,
    1/0/2,
    2/0/2 
    }

    \foreach \a/\b/\c in \tilepositions{
        \begin{scope}[shift={($\a*(3,0,0)+\b*(0,3,0)+\c*(0,0,3)$)}]
            \draw[thick,draw=black,fill=orange] (1,2,3) -- (2,1,3) -- (3,1,2) -- (3,2,1) -- (2,3,1) -- (1,3,2) -- (1,2,3);      
            \node at (0,0,0) {\a\b\c};
        \end{scope}
        }            

    \def\tileshifts{
    3/-3/0/purple, 0/4/-2/skyblue, -2/0/3/red, 3/1/-2/blue, -2/4/1/yellow, 1/-3/3/bluishgreen}
    
    \foreach \d\e\f\tilecolor in \tileshifts{
        \begin{scope}[shift={($\d*(3,0,0)+\e*(0,3,0)+\f*(0,0,3)$)}]        
        
            \foreach \a/\b/\c in \tilepositions{
                \begin{scope}[shift={($\a*(3,0,0)+\b*(0,3,0)+\c*(0,0,3)$)}]
                    \draw[thick,draw=black,fill=\tilecolor] (1,2,3) -- (2,1,3) -- (3,1,2) -- (3,2,1) -- (2,3,1) -- (1,3,2) -- (1,2,3);      
                \end{scope}
                }            
        \end{scope}
        }
    \node at ($3*(w1)-3*(w2)+0*(w3)$) {$3\bar 3 0$};
    \node at ($0*(w1)+4*(w2)-2*(w3)$) {$0 4\bar 2$};
    \node at ($-2*(w1)+0*(w2)+3*(w3)$) {$\bar 2 0 3$};
    \end{tikzpicture}
    \caption{The Heawood graph $\heawoodGraph{(2,3,2)}$ as a quotient of the graph of the permutahedral tiling in dimension two. 
    Translations of the fundamental tile $\fundamentaltile{(2,3,2)}$ give a tiling of space.}
    \label{fig_fundamentaltile_translates}
\end{figure}

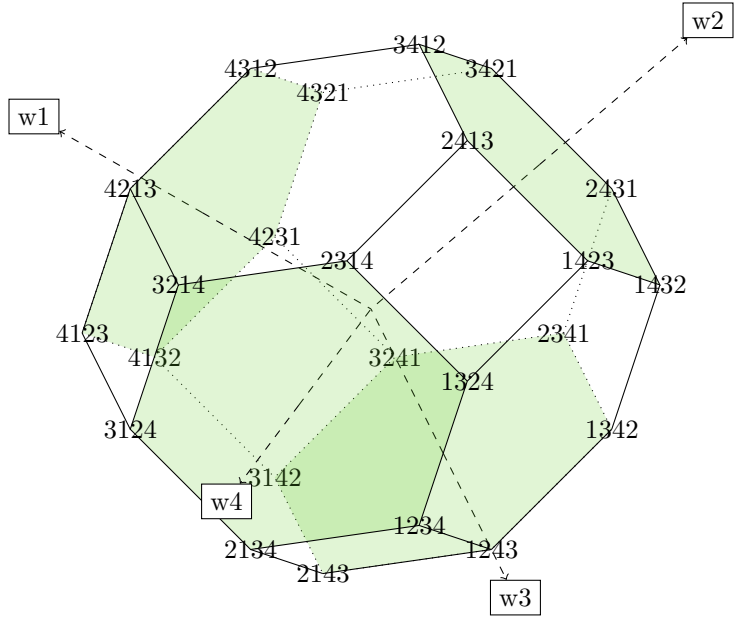
\begin{figure}[htb]
    \centering
\begin{tikzpicture} [scale=0.8]
    \node[coordinate] (p1) at (1,0){};
    \node[coordinate] (p2) at (0,1){};
    \node[coordinate] (p3) at (-0.4,-0.2){};
    \node[coordinate] (x1) at ($(p1)+(p2)+(p3)$){};
    \node[coordinate] (x2) at ($(p1)-(p2)-(p3)$){};
    \node[coordinate] (x3) at ($(p3)-(p1)-(p2)$){};
    \node[coordinate] (x4) at ($(p2)-(p1)-(p3)$){};

    
    \node[coordinate] (1234)  at ($1*(x1)+2*(x2)+3*(x3)+4*(x4)$) {1234};
    \node[coordinate] (1243)  at ($1*(x1)+2*(x2)+4*(x3)+3*(x4)$) {1243};
    \node[coordinate] (2143)  at ($2*(x1)+1*(x2)+4*(x3)+3*(x4)$) {2143};
    \node[coordinate] (2134)  at ($2*(x1)+1*(x2)+3*(x3)+4*(x4)$) {2134};
    \node[coordinate] (2314)  at ($2*(x1)+3*(x2)+1*(x3)+4*(x4)$) {2314};
    \node[coordinate] (2341)  at ($2*(x1)+3*(x2)+4*(x3)+1*(x4)$) {2341};
    \node[coordinate] (2431)  at ($2*(x1)+4*(x2)+3*(x3)+1*(x4)$) {2431};
    \node[coordinate] (2413)  at ($2*(x1)+4*(x2)+1*(x3)+3*(x4)$) {2413};
    \node[coordinate] (4213)  at ($4*(x1)+2*(x2)+1*(x3)+3*(x4)$) {4213};
    \node[coordinate] (4231)  at ($4*(x1)+2*(x2)+3*(x3)+1*(x4)$) {4231};
    \node[coordinate] (4321)  at ($4*(x1)+3*(x2)+2*(x3)+1*(x4)$) {4321};
    \node[coordinate] (4312)  at ($4*(x1)+3*(x2)+1*(x3)+2*(x4)$) {4312};
    \node[coordinate] (4132)  at ($4*(x1)+1*(x2)+3*(x3)+2*(x4)$) {4132};
    \node[coordinate] (4123)  at ($4*(x1)+1*(x2)+2*(x3)+3*(x4)$) {4123};
    \node[coordinate] (1423)  at ($1*(x1)+4*(x2)+2*(x3)+3*(x4)$) {1423};
    \node[coordinate] (1432)  at ($1*(x1)+4*(x2)+3*(x3)+2*(x4)$) {1432};
    \node[coordinate] (1342)  at ($1*(x1)+3*(x2)+4*(x3)+2*(x4)$) {1342};
    \node[coordinate] (1324)  at ($1*(x1)+3*(x2)+2*(x3)+4*(x4)$) {1324};
    \node[coordinate] (3124)  at ($3*(x1)+1*(x2)+2*(x3)+4*(x4)$) {3124};
    \node[coordinate] (3142)  at ($3*(x1)+1*(x2)+4*(x3)+2*(x4)$) {3142};
    \node[coordinate] (3412)  at ($3*(x1)+4*(x2)+1*(x3)+2*(x4)$) {3412};
    \node[coordinate] (3421)  at ($3*(x1)+4*(x2)+2*(x3)+1*(x4)$) {3421};
    \node[coordinate] (3241)  at ($3*(x1)+2*(x2)+4*(x3)+1*(x4)$) {3241};
    \node[coordinate] (3214)  at ($3*(x1)+2*(x2)+1*(x3)+4*(x4)$) {3214};

    \def\backedges{ (4123) / (4213) , (4321) / (4312) , (4132) / (4231) , (3142) / (3241)  }

    \def\frontedges{(2134) / (1234) , (2143) / (1243) , (2341) / (1342) , (2431) / (1432) , (3421) / (3412) , (3124) / (3214) , (2314) / (1324) , (2413) / (1423) , (2143) / (2134) , (2134) / (3124) , (2341) / (2431) , (2431) / (3421)}

\tikzset{vertex/.style={}}

    \node[fill=purple!0!white,draw=black] (w1) at ($3.5*(x1)+-0.5*(x2)+3.5*(x3)+3.5*(x4)$) {w1};
    \node[fill=purple!0!white,draw=black] (w3) at ($3.5*(x1)+3.5*(x2)+3.5*(x3)+-0.5*(x4)$) {w3};
    \node[coordinate] (1111) at ($2.5*(x1)+2.5*(x2)+2.5*(x3)+2.5*(x4)$) {};
    \node[coordinate] (hw1) at ($3*(x1)+1*(x2)+3*(x3)+3*(x4)$) {};
    \node[coordinate] (hw2) at ($3*(x1)+3*(x2)+1*(x3)+3*(x4)$) {};
    \node[coordinate] (hw3) at ($3*(x1)+3*(x2)+3*(x3)+1*(x4)$) {};
    \node[coordinate] (hw4) at ($1*(x1)+3*(x2)+3*(x3)+3*(x4)$) {};

    \draw[dashed,->] (hw1) -- (w1);
    \draw[dashed,->] (hw3) -- (w3);
    \draw[fill=green,fill opacity=0.2,dotted] (4321) -- (4231) -- (3241) -- (2341) -- (2431) -- (3421) -- (4321);
    \draw[fill=green,fill opacity=0.2,dotted] (2134) -- (2143) -- (3142) -- (4132) -- (4123) -- (3124) -- (2134);
    \draw[dashed] (hw3) -- (1111);
    \draw[dashed] (hw1) -- (1111);
    \draw[dashed] (hw2) -- (1111);
    \draw[dashed] (hw4) -- (1111);

    \foreach \x/\y in \backedges{
       \draw[dotted] \x -- \y;
    }
    
    \foreach \x/\y in \frontedges{
       \draw \x -- \y;
    }
\node[vertex] at (3142){4132};
\node[vertex] at (4132){4231};
\node[vertex] at (3241){3142};
\node[vertex] at (4231){3241};
\node[vertex] at (2134){4213};
\node[vertex] at (2143){4123};
\node[vertex] at (3124){4312};
\node[vertex] at (4123){4321};
\node[vertex] at (2341){2143};
\node[vertex] at (2431){1243};
\node[vertex] at (3421){1342};
\node[vertex] at (4321){2341};

    \draw[fill=green,fill opacity=0.2] (1234) -- (1243) -- (1342) -- (1432) -- (1423) -- (1324) -- (1234);
    \draw[fill=green,fill opacity=0.2] (4312) -- (4213) -- (3214) -- (2314) -- (2413) -- (3412) -- (4312);
    \node[fill=purple!0!white,draw=black] (w2) at ($3.5*(x1)+3.5*(x2)+-0.5*(x3)+3.5*(x4)$) {w2};
    \node[fill=purple!0!white,draw=black] (w4) at ($-0.5*(x1)+3.5*(x2)+3.5*(x3)+3.5*(x4)$) {w4};
    \draw[dashed,->] (hw2) -- (w2);
    \draw[dashed,->] (hw4) -- (w4);
    
\node[vertex] at (1234){3214};
\node[vertex] at (1243){3124};
\node[vertex] at (1324){2314};
\node[vertex] at (1342){2134};
\node[vertex] at (1423){1324};
\node[vertex] at (1432){1234};

\node[vertex] at (2314){2413};
\node[vertex] at (2413){1423};
\node[vertex] at (3214){3412};
\node[vertex] at (3412){1432};
\node[vertex] at (4213){3421};
\node[vertex] at (4312){2431};

\end{tikzpicture}
    \caption{The 3-dimensional permutahedron.}
    \label{fig_permutahedron3D}
\end{figure}

\begin{figure}[htb]
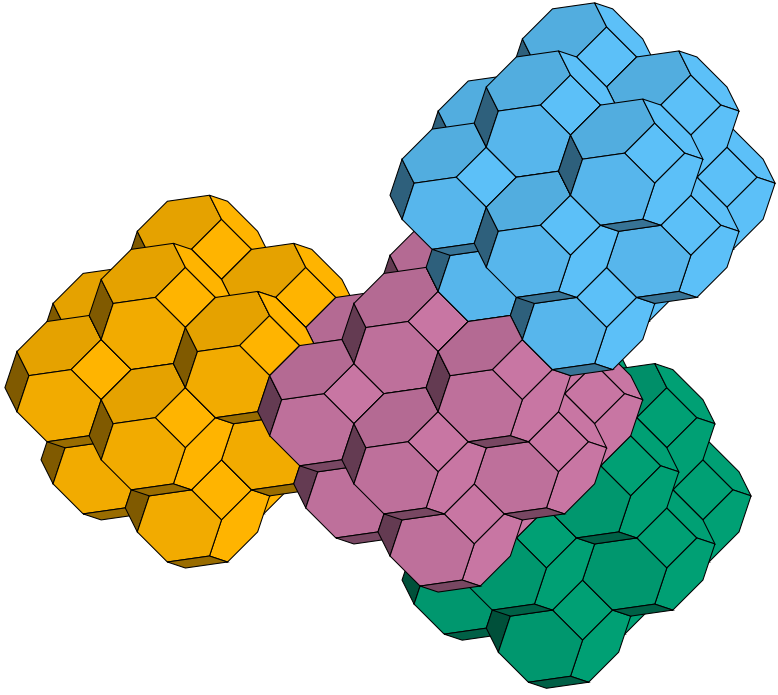

\centering
 \include{figures/figureFour1111}
\caption{The fundamental tile  \(\fundamentaltile{(1,1,1,1)}\), and three translations of it with offsets $2w_1-w_2$, $2w_2-w_3$, and $2w_3-w_4$.}
\label{fig_figureFour1111}
\end{figure}

\begin{figure}[htb]
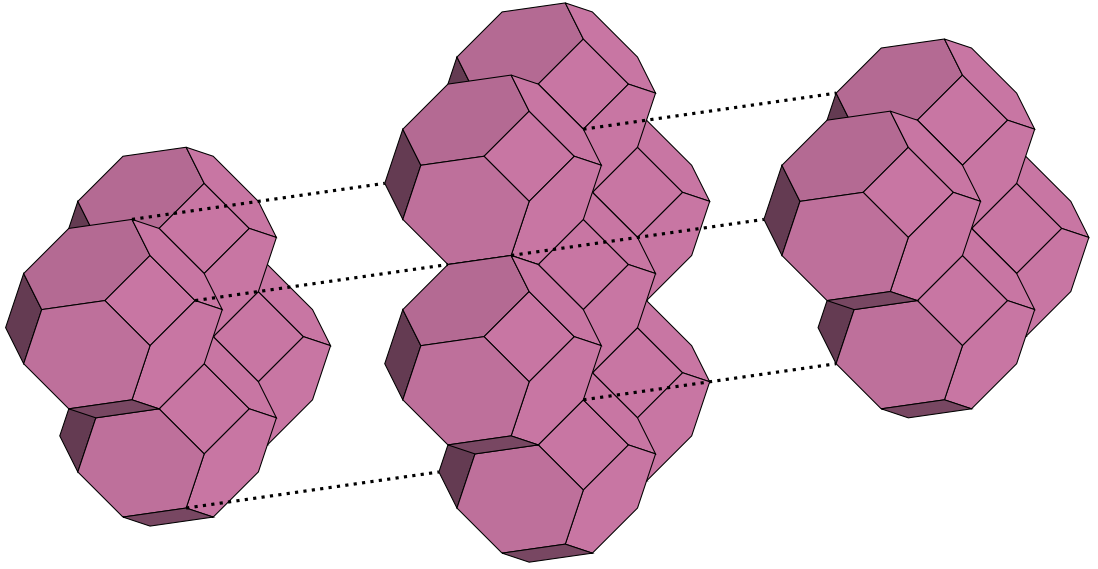

    \centering
 \include{figures/figure1111expand}
    \caption{The fundamental tile \(\fundamentaltile{(1,1,1,1)}\), expanded so that the interior is visible.}
    \label{fig_figure1111expand}
\end{figure}

\begin{figure}[htb]
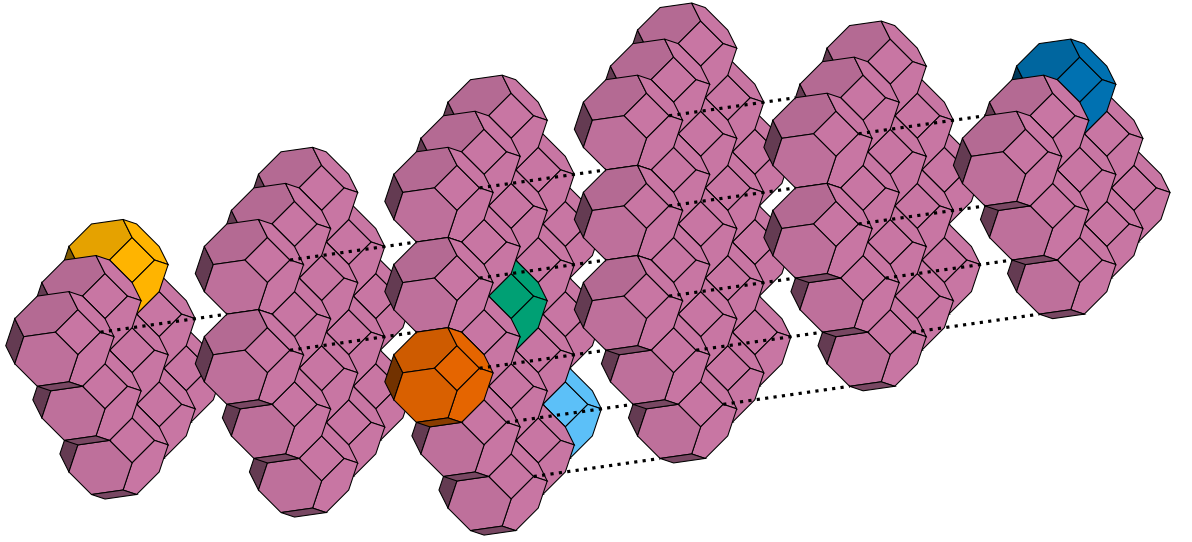

    \centering
 \include{figures/figure2223expand}
    \caption{The fundamental tile \(\fundamentaltile{(2,2,2,3)}\), expanded so that the interior is visible. The differently colored permutahedra indicate the \(w_i\) directions.}
    \label{fig_figure2223expand}
\end{figure}

\section{The triangulated torus and the Heawood complex}
Using the machinery in the previous section, we can now proceed to introduce the triangulated torus stated in~\Cref{thm_maintheorem}, as a quotient of the infinite simplicial complex~$\complexAffine{d}$. 
Its dual is a polytopal complex that we call the Heawood complex, which can be obtained as a quotient of the infinite complex of faces of the permutahedral tiling~$\permutahedralTiling{d}$. 

\subsection{The triangulated torus}
As above, let $\kk = (k_1,k_2,\ldots,k_{d+1}) \in \N^{d+1}$ be a sequence of positive integers for some $d\geq 1$, and $M_\kk$ be the matrix in Equation~\eqref{eq_matrix}. 
Consider the lattice of integer linear combinations  
\begin{align}\label{eq_lattice_affine_two}
\latticeAffine{d} := 
\{
a_1\widetilde w_1+\dots + a_{d+1} \widetilde w_{d+1} :\ a_1,\dots,a_{d+1}\in 
\Z^{d+1}\}.
\end{align}
This is the weight lattice from Equation~\eqref{eq_lattice_affine_one}, and its elements are the vertices of the infinite simplicial complex $\complexAffine{d}$. 

We define $\sublatticeAffine{\kk}\subset \latticeAffine{d}$ as the \defn{sublattice}
\begin{align}\label{eq_sublatticeAffine}
\sublatticeAffine{\kk} := 
\left\{ 
a_1\widetilde w_1+\dots + a_{d+1}\widetilde w_{d+1} :\   
\begin{array}{l}
    (a_1,\dots,a_{d+1}) = (b_1,\dots,b_{d+1}) M_{\kk}  \\ 
    \text{for some } b_1,\dots,b_{d+1}\in \Z
\end{array}
\right\}.
\end{align}
Its elements are integer linear combinations of the vectors $\widetilde w_1,\dots,\widetilde w_{d+1}$, whose vector of coefficients is an integer linear combination of the rows of the matrix $M_\kk$. 

We say that two faces $F,F'\in \complexAffine{d}$ are \defn{$\kk$-equivalent}, and write \mbox{$F\equivalent{\kk} F'$}, if 
\begin{align}
F'=F+v \text{ for some } v\in \sublatticeAffine{\kk}.
\end{align}
That is, the face $F'$ can be obtained by translating the face $F$ by a vector $v\in \sublatticeAffine{d}$. 

\begin{definition}[The torus]
\label{def_torus}
Let $\kk=(k_1,\dots,k_{d+1})\in \N^{d+1}$ be a sequence of positive integers for some $d\geq 2$. The \defn{quotient complex} 
\[
\torus{\kk}= \complexAffine{d} / \sublatticeAffine{\kk}
\]
is the simplicial complex of $\kk$-equivalent classes of faces of $\complexAffine{d}$. 
In other words,~$\torus{\kk}$ is the simplicial complex obtained by identifying faces of $\complexAffine{d}$ up to translation by vectors in $\sublatticeAffine{\kk}$.   
A face class is contained in another if there is an element representative of the first that is contained in an element representative of the second.
\end{definition}

\begin{definition}(The fundamental vectors)
\label{def_fundamentalvectors}
For $\kk=(k_1,\dots,k_{d+1})\in \N^{d+1}$ we define the \defn{fundamental vectors} with respect to $\kk$ as the elements of the set
\begin{align}\label{eq_fundamentalvectorsAffine}
\fundamentalvectorsAffine{\kk} = 
\left\{a_1\widetilde w_1+\dots + a_{d+1} \widetilde w_{d+1} :
\begin{array}{l}
    \ 0 \leq a_i \leq k_i\in \Z^{d+1}  \\ 
    \text{at least one } a_i = 0
\end{array}
\right\}.
\end{align}
\end{definition}

\begin{lemma}\label{lem_quotient_size_affine}
The quotient $\latticeAffine{d}/\sublatticeAffine{\kk}$ is finite. Its number of elements is
\[
|\latticeAffine{d}/\sublatticeAffine{\kk}| =
\det M_\kk
= \prod (k_i+1) -\prod{k_i}
. 
\]
The fundamental vectors in $\fundamentalvectorsAffine{\kk}$ are element representatives of the classes of $\latticeAffine{d}/\sublatticeAffine{\kk}$. 
\end{lemma}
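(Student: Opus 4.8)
The plan is to pass to the presentation $\latticeAffine{d}\cong \Z^{d+1}/\Z\mathbf 1$ (with $\mathbf 1=(1,\dots,1)$) recorded above, under which $\sublatticeAffine{\kk}$ is exactly the image of the row lattice $R:=\Zrowspan{M_\kk}\subseteq \Z^{d+1}$, so that $\latticeAffine{d}/\sublatticeAffine{\kk}\cong \Z^{d+1}/(R+\Z\mathbf 1)$. The first key observation is that the row sums of $M_\kk$ are all equal to $1$: in every column the only two nonzero entries are $k_i+1$ and $-k_i$, which cancel to $1$. Hence $\mathbf 1$ is the sum of all rows of $M_\kk$, so $\mathbf 1\in R$, giving $R+\Z\mathbf 1=R$ and therefore $\latticeAffine{d}/\sublatticeAffine{\kk}\cong \Z^{d+1}/R$. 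Since $\det M_\kk=\prod(k_i+1)-\prod k_i\neq 0$ (a cofactor expansion along the first column of the almost-bidiagonal matrix $M_\kk$, using that deleting row/column $1$ leaves a triangular minor), the lattice $R$ has full rank and index $\lvert\det M_\kk\rvert=D_\kk$, which yields the cardinality formula and in particular finiteness.

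For the representatives I would first count: among the tuples $(a_1,\dots,a_{d+1})$ with $0\le a_i\le k_i$, there are $\prod(k_i+1)$ in total and $\prod k_i$ with every $a_i\ge 1$, so exactly $D_\kk$ of them have some $a_i=0$; these index $\fundamentalvectorsAffine{\kk}$. A short check using $\ker(\Z^{d+1}\to\latticeAffine{d})=\Z\mathbf 1$ shows that two distinct such tuples give distinct elements of $\latticeAffine{d}$ (a difference in $\Z\mathbf 1$ would shift all coordinates equally, destroying a zero entry on one side). Thus $\lvert\fundamentalvectorsAffine{\kk}\rvert=D_\kk=\lvert\latticeAffine{d}/\sublatticeAffine{\kk}\rvert$, and it suffices to prove that the fundamental vectors are pairwise inequivalent; equality of cardinalities then upgrades injectivity into the quotient to the desired complete set of representatives.

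The heart of the proof is this inequivalence. Two fundamental tuples $a,a'$ are equivalent precisely when $a-a'\in R+\Z\mathbf 1=R$, i.e. $a-a'=b\,M_\kk$ for a unique (since $M_\kk$ is nonsingular) integer vector $b=(b_1,\dots,b_{d+1})$. Writing $n:=a-a'=b\,M_\kk$ out column by column and setting $b_0:=b_{d+1}$ cyclically produces the uniform relation $n_j=b_j+k_j(b_j-b_{j-1})$, while $0\le a_j,a_j'\le k_j$ forces $\lvert n_j\rvert\le k_j$. I would then run an extremal argument: assuming the $b_j$ are not all equal, the maximizers form a proper cyclic arc, so there is an index $j^\ast$ with $b_{j^\ast}$ maximal but $b_{j^\ast-1}<b_{j^\ast}$; the relation together with $n_{j^\ast}\le k_{j^\ast}$ forces $b_{j^\ast}\le k_{j^\ast}\bigl(1-(b_{j^\ast}-b_{j^\ast-1})\bigr)\le 0$, and the symmetric argument at a strict minimum forces that value to be $\ge 0$. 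These two conclusions contradict $\min_j b_j\le\max_j b_j$, so all $b_j$ equal a constant $b$ and $n=b\mathbf 1$; finally the defining property that each of $a,a'$ has a zero coordinate forces $b=0$, hence $a=a'$. I expect this cyclic extremal step to be the main obstacle, since the local bounds $\lvert n_j\rvert\le k_j$ only constrain things once propagated around the full index cycle.
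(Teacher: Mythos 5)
Your proposal is correct, and while its second half coincides with the paper's argument, the first half takes a genuinely different and cleaner route. The inequivalence step is essentially the paper's: you derive the same cyclic identity $n_j=b_j+k_j(b_j-b_{j-1})$ (with $b_0:=b_{d+1}$), the same bound $|n_j|\le k_j$, and run the same extremal reasoning --- the paper normalizes by assuming WLOG some $b_i>0$ and runs only the maximum side, concluding $n_j\ge 1+k_j>k_j$, whereas you run both sides to get $\max_j b_j\le 0\le\min_j b_j$; these are interchangeable. The counting is where you diverge. The paper proves $|\latticeAffine{d}/\sublatticeAffine{\kk}|=D_\kk$ by showing every class meets the box $\{0\le a_i\le k_i\}$ via a ``cyclic correction'' process whose termination is only sketched, then pruning the inner box; its own Remark even concedes that this proof leaves the equality with $\det M_\kk$ mysterious. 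You instead note that the rows of $M_\kk$ sum to $(1,\dots,1)$, so $(1,\dots,1)\in\Zrowspan{M_\kk}$ and hence $\latticeAffine{d}/\sublatticeAffine{\kk}\cong\Z^{d+1}/\Zrowspan{M_\kk}$, whose order is $|\det M_\kk|$ by the standard index-equals-determinant fact; completeness of $\fundamentalvectorsAffine{\kk}$ as a system of representatives then follows from pairwise inequivalence plus matching cardinalities. This buys you a rigorous replacement for the paper's vague termination argument and a conceptual explanation of exactly the point the Remark flags --- your row-sum computation is precisely the criterion ``$(1,\dots,1)\in\Zrowspan{M_\kk}$'' that the Remark tests on the Foster-census matrices. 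Your setup also quietly repairs a small slip in the paper: it claims $b$ is unique with $t-s=(b_1,\dots,b_{d+1})M_\kk$, but for lattice elements $b$ is only unique modulo $\Z(1,\dots,1)$ (the kernel of the coefficient map); since you solve $a-a'=bM_\kk$ at the level of coefficient tuples in $\Z^{d+1}$ after absorbing $\Z(1,\dots,1)$ into the row span, your uniqueness claim is genuinely correct. Two cosmetic quibbles only: the maximizers of the $b_j$ need not form a cyclic arc (a proper subset suffices, and that is all your argument actually uses), and the final contradiction is with the strict inequality $\min_j b_j<\max_j b_j$ forced by non-constancy, not with $\min\le\max$ itself.
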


\begin{proof}
Since \(\latticeAffine{d} \subseteq V\), with \(V\) a \(d\)-dimensional space, and any \(d\) rows of \(M_\kk\) are linearly independent, the quotient is finite.
To count the elements, we first give a list of elements which contains a representative of every element of the quotient.
We then remove overcounted elements, and show that each of the remaining elements is in a distinct equivalence class.

The following set contains a representative of every element of the quotient.

\[A= \left\{a_1\widetilde w_1+\dots + a_{d+1} \widetilde w_{d+1} :\ 0 \leq a_i \leq k_i\in \Z^{d+1} \right\}.\]

Suppose \(v = b_1\widetilde w_1+\dots + b_{d+1} \widetilde w_{d+1} \in \latticeAffine{d}\). 
If for some \(i, b_i \not \in [0,k_i]\), there is an appropriate \(k \in \Z \) so that the representation of \(v+ k M_{\kk,i}\), where \(M_{\kk,i}\) is the \(i\)th row of \(M_{\kk}\), is in the range \([0,k_i]\). 
Cyclically correcting entries until they all lie within the specified set is a terminating process, since \(M_{\kk,i}\) has less effect on the \((i+1)\)th entry than \(M_{\kk,i+1}\) does.
This shows that every element of \(\latticeAffine{d}\) has a representative in this set.

We then remove elements in this set which have multiple representatives.
We remove all elements of the form 
\[B=\{a_1\widetilde w_1+\dots + a_{d+1} \widetilde w_{d+1} :\ 1 \leq a_i \leq k_i \in 
\Z^{d+1} \}.\]
To any such element, we can repeatedly subtract the all ones vector, \((1,\ldots, 1)\), until one of the entries is equal to \(0\).
The resulting vector is still in the first set, and differs from the starting element only by an integer combination of the rows of~\(M_\kk\).

We finally show that the following set has no repeated equivalence classes.
\[\fundamentalvectorsAffine{\kk}= A\smallsetminus B =
\left\{a_1\widetilde w_1+\dots + a_{d+1} \widetilde w_{d+1} :
\begin{array}{l}
    \ 0 \leq a_i \leq k_i\in \Z^{d+1}  \\ 
    \text{at least one } a_i = 0
\end{array}
\right\}.\]

Suppose \(s,t \in \fundamentalvectorsAffine{\kk}\) and \(t-s \in \sublatticeAffine{\kk}\). 
Since the rows of \(M_{\kk}\) are linearly independent, there is a unique choice of \(b_1,\dots,b_{d+1} \in \Z\) so that \(t-s = (b_1,\dots,b_{d+1}) M_{\kk}  \).

Consider the cyclic sequence of the \(b_i\)s.
If all the \(b_i\)s are equal, then \(t-s\) is a scalar multiple of the all ones vector.
Since $s,t\in \fundamentalvectorsAffine{\kk}$, this is possible only if $s=t$. 

If not all \(b_i\)s are equal, without loss of generality, assume some of the \(b_i\) are positive.
Let \(b_j\) be positive.
If \(b_{j-1} \geq b_j\), then \(b_{j-1}\) is positive, and we repeat using \(b_{j-1}\) in place of \(b_j\).
This repetition must end, because not all the \(b_i\) are equal and we are considering them cyclically.
Therefore we find a \(b_j\) so that \(b_j > 0\) and \(b_{j-1} < b_j\).
We now consider the value of \(t-s\) in the \(j\)th coordinate,  
\[
(t-s)_j = -k_j b_{j-1} + (k_j+1) b_j = b_j + (b_j -b_{j-1}) k_j.
\]
Since \(b_j > 0\) and \(b_j > b_{j-1}\), this coordinate is greater or equal to \(1 + k_j\).

However, the \(j\)th coordinate of \(t\) is at most \(k_j\), and the \(j\)th coordiante of \(s\) is at least \(0\), so the \(j\)th coordinate of \(t-s\) is at most \(k_j\).
This contradiction shows that the only way for \(t-s\) to be in \(\sublatticeAffine{\kk}\) is for them to be equal.
Therefore \(\fundamentalvectorsAffine{\kk}\) has a unique representative from each equivalence class of \(\latticeAffine{d}/\sublatticeAffine{\kk}\).

Counting the elements of \(\fundamentalvectorsAffine{\kk}\), we have \(\prod (k_i+1)\) elements in the first set $A$, and remove the \(\prod (k_i)\) elements of the second set $B$.
Together, these are \(\prod (k_i+1) -\prod{k_i} = \det M_\kk\) elements.
\end{proof}

\begin{remark}
In Lemma~\ref{lem_quotient_size_affine}, the determinant of \(M_\kk\) is the same as the number of points of the lattice quotient. 
From our proof, it is not obvious why this should be the case.
Taking a different perspective, the number of elements of \(\latticeAffine{d}/\sublatticeAffine{\kk}\) is the number of lattice points of \(\Z^{d+1}/ (1,\dots,1) / \Zrowspan{M_\kk} \), where $\Zrowspan{M_\kk}$ denotes the set of integer linear combinations of the rows of $M_\kk$.
If the vector \((1,\dots,1)  \in \Zrowspan{M_\kk}\), then this can be simplified to \(\Z^{d+1}/\Zrowspan{M_\kk} \), where it is more clear that \(\det M_\kk\) counts the elements.
This is not the case if \((1,\dots,1)  \notin \Zrowspan{M_\kk}\); For example, compare the determinants and number of hexagons within Figure~\ref{fig_fostercensus}, and check if \((1,1,1)\) appears within the integer row span of the matrices: For Heawood and M\"obius-Kantor, \((1,1,1)\) does appear in the span, while for~$K_{3,3,}$, Pappus, Nauru, and F26A, it does not.
\end{remark}

\subsubsection{The parallelepiped domain}
Our next goal is to show that the underlying topological structure of $\torus{\kk}$ is a \(d\)-dimensional torus.
For $i \in [d+1]$, we define 
\begin{align}
\widetilde p_i = 
m_{i,1} \widetilde w_1 + m_{i,2} \widetilde w_2 + \dots + m_{i,d+1} \widetilde w_{d+1} \in V,
\end{align}
where $(m_{i,1},\dots,m_{i,d+1})$ is the $i$th row of the matrix $M_\kk$.
The first $d$ vectors $\widetilde p_1,\dots,\widetilde p_d$ are lineraly independent, and the last one is the negative of the sum of the others
\begin{align}
\widetilde p_{d+1}= -(\widetilde p_1+\dots+\widetilde p_d).
\end{align}
Indeed, adding all \(d+1\) many \(\widetilde p_{i}\) gives \(\widetilde w_1 + \widetilde w_2 + \dots + \widetilde w_{d+1}\) which is 0.
In particular, 
\begin{align}\label{eq_sublattice_spanvectors_parallelepiped}
\sublatticeAffine{\kk} = \Zspan{\widetilde p_1, \dots, \widetilde p_{d+1}}
= \Zspan{\widetilde p_1, \dots, \widetilde p_d}
\end{align}
where $\Zspan{\cdot}$ denotes the set of integer linear combinations of the elements of the given set.  
Note that the vectors $\widetilde p_i$ depend on $\kk$, but we omit $\kk$ in our notation for simplicity. 

\begin{definition}[The parallelepiped domain]\label{def_parallelepipeddomain}
The \defn{parallelepiped domain} of $\torus{\kk}$ is
\begin{align}
\parallelepipedDomainAffine{\kk} = 
\left \{
a_1\widetilde p_1 + \dots + a_d\widetilde p_d: \,
0\leq a_1 \leq 1 \text{ with } a_i\in \mathbb{R} 
\right \}.
\end{align}
\end{definition}

Figure~\ref{fig_parallelepiped_permutahedron_domains} (left) shows an example of the parallelepiped domain of $\torus{(3,1,2)}$.

\begin{figure}[htb]
\centering
    \begin{tabular}{cc}
    \begin{tikzpicture}[scale=0.5,every node/.append style={circle,draw=black,inner sep=1.8pt,align=center}]
    \def\con{0.8660254};
    \draw[draw=white,fill=black!10] (-1*\con,-5.5) -- (5*\con,0.5) -- (-1*\con,3.5) -- (-7*\con,-2.5) -- (-1*\con,-5.5);
    \node[fill=olive] (olv) at (-2*\con, 2){};
    \node[fill=red] (red) at (0*\con, 2){};
    \node[fill=indigo] (ind) at (-3*\con, 0.5){};
    \node[fill=bluishgreen] (blg) at (-1*\con, 0.5){};
    \node[fill=orange] (org) at (1*\con, 0.5){};
    \node[fill=skyblue] (sky) at (3*\con, 0.5){};
    \node[fill=blue2] (bl2) at (-4*\con, -1){};
    \node[fill=magenta] (mag) at (-2*\con, -1){};
    \node[fill=purple] (pur) at (0*\con, -1){};
    \node[fill=blue] (blu) at (2*\con, -1){};
    \node[fill=teal] (tea) at (-5*\con, -2.5){};
    \node[fill=green] (gre) at (-3*\con, -2.5){};
    \node[fill=indigo2] (in2) at (-1*\con, -2.5){};
    \node[fill=turquoise] (tur) at (1*\con, -2.5){};
    \node[fill=purple2] (pr2) at (-2*\con, -4){};
    \node[fill=brown] (bro) at (0*\con, -4){};
    \draw  (-3*\con,3.5) -- (olv) -- (-4*\con, 2) -- (ind) -- (-5*\con, 0.5) -- (bl2) -- (-6*\con, -1) -- (tea) -- (-6*\con, -4);
    \draw (-3*\con,-5.5) -- (pr2);
    \draw (bro) -- (2*\con, -4) -- (tur) --  (3*\con, -2.5) -- (blu) -- (4*\con, -1) -- (sky) -- (4*\con, 2);
    \draw (red) -- (1*\con,3.5);
    \draw[draw=white, fill=white] (-7*\con,-2.5) -- (-1*\con,3.5) -- (-7*\con,3.5) --cycle;
    \draw[draw=white, fill=white] (-7*\con,-2.5) -- (-1*\con,-5.5) -- (-7*\con,-5.5) --cycle;
    \draw[draw=white, fill=white] (5*\con,0.5) -- (-1*\con,-5.5) -- (5*\con,-5.5) --cycle;
    \draw[draw=white, fill=white] (5*\con,0.5) -- (-1*\con,3.5) -- (5*\con,3.5) --cycle;
    \node[fill=yellow] (yel1) at (2*\con, 2){};
    \node[fill=yellow] (yel2) at (-4*\con, -4){};
    \node[fill=lightblue] (lbu1) at (-1*\con,3.5){};
    \node[fill=lightblue] (lbu2) at (5*\con, 0.5){};
    \node[fill=lightblue] (lbu3) at (-7*\con, -2.5){};
    \node[fill=lightblue] (lbu4) at (-1*\con,-5.5){};
    \draw (red) -- (lbu1) -- (olv) -- (blg) -- (mag) -- (pur) -- (org) -- (sky) -- (yel1) -- (org) -- (blg) -- (red) -- (org) -- (blu) -- (pur) -- (blg) -- (ind) -- (olv) -- (red) -- (yel1);
    \draw (lbu2) -- (sky) -- (blu) -- (tur) -- (pur) -- (in2) -- (mag) -- (ind) -- (bl2) -- (mag) -- (gre) -- (in2) -- (tur) -- (bro) -- (in2) -- (pr2) -- (bro) -- (lbu4) -- (pr2) -- (yel2) -- (gre) -- (tea) -- (yel2);
    \draw (lbu3) -- (tea) -- (bl2) -- (gre) -- (pr2);
    \end{tikzpicture}
    &
    \begin{tikzpicture}[scale=0.5,every node/.append style={circle,draw=black,inner sep=1.8pt,align=center}]
    \def\con{0.8660254};
    \begin{scope}[shift={(-0.5*\con,-0.25)}]
    \draw[draw=white,fill=black!10] (-2*\con,-5) -- (-4*\con,-1) -- (-2*\con,4) -- (2*\con,5) -- (4*\con,1) -- (2*\con,-4) -- (-2*\con,-5);
    \end{scope}
    \node[fill=lightblue] (lbu1) at (-1*\con,3.5){};
    \node[fill=teal] (tea) at (1*\con, 3.5){};
    \node[fill=olive] (olv) at (-2*\con, 2){};
    \node[fill=red] (red) at (0*\con, 2){};
    \node[fill=indigo] (ind) at (-3*\con, 0.5){};
    \node[fill=bluishgreen] (blg) at (-1*\con, 0.5){};
    \node[fill=orange] (org) at (1*\con, 0.5){};
    \node[fill=skyblue] (sky) at (3*\con, 0.5){};
    \node[fill=blue2] (bl2) at (-4*\con, -1){};
    \node[fill=magenta] (mag) at (-2*\con, -1){};
    \node[fill=purple] (pur) at (0*\con, -1){};
    \node[fill=blue] (blu) at (2*\con, -1){};
    \node[fill=green] (gre) at (-3*\con, -2.5){};
    \node[fill=indigo2] (in2) at (-1*\con, -2.5){};
    \node[fill=turquoise] (tur) at (1*\con, -2.5){};
    \node[fill=purple2] (pr2) at (-2*\con, -4){};
    \node[fill=brown] (bro) at (0*\con, -4){};
    \node[fill=yellow] (yel1) at (2*\con, 2){};
    \draw (2*\con, 5) -- (tea) -- (0*\con, 5) -- (lbu1) -- (-3*\con,3.5) -- (olv) -- (-4*\con, 2) -- (ind) -- (-5*\con, 0.5) -- (bl2) -- (-5*\con, -1);
    \draw (-3*\con,-5.5) -- (pr2);
    \draw (1*\con, -5.5) -- (bro) -- (2*\con, -4) -- (tur) --  (3*\con, -2.5) -- (blu) -- (4*\con, -1) -- (sky) -- (4*\con, 2) -- (yel1) -- (tea) -- (3*\con, 3.5) -- (yel1);
    \draw (red) -- (tea) -- (lbu1) -- (-2*\con, 5);
    
    \draw (red) -- (lbu1) -- (olv) -- (blg) -- (mag) -- (pur) -- (org) -- (sky) -- (yel1) -- (org) -- (blg) -- (red) -- (org) -- (blu) -- (pur) -- (blg) -- (ind) -- (olv) -- (red) -- (yel1);
    \draw (4*\con, 0.5) -- (sky) -- (blu) -- (tur) -- (pur) -- (in2) -- (mag) -- (ind) -- (bl2) -- (mag) -- (gre) -- (in2) -- (tur) -- (bro) -- (in2) -- (pr2) -- (bro) -- (-1*\con,-5.5) -- (pr2) -- (-4*\con, -4) -- (gre) -- (-5*\con, -2.5) -- (bl2) -- (gre) -- (pr2);

    \draw[draw=white, fill=white] (2*\con, 5.1) -- (1.5*\con,4.75) --     (-2.5*\con,3.75) -- (-2*\con, 5.1)--cycle;

    \draw[draw=white, fill=white] (-2.1*\con, 5) -- (-2.5*\con,3.75) --     (-4.5*\con,-1.25) -- (-5*\con, -1) -- (-5.1*\con, 0.6) --cycle;
    
    \draw[draw=white, fill=white] (-5*\con, -1) -- (-4.5*\con,-1.25) --     (-2.5*\con,-5.25) -- (-3*\con,-5.6) -- (-5.1*\con, -2.5)--cycle;
    
    \draw[draw=white, fill=white] (-3*\con,-5.6) -- (-2.5*\con,-5.25) --     (1.5*\con,-4.25) -- (1.1*\con, -5.6)--cycle;
    
    \draw[draw=white, fill=white] (1*\con, -5.6) -- (1.5*\con,-4.25) --     (3.5*\con,0.75) --  (4*\con, 0.5)--  (4.1*\con, -1)--cycle;
    
    \draw[draw=white, fill=white] (4*\con, 0.5) -- (3.5*\con,0.75) --     (1.5*\con,4.75) -- (2*\con, 5.1) -- (4.1*\con, 2) --cycle;

    \end{tikzpicture}
    \end{tabular}
\caption{The parallelepiped domain and the permutahedron domain of~$\torus{(3,1,2)}$.}
\label{fig_parallelepiped_permutahedron_domains}
\end{figure}
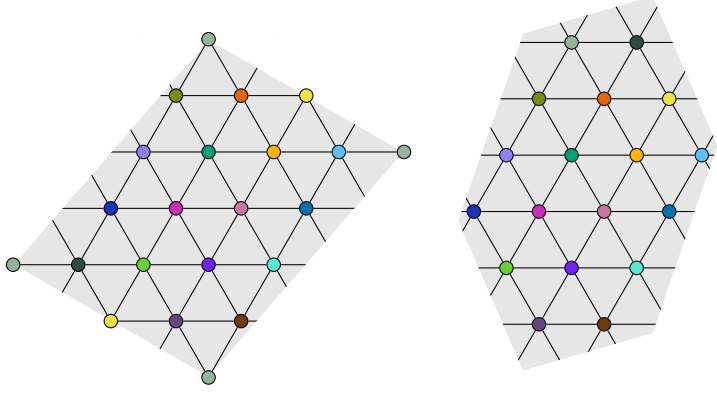

\begin{proposition}\label{prop_torus}
If $\kk=(k_1,\dots,k_{d+1})\in \N^{d+1}$ for some $d\geq 1$, then $\torus{\kk}$ is a triangulated $d$-dimensional torus on $D_\kk=\det M_\kk$ many vertices if \(k_i\geq 1\) for all \(i\). 
\end{proposition}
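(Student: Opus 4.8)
The plan is to realize $\torus{\kk}$ as the quotient of the triangulated space $|\complexAffine{d}|=V\cong\R^d$ by the free translation action of the lattice $\sublatticeAffine{\kk}$, and to verify three things: that the underlying space is a $d$-torus, that there are $D_\kk$ vertices, and — the real content — that the quotient is a genuine simplicial complex and not merely a $\Delta$-complex. By \eqref{eq_sublattice_spanvectors_parallelepiped}, $\sublatticeAffine{\kk}$ is generated by $\widetilde p_1,\dots,\widetilde p_d$, which are linearly independent and therefore span the $d$-dimensional space $V$; hence $\sublatticeAffine{\kk}$ is a rank-$d$ lattice acting on $V$ freely and properly discontinuously by translations, so $V/\sublatticeAffine{\kk}\cong T^d$. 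Since $\complexAffine{d}$ is invariant under all of $\latticeAffine{d}\supseteq\sublatticeAffine{\kk}$, its simplicial structure descends to the quotient, whose underlying space is $T^d$. The vertex count is immediate from \Cref{lem_quotient_size_affine}: the vertices of $\torus{\kk}$ are the classes in $\latticeAffine{d}/\sublatticeAffine{\kk}$, of which there are $\det M_\kk=D_\kk$.

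Next I would reduce ``is a simplicial complex'' to a lattice-avoidance condition. A free simplicial action by translations yields a simplicial complex as quotient provided no simplex $\sigma$ and nontrivial translate $\sigma+\lambda$ (with $0\neq\lambda\in\sublatticeAffine{\kk}$) are simultaneously faces of one alcove of $\complexAffine{d}$. Indeed, if they were faces of a common alcove $\tau$, then for any vertex $u$ of $\sigma$ both $u$ and $u+\lambda$ would be vertices of $\tau$, making $\lambda$ the difference of two vertices of a single alcove. So it suffices to show that no nonzero $\lambda\in\sublatticeAffine{\kk}$ is such a within-alcove difference vector. Using the duality with the permutahedral tiling from \Cref{sec_affineArrangement_permutahedralTiling}, the vertices of an alcove are the centers of $d+1$ pairwise facet-adjacent permutahedra, and the difference of two such centers is a facet-crossing vector of $\permutahedralTiling{d}$; for the facet of $\perm{d}$ indexed by a proper nonempty $A\subsetneq[d+1]$ this difference is $\sum_{i\in A}\widetilde w_i$, whose coefficient vector is the indicator $\mathbf{1}_A$. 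Modulo the relation $\widetilde w_1+\dots+\widetilde w_{d+1}=0$, every such vector has a representative with all coordinates in $\{0,1\}$, so its coordinate-spread (which is well defined on $\latticeAffine{d}=\Z^{d+1}/\Z\mathbf{1}$, being unchanged by adding multiples of $\mathbf{1}$) is at most $1$.

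I would then prove the avoidance by a spread computation that mirrors the proof of \Cref{lem_quotient_size_affine}. Write a nonzero $\lambda\in\sublatticeAffine{\kk}$ as $\lambda=\sum_i b_i M_{\kk,i}$; the $b_i$ are not all equal, for otherwise $\lambda$ would be a multiple of $\mathbf{1}$ and hence $0$ in $\latticeAffine{d}$. Its $j$-th coordinate is
\[
\lambda_j=b_j+k_j(b_j-b_{j-1})
\]
cyclically, exactly as in that lemma. At a cyclic boundary of the maximum, i.e.\ an index $j$ with $b_j=B:=\max_i b_i$ and $b_{j-1}<B$, the hypothesis $k_j\geq 1$ gives $\lambda_j=B+k_j(B-b_{j-1})\geq B+1$; symmetrically, at a boundary of the minimum $\beta:=\min_i b_i<B$ one finds a coordinate $\leq \beta-1$. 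Hence the spread of $\lambda$ is at least $(B+1)-(\beta-1)=(B-\beta)+2\geq 3$. This exceeds the spread $\leq 1$ of every within-alcove difference vector, so no such $\lambda$ can be one; note the hypothesis $k_i\geq 1$ is used precisely at this step.

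Combining the three parts, $\torus{\kk}$ is a simplicial complex whose geometric realization is $T^d$, hence a triangulated $d$-dimensional torus on $D_\kk$ vertices. The main obstacle is not the arithmetic of the last paragraph — that is a direct re-run of \Cref{lem_quotient_size_affine} — but the geometric bookkeeping of the middle paragraph: correctly reducing ``simplicial complex'' to the combinatorial avoidance condition, and identifying the within-alcove difference vectors with the facet-crossing vectors $\sum_{i\in A}\widetilde w_i$ of the permutahedral tiling so that the spread bound $\leq 1$ is justified. Once that identification is secured, the remaining comparison of spreads finishes the proof.
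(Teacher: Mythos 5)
Your overall architecture matches the paper's proof: the topology via the free rank-$d$ lattice action (the paper uses the parallelepiped fundamental domain from \Cref{def_parallelepipeddomain}, which is the same substance), the vertex count via \Cref{lem_quotient_size_affine}, and a ``spread'' bound on nonzero elements of $\sublatticeAffine{\kk}$. Your arithmetic in the last paragraph is correct and is precisely the paper's observation that the smallest difference between minimum and maximum coordinates of a nonzero element of $\sublatticeAffine{\kk}$ is $3$ when all $k_i\geq 1$. The gap is in your middle paragraph, in the reduction of ``simplicial complex'' to a lattice-avoidance condition.

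The claimed reduction --- that the quotient is simplicial provided no simplex $\sigma$ and nontrivial translate $\sigma+\lambda$ are faces of a common alcove, equivalently that no nonzero $\lambda\in\sublatticeAffine{\kk}$ is a within-alcove difference --- is false as a general statement, and too weak here. It rules out a vertex appearing twice in a face (loops of length $1$), but not two \emph{distinct} face classes with the same quotient vertex set. Concretely, if $u$ is adjacent to both $v$ and $v+\lambda$ while $v$ and $v+\lambda$ share no alcove, then $\{u,v\}$ and $\{u,v+\lambda\}$ are in general not translates of each other by any element of $\sublatticeAffine{\kk}$, so they descend to two distinct edge classes on the same pair of quotient vertices --- a bi-edge, which is exactly what the paper's proof excludes with ``paths of length $2$ could lead to bi-edges.'' A minimal counterexample to your reduction lemma: take $d=1$ and quotient the triangulated line by $2\Z$; no vertex lies in a common alcove (edge) with its nontrivial translates, so your condition holds, yet the quotient is a bigon, not a simplicial complex. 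The fix is available from what you already proved: within-alcove differences have spread $\leq 1$, so a difference realized by a path of length $2$ (a sum of two edge vectors, each an indicator vector modulo $(1,\dots,1)$) has spread $\leq 2 < 3$; and once no nonzero $\lambda\in\sublatticeAffine{\kk}$ joins a vertex to itself by a path of length $\leq 2$, faces of the quotient are determined by their vertex sets (translate one face so that one vertex matches, observe that each remaining pair of corresponding vertices lies at graph distance $\leq 2$, hence coincides). You need to add this step explicitly; as written, your argument only establishes the ``no repeated vertex within a face'' half, for which spread $\geq 2$ would already suffice --- the fact that your computation yields $3$ rather than $2$ is the signal that the bi-edge exclusion is where the bound is actually needed.
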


\begin{proof}
By~\eqref{eq_sublattice_spanvectors_parallelepiped} and the fact that $\widetilde p_1,\dots, \widetilde p_d$ are linearly independent, the parallelepiped domain~$\parallelepipedDomainAffine{\kk}$ is a fundamental domain of the quotient $\torus{\kk}= \complexAffine{d} / \sublatticeAffine{\kk}$. 
Therefore the topological structure of the quotient is that of a parallelepiped with opposite facets identified, a classical presentation of the \(d\)-torus.

From Lemma \ref{lem_quotient_size_affine}, it is clear that $\torus{\kk}$ has $D_\kk=\det M_\kk$ many vertices.

Finally, this complex is a triangulation as long as no face is repeated and no vertex appears more than once in each face.
Both of these are ensured by the condition that \(k_i\geq 1\) for all \(i\).
The shortest nontrivial path from a vertex to itself is of length \(3\), the smallest difference between minimum and maximum in elements of \(\sublatticeAffine{\kk}\).
Paths of length \(1\) could lead to repeated vertices, and paths of length \(2\) could lead to bi-edges.
With both of these impossible, we conclude that $\torus{\kk}$ is a simplicial complex.
\end{proof}

\begin{remark}
In this section, we relaxed the condition on \(d\) to include \(1\).
While cycles are not particularly interesting, there is no problem including them.

A more interesting extension is when  $\kk$ has \(0\)s in it.
For the proof that $\torus{\kk}$ is a simplicial complex, we require no short cycles.
If short cycles exist, then $\torus{\kk}$ is no longer a simplicial complex, but rather a delta complex, which relaxes the repeated face and vertex rule.
Indeed, such delta complexes are worthy of study on their own merits, as done in Garber and Sikiri\'c~\cite{garber_sikiric_periodictriangulations_2020}.
There also exist choices for $\kk$ which include \(0\)s and yet remain simplicial complexes.
One such choice is \(2,0,2,2,0,2\).
An interested reader is encouraged to work out the conditions on \(\kk\) which are necessary and sufficient for  $\torus{\kk}$ to be a simplicial complex.
\end{remark}

\subsubsection{The permutahedron domain}\label{sec_permutahedrondomain_affine}
The choice of the fundamental domain to be a parallelepiped, as in the previous section, is useful to understand the topology of~$\torus{\kk}$. 
However, there is another natural domain that we would like to mention in this section. 
In dimension 2, the usual representation of a torus is a square (or rhombus) with opposite sides identified. 
Another, not so common, representation of the torus is a hexagon with opposite sides identified by translation (preserving their orientation).
In this section, we show an analog result in higher dimensions. 

\begin{proposition}\label{prop_permutahedron_torus}
The permutahedron $\perm{d}$ with opposite facets identified by translation is a topological $d$-dimensional torus. 
\end{proposition}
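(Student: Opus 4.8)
The plan is to realize the identification space as the quotient of the affine hyperplane $H=\{\x\in\R^{d+1}:x_1+\dots+x_{d+1}=1+\dots+(d+1)\}$ by the translation lattice $\lattice{d}$, and then to recognize that quotient as a torus. First I would observe that $H$ is a $d$-dimensional affine space and that $\lattice{d}$, generated by $w_1,\dots,w_{d+1}$ subject only to the relation $w_1+\dots+w_{d+1}=0$, is a lattice of full rank $d$ acting on $H$ by translation. Hence $H/\lattice{d}$ is homeomorphic to $\R^d/\Z^d$, the $d$-dimensional torus.

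Next I would invoke the permutahedral tiling $\permutahedralTiling{d}$ described above: the translates $\perm{d}+v$, for $v\in\lattice{d}$, cover $H$ and have pairwise disjoint interiors. This says precisely that $\perm{d}$ is a fundamental domain for the $\lattice{d}$-action, so the quotient map $q\colon H\to H/\lattice{d}$ restricts to a continuous surjection $\perm{d}\to H/\lattice{d}$ that is injective on the interior of $\perm{d}$. Consequently $H/\lattice{d}$ is exactly $\perm{d}$ with its boundary glued according to the rule $x\sim y$ iff $y-x\in\lattice{d}\smallsetminus\{0\}$, and it remains to identify this gluing with the prescribed ``opposite facets by translation'' identification.

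The heart of the argument is this last step, which I would carry out facet by facet. Since the permutahedral tiling is face-to-face (its face poset is dual to that of the simplicial complex $\complexAffine{d}$), each facet $F$ of $\perm{d}$ equals $\perm{d}\cap(\perm{d}+v_F)$ for a unique neighbor, with $0\neq v_F\in\lattice{d}$; viewing $F$ as a facet of $\perm{d}+v_F$ shows that $F-v_F$ is again a facet of $\perm{d}$ and that the gluing carries $F$ onto $F-v_F$ by the translation $x\mapsto x-v_F$. To see that $F$ and $F-v_F$ are genuinely opposite facets, I would use that $\perm{d}$ is centrally symmetric: the facet $F_A$ cut out by $\sum_{a\in A}x_a=1+\dots+|A|$ as in \eqref{eq_facets_permutahedron} and the facet $F_{\bar A}$ for the complementary set have outward normals pointing along the projections to $V$ of $-\sum_{a\in A}e_a$ and $-\sum_{a\in\bar A}e_a$ respectively; since $\mathbf{1}=\sum_j e_j$ projects to $0$ in $V$, these two projections are negatives of one another, so $F_{\bar A}$ is the facet opposite $F_A$. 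Matching $v_F$ with this central-symmetry correspondence then gives $F-v_F=F_{\bar A}$, so every facet is glued to its opposite by a translation, and none is glued to itself or to a non-opposite facet.

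I expect the main obstacle to be exactly this bookkeeping, namely checking that the unique lattice vector $v_F$ realizing the neighbor across $F_A$ is the one implementing the opposite-facet correspondence $F_A\to F_{\bar A}$, so that the lattice gluing and the stated opposite-facet-by-translation gluing coincide on the nose (including that the restriction to each facet is the intended translation). Once the facet identifications agree, the identifications on lower-dimensional faces are forced by continuity of $q$, and we conclude $\perm{d}/\!\sim\;\cong H/\lattice{d}\cong\R^d/\Z^d$, a $d$-dimensional torus.
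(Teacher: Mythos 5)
Your proof is correct, but it takes a genuinely different route from the paper's. The paper does not work with $\lattice{d}$ acting on $\perm{d}$ directly: it first proves in \Cref{prop_torus} that $\torus{\kk}=\complexAffine{d}/\sublatticeAffine{\kk}$ is a torus via the \emph{parallelepiped} domain, then introduces the permutahedron domain $\permutahedronDomainAffine{\kk}$, verifies through the rotation identity~\eqref{eq_facettranslation_permutahedrondomain} that the opposite facets $[B_1,B_2]$ and $[B_2,B_1]$ differ by the sublattice vector $\sum_{i\in B_1}\widetilde p_i\in\sublatticeAffine{\kk}$, and finally transfers the torus structure to $\perm{d}$ by an affine equivalence. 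You instead quotient the ambient hyperplane $H$ by the full lattice $\lattice{d}$, observe $H/\lattice{d}\cong\R^d/\Z^d$, and use the permutahedral tiling to exhibit $\perm{d}$ itself as a fundamental domain; your normal-vector argument (outward normals of $F_A$ and $F_{\bar A}$ project to negatives of each other in $V$, and a facet of $\perm{d}+v_F$ has normal opposite to that of the same facet viewed in $\perm{d}$, so uniqueness of facet normals forces $F-v_F=F_{\bar A}$) neatly replaces the paper's explicit computation in \Cref{lem_rotation_partition_one}, and your construction is in effect the degenerate one-tile case of the paper's machinery. What your approach buys is self-containedness and elementarity --- no need for \Cref{prop_torus} or the $\widetilde p_i$ basis; what the paper's approach buys is more structure, since it realizes the quotient simultaneously as the triangulated torus $\torus{\kk}$ for every $\kk$, which is what later sections use. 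One step you should tighten: saying the identifications on lower-dimensional faces are ``forced by continuity'' is not quite an argument. The clean way is to note that the map $\perm{d}/\!\approx\;\to H/\lattice{d}$ induced by the facet gluings is a continuous surjection from a compact space to a Hausdorff space, so it suffices to prove injectivity, i.e.\ that any two lattice-equivalent boundary points are linked by a chain of single facet identifications; this follows from the face-to-face property of $\permutahedralTiling{d}$, since the tiles containing a common face correspond to the vertices of the dual simplex in $\complexAffine{d}$, whose graph is complete, so consecutive tiles in a chain can always be chosen to share a facet.
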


In order to show this result, we present a fundamental domain for $\torus{\kk}$ which is a permutahedron. 

\begin{definition}\label{def_permutahedrondomainAffine}
The \defn{permutahedron domain} of $\torus{\kk}$ is
\begin{align}
\permutahedronDomainAffine{\kk} = 
\frac{1}{d+1}
\conv\left \{
(a_1,\dots,a_{d+1})_{\widetilde p}
: \,
\{a_1,\dots,a_{d+1}\}=[d+1]
\right \},
\end{align}
where
\begin{align}
(a_1,\dots,a_{d+1})_{\widetilde p} =
a_1 \widetilde p_1+\dots+a_{d+1}\widetilde p_{d+1}.
\end{align}
\end{definition}

An  example is illustrated on Figure~\ref{fig_parallelepiped_permutahedron_domains} (right).

Our definition of the permutahedron domain is a slight modification of the definition of the classical permutahedron $\perm{d}$. Indeed, the projection of $\perm{d}$ to the subspace $V$ is 
\begin{align}
\frac{1}{d+1}
\conv\left \{
(a_1,\dots,a_{d+1})_{w}
: \,
\{a_1,\dots,a_{d+1}\}=[d+1]
\right \},
\end{align}
where
\begin{align}
(a_1,\dots,a_{d+1})_{w} =
a_1 w_1+\dots+a_{d+1}w_{d+1}.
\end{align}

This can be shown by checking that if $\{a_1,\dots,a_{d+1}\}=[d+1]$ then
\begin{align}
\frac{1}{d+1} (a_1,\dots,a_{d+1})_w 
&= (a_1,\dots,a_{d+1}) - \frac{1+\dots+(d+1)}{d+1}(1,\dots,1)\\
&= (a_1,\dots,a_{d+1}) - \operatorname{barycenter}(\perm{d}).
\end{align}

As a consequence, $\permutahedronDomainAffine{\kk}$ is an affine transformation of $\perm{d}$ (sending the barycenter to zero and $w_i$ to $\widetilde p_i$), 
and so $\permutahedronDomainAffine{\kk}$ is combinatorially a permutahedron. 

Furthermore, the facets of $\permutahedronDomainAffine{\kk}$ can also be labeled by ordered partitions~$[B_1,B_2]$ of~$[d+1]$ consisting of two proper subsets. 
The vertices of this facet are of the form $(a_1,\dots,a_{d+1})_{\widetilde p}$ where
the following two conditions are satisfied
\begin{align}
\left \{ 
a_i
\right \}_{i\in B_1}
&=\{1,2,\dots,|B_1|\},
\\
\left \{ 
a_i
\right \}_{i\in B_2}
&=\{|B_1|+1,|B_1|+2,\dots,d+1\}.
\end{align}

The two facets $[B_1,B_2]$ and $[B_2,B_1]$ are opposite to each other and, as in~\Cref{lem_rotation_partition_one}, we have 
\begin{align}\label{eq_facettranslation_permutahedrondomain}
[B_2,B_1] = [B_1,B_2]+ \sum_{i\in B_1} \widetilde p_i.
\end{align}

Since $\sum_{i\in B_1} \widetilde p_i \in \sublatticeAffine{\kk}$, the permutahedron $\permutahedronDomainAffine{\kk}$ is a fundamental domain for $\torus{\kk}$ with opposite facets identified by the translation~\eqref{eq_facettranslation_permutahedrondomain}. 

\begin{proof}[Proof of~\Cref{prop_permutahedron_torus}]
As just explained, $\torus{\kk}$ is a triangulation of the permutahedron $\permutahedronDomainAffine{\kk}$ with opposite facets identified by translation. 
Since $\torus{\kk}$ is a torus by~\Cref{prop_torus} and $\perm{d}$ is an affine transformation of $\permutahedronDomainAffine{\kk}$, then the result follows. 
\end{proof}

\subsection{The Heawood complex}\label{sec_heawoodComplex}
We can also make some identifications in the infinite complex of faces of the permutahedral tiling to obtain a finite polytopal complex. 

For this, we use the lattices $\sublattice{\kk}\subset \lattice{d}$ defined in Equations~\eqref{eq_lattice_one} and~\eqref{eq_sublattice}, which are defined in terms of linear combinations of the vectors $w_i$ from Equation~\eqref{eq_vector_wi}. They satisfy 
\begin{align}\label{eq_lattice_affineLattice}
    \sublattice{\kk} = (d+1)\sublatticeAffine{\kk},
    \qquad
    \lattice{d} = (d+1)\latticeAffine{d},
    \qquad
    w_i = (d+1) \widetilde w_i.
\end{align}

We say that two faces $\B,\B'$ of the permutahedral tiling $\permutahedralTiling{d}$ are \defn{$\kk$-equivalent}, and write \mbox{$\B\equivalent{\kk} \B'$}, if 
\begin{align}
\B'=\B+v \text{ for some } v\in \sublattice{\kk}.
\end{align}
That is, the face $\B'$ can be obtained by translating the face $\B$ by a vector $v\in \sublattice{d}$. 

\begin{definition}[The Heawood complex]
\label{def_heawood_complex}
Let $\kk=(k_1,\dots,k_{d+1})\in \N^{d+1}$ be a sequence of positive integers for some $d\geq 2$. The \defn{Heawood complex} 
\[
\heawoodComplex{\kk}= \permutahedralTiling{d} / \sublattice{\kk}
\]
is the polytopal complex of $\kk$-equivalent classes of faces of $\permutahedralTiling{d}$. 
In other words,~$\heawoodComplex{\kk}$ is the polytopal complex obtained by identifying faces of $\permutahedralTiling{d}$ up to translation by vectors in $\sublattice{\kk}$.   
A face class is contained in another if there is an element representative of the first that is contained in an element representative of the second.
\end{definition}

\begin{lemma}\label{lem_quotient_size}
The quotient $\lattice{d}/\sublattice{\kk}$ is finite. Its number of elements counts the numbers of full dimensional faces of the Heawood complex~$\heawoodComplex{\kk}$, and is equal to
\[
|\lattice{d}/\sublattice{\kk}| =
\det M_\kk
= \prod (k_i+1) -\prod{k_i}
. 
\]
\end{lemma}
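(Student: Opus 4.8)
The plan is to reduce the statement to the affine version already proved in Lemma~\ref{lem_quotient_size_affine}, using the dilation relations recorded in Equation~\eqref{eq_lattice_affineLattice}. Concretely, I would first observe that the map $\varphi\colon V\to V$ given by $\varphi(v)=(d+1)v$ is a linear bijection which restricts to a bijection $\latticeAffine{d}\to\lattice{d}$, since $\lattice{d}=(d+1)\latticeAffine{d}$ by definition. The same dilation carries the sublattice onto the sublattice, because $\sublattice{\kk}=(d+1)\sublatticeAffine{\kk}$. Hence $\varphi$ is compatible with both lattices and descends to a well-defined group isomorphism
\[
\latticeAffine{d}/\sublatticeAffine{\kk}\;\xrightarrow{\ \sim\ }\;\lattice{d}/\sublattice{\kk}.
\]

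With this isomorphism in hand, finiteness of $\lattice{d}/\sublattice{\kk}$ and the formula for its cardinality follow immediately from Lemma~\ref{lem_quotient_size_affine}: the two quotients have the same number of elements, namely $\det M_\kk=\prod(k_i+1)-\prod k_i$.

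It then remains to identify this count with the number of full-dimensional faces of the Heawood complex $\heawoodComplex{\kk}=\permutahedralTiling{d}/\sublattice{\kk}$. Here I would use that the full-dimensional tiles of the permutahedral tiling are exactly the translates $\perm{d}+v$ with $v\in\lattice{d}$, and that the assignment $v\mapsto\perm{d}+v$ is a bijection between $\lattice{d}$ and the set of tiles; distinct lattice vectors give distinct tiles, since the barycenter of $\perm{d}+v$ is $\operatorname{barycenter}(\perm{d})+v$. Passing to the quotient, two tiles $\perm{d}+v$ and $\perm{d}+v'$ are $\kk$-equivalent precisely when $v-v'\in\sublattice{\kk}$, so the full-dimensional face classes of $\heawoodComplex{\kk}$ correspond bijectively to the cosets of $\lattice{d}/\sublattice{\kk}$. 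Combining the three steps yields all the stated equalities.

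I do not expect a genuine obstacle here, as the lemma is essentially a transport of Lemma~\ref{lem_quotient_size_affine} across the dilation by $(d+1)$. The only points requiring care are verifying that $\varphi$ respects both the ambient lattice and the sublattice (so that it truly descends to the quotient) and confirming that the tile-to-lattice-point correspondence is a genuine bijection rather than merely a surjection; both are routine once the barycenter argument is made explicit.
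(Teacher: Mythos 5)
Your proposal is correct and follows essentially the same route as the paper: the paper likewise deduces the cardinality formula from Lemma~\ref{lem_quotient_size_affine} via the dilation relations in~\eqref{eq_lattice_affineLattice}, and identifies the full-dimensional cells of $\heawoodComplex{\kk}$ with the classes $\perm{d}+v$ for $v\in\lattice{d}/\sublattice{\kk}$. The only difference is presentational — you spell out the induced isomorphism $\latticeAffine{d}/\sublatticeAffine{\kk}\cong\lattice{d}/\sublattice{\kk}$ and the barycenter injectivity argument explicitly, which the paper leaves implicit.
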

\begin{proof}
    The formula is straightforward from~\Cref{lem_quotient_size_affine} and~\eqref{eq_lattice_affineLattice}. 
    Furthermore, recall that the full dimensional cells of the permutahedral tiling $\permutahedralTiling{d}$ are of the form $\perm{d}+v$ for $v\in \lattice{d}$. Thus, the full dimensional cells of the Heawood complex are of the form $\perm{d}+v$ for $v\in \lattice{d}/\sublattice{\kk}$, and the number $|\lattice{d}/\sublattice{\kk}|$ is counting them. 
\end{proof}

Recall that two polytopal complexes are said to be \defn{dual} to each other if their posets of non-empty faces are dual to each other, i.e. there is a bijection between their faces that reverses the containment order. 
The \defn{dual graph} of a simplicial complex is the graph on the set of facets of the complex, whose edges connect two facets that share a codimension 1 face.  

\begin{proposition}\label{prop_duality_heawood_torus}
The following hold:
\begin{enumerate}
    \item The Heawood graph $\heawoodGraph{\kk}$ is the edge graph of the Heawood complex~$\heawoodComplex{\kk}$.\label{prop_duality_heawood_torus_item1}
    \item The Heawood complex and the torus are dual complexes:
    \begin{align*}
        \heawoodComplex{\kk} \cong \torus{\kk}^*.
    \end{align*}
    \label{prop_duality_heawood_torus_item2}
    \item The Heawood graph $\heawoodGraph{\kk}$ is the dual graph of the torus $\torus{\kk}$.\label{prop_duality_heawood_torus_item3}
\end{enumerate}
\end{proposition}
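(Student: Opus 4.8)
The plan is to prove the three statements in order, deriving each from the previous one together with the structural facts already established. The core idea is that all three assertions are instances of the same phenomenon: the duality between the infinite permutahedral tiling $\permutahedralTiling{d}$ and the infinite simplicial complex $\complexAffine{d}$, which was recorded earlier in the excerpt, descends to the quotients because we quotient both complexes by corresponding sublattices. Crucially, by \eqref{eq_lattice_affineLattice} we have $\sublattice{\kk} = (d+1)\sublatticeAffine{\kk}$, so translation by $\sublattice{\kk}$ on the tiling side matches translation by $\sublatticeAffine{\kk}$ on the simplicial side under the dilation identification $w_i = (d+1)\widetilde w_i$. This compatibility is what lets the duality pass to the quotient.

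\emph{Step (1).} First I would show $\heawoodGraph{\kk}$ is the edge graph of $\heawoodComplex{\kk}$. Recall from \Cref{def_heawoodGraph} that $\heawoodGraph{\kk}$ is the quotient of $\affineGraph{d}$ by $\sublattice{\kk}$, where $\affineGraph{d}$ is the graph whose vertices lie in the affine subspace $x_1+\dots+x_{d+1}=1+\dots+(d+1)$ and whose edges are difference vectors $e_j - e_i$. I would observe that $\affineGraph{d}$ is exactly the edge graph (1-skeleton) of the permutahedral tiling $\permutahedralTiling{d}$: the vertices of the tiling are the permutahedron vertices and their translates by $\lattice{d}$, which are precisely the integer points in the affine subspace containing all of $1,\dots,d+1$ mod $(d+1)$; and the edges of each permutahedron tile are exactly the vectors $e_j - e_i$, as described in the subsection on the permutahedron. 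Since both the graph and the complex are quotiented by the same lattice $\sublattice{\kk}$, the edge graph of $\heawoodComplex{\kk}=\permutahedralTiling{d}/\sublattice{\kk}$ is $\affineGraph{d}/\sublattice{\kk} = \heawoodGraph{\kk}$.

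\emph{Step (2).} Next I would establish $\heawoodComplex{\kk} \cong \torus{\kk}^*$. The poset of nonempty faces of $\complexAffine{d}$ is dual to that of $\permutahedralTiling{d}$ (stated earlier, with the explicit correspondence $v \mapsto \perm{v}$ and faces containing $v$ dual to faces of $\perm{v}$). This face-poset bijection is equivariant with respect to translations: translating a vertex $v$ of $\complexAffine{d}$ by $\widetilde w\in \latticeAffine{d}$ corresponds to translating $\perm{v}$ by $(d+1)\widetilde w \in \lattice{d}$. Hence the duality is equivariant for the action of $\sublatticeAffine{\kk}$ on $\complexAffine{d}$ and the action of $\sublattice{\kk}=(d+1)\sublatticeAffine{\kk}$ on $\permutahedralTiling{d}$. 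An equivariant order-reversing bijection of face posets descends to an order-reversing bijection on the quotient posets, giving $\heawoodComplex{\kk} = \permutahedralTiling{d}/\sublattice{\kk} \cong (\complexAffine{d}/\sublatticeAffine{\kk})^* = \torus{\kk}^*$. I expect this equivariance/descent argument to be the main obstacle, since one must check that $\kk$-equivalence classes of faces correspond bijectively under the duality and that containment is reversed at the level of classes; the containment convention for classes is spelled out in \Cref{def_torus} and \Cref{def_heawood_complex}, so I would verify the duality respects exactly that convention.

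\emph{Step (3).} Finally, statement (3) follows by combining (1) and (2). By definition, the dual graph of a simplicial complex has the facets as vertices, with edges joining facets sharing a codimension-1 face. Under the duality $\heawoodComplex{\kk}\cong\torus{\kk}^*$ of (2), the facets (top-dimensional faces) of $\torus{\kk}$ correspond to the vertices of $\heawoodComplex{\kk}$, and two facets of $\torus{\kk}$ share a codimension-1 face exactly when the corresponding vertices of $\heawoodComplex{\kk}$ are joined by an edge. Thus the dual graph of $\torus{\kk}$ is the edge graph of $\heawoodComplex{\kk}$, which by (1) is $\heawoodGraph{\kk}$. This completes the proof.
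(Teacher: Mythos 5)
Your proposal is correct and takes essentially the same approach as the paper: item (1) by unwinding the definitions of $\heawoodGraph{\kk}$ and $\heawoodComplex{\kk}$ as quotients by the same lattice, item (2) via the translation-equivariant duality map between $\complexAffine{d}$ and $\permutahedralTiling{d}$ (satisfying $\phi(F+\widetilde v)=\phi(F)+(d+1)\widetilde v$, so that $\sublattice{\kk}=(d+1)\sublatticeAffine{\kk}$ makes it descend to an order-reversing bijection of the quotient face posets), and item (3) by combining the first two. The only difference is cosmetic: you spell out items (1) and (3) more explicitly than the paper, which disposes of them in one line each.
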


\begin{proof}
    Item~\eqref{prop_duality_heawood_torus_item1} follows by definition. 
    
    For Item~\eqref{prop_duality_heawood_torus_item2}, recall that $\permutahedralTiling{d}$ and $\complexAffine{d}$ are dual complexes. The vertices of $\complexAffine{d}$ are the elements $\widetilde v \in \latticeAffine{d}$. Such a vertex $\widetilde v$ is dual to maximal dimensional cell  $\perm{v}\in\permutahedralTiling{d}$ given by 
    \[
    \perm{v}=\perm{d} + v,
    \]
    where $v=(d+1)\widetilde v\in \lattice{d}$. 
    We denote by $\phi:\complexAffine{d}\rightarrow \permutahedralTiling{d}$ the containment reversing map that sends a face $F\in \complexAffine{d}$ to its dual face $\B\in \permutahedralTiling{d}$. This map satisfies 
    \[
    \phi(F+\widetilde v) = \phi(F) +v = \phi(F) +(d+1)\widetilde v.
    \]

    Since $\sublattice{\kk}=(d+1)\sublatticeAffine{\kk}$, then  $\phi$ sends the $\kk$-equivalent class of $F\in \complexAffine{d}$ to the $\kk$-equivalent class of $\B \in \permutahedralTiling{d}$. 
    Thus, $\phi$ induces a map between the faces of the quotients $\torus{\kk}=\complexAffine{d}/\sublatticeAffine{\kk}$ and $\heawoodComplex{\kk}=\permutahedralTiling{d}/\sublattice{\kk}$. 
    This ``quotient map" is a well defined bijection that reverses the containment order. This finishes our proof of Item~\eqref{prop_duality_heawood_torus_item2}.

    Item~\eqref{prop_duality_heawood_torus_item3} is a consequence of Items~\eqref{prop_duality_heawood_torus_item1} and~\eqref{prop_duality_heawood_torus_item2}.
\end{proof}

\subsubsection{The fundamental tile and the permutahedron domain}
As our figures suggest, the Heawood complex $\heawoodComplex{\kk}$ can be obtained by making some identifications on the boundary of a finite fundamental piece of the infinite permutahedral tiling. 
There are several ways to choose such a fundamental piece. 
In this section, we present two natural choices, the fundamental tile and the permutahedron domain. 

Our first choice is related to the fundamental vector representatives from~\Cref{def_fundamentalvectors}.
We define 
\begin{align}\label{eq_fundamentalvectors}
\fundamentalvectors{\kk} = 
\left\{a_1 w_1+\dots + a_{d+1} w_{d+1} :
\begin{array}{l}
    \ 0 \leq a_i \leq k_i\in \Z^{d+1}  \\ 
    \text{at least one } a_i = 0
\end{array}
\right\}.
\end{align}

Since $w_i=(d+1)\widetilde w_i$, then the vectors in $\fundamentalvectors{\kk}$ are just the fundamental vectors in~$\fundamentalvectorsAffine{\kk}$ from~\Cref{def_fundamentalvectors}, dilated by a factor of $d+1$. 
Similarly as in~\Cref{lem_quotient_size_affine}, we have 

\begin{lemma}\label{lem_fundamentalvector_representatives}
The vectors in $\fundamentalvectors{\kk}$ are element representatives of the classes of~$\lattice{d}/\sublattice{\kk}$. 
\end{lemma}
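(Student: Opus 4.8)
The plan is to reduce this statement to Lemma~\ref{lem_quotient_size_affine} via the dilation map, exploiting the three relations recorded in~\eqref{eq_lattice_affineLattice}. First I would introduce the linear map $\mu\colon V\to V$ given by scalar multiplication by $d+1$, that is $\mu(x)=(d+1)x$. This is a linear automorphism of $V$, and in particular a bijection.

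Next I would verify that $\mu$ carries each of the relevant objects on the ``tilde'' side to its unadorned counterpart. Because $w_i=(d+1)\widetilde w_i$, an arbitrary integer combination $a_1\widetilde w_1+\dots+a_{d+1}\widetilde w_{d+1}$ is sent to $a_1 w_1+\dots+a_{d+1} w_{d+1}$ with the \emph{same} coefficient vector $(a_1,\dots,a_{d+1})$. From this and the definitions it follows immediately that $\mu(\latticeAffine{d})=\lattice{d}$, that $\mu(\sublatticeAffine{\kk})=\sublattice{\kk}$ (both sublattices are cut out by the identical condition that $(a_1,\dots,a_{d+1})$ lie in the integer row span of $M_\kk$), and that $\mu(\fundamentalvectorsAffine{\kk})=\fundamentalvectors{\kk}$ (the defining constraints $0\le a_i\le k_i$ and ``at least one $a_i=0$'' are conditions on the coefficient vector only, hence preserved verbatim). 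This last identity is exactly the remark made just before the lemma, namely that $\fundamentalvectors{\kk}$ is $\fundamentalvectorsAffine{\kk}$ dilated by the factor $d+1$.

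Then I would observe that, since $\mu$ is a bijection sending $\sublatticeAffine{\kk}$ onto $\sublattice{\kk}$, it respects the two equivalence relations: for $x,y\in\latticeAffine{d}$ one has $x\equivalent{\kk} y$ if and only if $\mu(x)\equivalent{\kk}\mu(y)$. Hence $\mu$ descends to a well-defined bijection $\overline\mu\colon \latticeAffine{d}/\sublatticeAffine{\kk}\to \lattice{d}/\sublattice{\kk}$. Applying $\overline\mu$ to the conclusion of Lemma~\ref{lem_quotient_size_affine}, which states that $\fundamentalvectorsAffine{\kk}$ contains exactly one representative of each class of $\latticeAffine{d}/\sublatticeAffine{\kk}$, shows that its image $\fundamentalvectors{\kk}$ contains exactly one representative of each class of $\lattice{d}/\sublattice{\kk}$, which is the assertion.

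I do not expect a genuine obstacle here, as all of the real work was already carried out in Lemma~\ref{lem_quotient_size_affine}; the only point requiring any care is the simultaneous bookkeeping of the three set-equalities under $\mu$, and in particular checking that the equivalence relation downstairs is pulled back correctly so that $\overline\mu$ is genuinely well defined and bijective rather than merely surjective.
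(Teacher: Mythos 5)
Your proof is correct and is essentially the paper's own argument: the paper's proof of this lemma simply cites Lemma~\ref{lem_quotient_size_affine} together with the relations in~\eqref{eq_lattice_affineLattice}, which is precisely the dilation-by-$(d+1)$ transfer you carry out. Your version just makes explicit the routine verifications (that $\mu$ maps lattice to lattice, sublattice to sublattice, fundamental vectors to fundamental vectors, and descends to a bijection of quotients) that the paper leaves implicit.
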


\begin{proof}
This follows directly from~\Cref{lem_quotient_size_affine} and~\eqref{eq_lattice_affineLattice}.
\end{proof}

\begin{definition}
The \defn{fundamental tile} $\fundamentaltile{\kk}$ is the union of the permutahedra of the form $\perm{d}+v$ with $v\in \fundamentalvectors{\kk}$.
\end{definition}

Several examples of the fundamental tiles (including some translations or expansions) are illustrated in Figures~\ref{fig_figure111},~\ref{fig_fundamentaltile_translates},~\ref{fig_figureFour1111},~\ref{fig_figure1111expand} and~\ref{fig_figure2223expand}.
 See also Figure~\ref{fig_fundamental_tile_and_permutahedron_domain} (left).

\begin{figure}[htb]
\centering
    \begin{tabular}{cc}
    \begin{tikzpicture}[scale=0.5]
    \def\con{0.8660254};
    \draw[fill=orange] (0*\con,0) -- (0*\con,1) -- (1*\con,1.5) -- (2*\con,1) -- (2*\con,0) -- (1*\con,-0.5) -- (0*\con,0);
    \draw[fill=skyblue] (2*\con,0) -- (2*\con,1) -- (3*\con,1.5) -- (4*\con,1) -- (4*\con,0) -- (3*\con,-0.5) -- (2*\con,0);
    \draw[fill=bluishgreen] (-2*\con,0) -- (-2*\con,1) -- (-1*\con,1.5) -- (0*\con,1) -- (0*\con,0) -- (-1*\con,-0.5) -- (-2*\con,0);
    \draw[fill=yellow] (1*\con,1.5) -- (1*\con,2.5) -- (2*\con,3) -- (3*\con,2.5) -- (3*\con,1.5) -- (2*\con,1) -- (1*\con,1.5);
    \draw[fill=red] (1*\con,1.5) -- (1*\con,2.5) -- (0*\con,3) -- (-1*\con,2.5) -- (-1*\con,1.5) -- (0*\con,1) -- (1*\con,1.5);
    \draw[fill=blue] (1*\con,-1.5) -- (1*\con,-0.5) -- (2*\con,0) -- (3*\con,-0.5) -- (3*\con,-1.5) -- (2*\con,-2) -- (1*\con,-1.5);
    \draw[fill=purple] (1*\con,-1.5) -- (1*\con,-0.5) -- (0*\con,0) -- (-1*\con,-0.5) -- (-1*\con,-1.5) -- (0*\con,-2) -- (1*\con,-1.5);
    \draw[fill=olive] (-1*\con,1.5) -- (-1*\con,2.5) -- (-2*\con,3) -- (-3*\con,2.5) -- (-3*\con,1.5) -- (-2*\con,1) -- (-1*\con,1.5);
    \draw[fill=indigo] (-4*\con,0) -- (-4*\con,1) -- (-3*\con,1.5) -- (-2*\con,1) -- (-2*\con,0) -- (-3*\con,-0.5) -- (-4*\con,0);
    \draw[fill=magenta] (-1*\con,-1.5) -- (-1*\con,-0.5) -- (-2*\con,0) -- (-3*\con,-0.5) -- (-3*\con,-1.5) -- (-2*\con,-2) -- (-1*\con,-1.5);
    \draw[fill=teal] (0*\con,3) -- (0*\con,4) -- (1*\con,4.5) -- (2*\con,4) -- (2*\con,3) -- (1*\con,2.5) -- (0*\con,3);
    \draw[fill=turquoise] (0*\con,-3) -- (0*\con,-2) -- (1*\con,-1.5) -- (2*\con,-2) -- (2*\con,-3) -- (1*\con,-3.5) -- (0*\con,-3);
    \draw[fill=lightblue] (-2*\con,3) -- (-2*\con,4) -- (-1*\con,4.5) -- (0*\con,4) -- (0*\con,3) -- (-1*\con,2.5) -- (-2*\con,3);
    \draw[fill=indigo2] (-2*\con,-3) -- (-2*\con,-2) -- (-1*\con,-1.5) -- (0*\con,-2) -- (0*\con,-3) -- (-1*\con,-3.5) -- (-2*\con,-3);
    \draw[fill=blue2] (-3*\con,-1.5) -- (-3*\con,-0.5) -- (-4*\con,0) -- (-5*\con,-0.5) -- (-5*\con,-1.5) -- (-4*\con,-2) -- (-3*\con,-1.5);
    \draw[fill=green] (-4*\con,-3) -- (-4*\con,-2) -- (-3*\con,-1.5) -- (-2*\con,-2) -- (-2*\con,-3) -- (-3*\con,-3.5) -- (-4*\con,-3);
    \draw[fill=purple2] (-1*\con,-4.5) -- (-1*\con,-3.5) -- (-2*\con,-3) -- (-3*\con,-3.5) -- (-3*\con,-4.5) -- (-2*\con,-5) -- (-1*\con,-4.5);
    \draw[fill=brown] (1*\con,-4.5) -- (1*\con,-3.5) -- (0*\con,-3) -- (-1*\con,-3.5) -- (-1*\con,-4.5) -- (0*\con,-5) -- (1*\con,-4.5);
    \end{tikzpicture}
    &

    \begin{tikzpicture}[scale=0.5]
    \def\con{0.8660254};
    \draw[fill=orange] (0*\con,0) -- (0*\con,1) -- (1*\con,1.5) -- (2*\con,1) -- (2*\con,0) -- (1*\con,-0.5) -- (0*\con,0);
    \draw[fill=skyblue] (2*\con,0) -- (2*\con,1) -- (3*\con,1.5) -- (4*\con,1) -- (4*\con,0) -- (3*\con,-0.5) -- (2*\con,0);
    \draw[fill=bluishgreen] (-2*\con,0) -- (-2*\con,1) -- (-1*\con,1.5) -- (0*\con,1) -- (0*\con,0) -- (-1*\con,-0.5) -- (-2*\con,0);
    \draw[fill=yellow] (1*\con,1.5) -- (1*\con,2.5) -- (2*\con,3) -- (3*\con,2.5) -- (3*\con,1.5) -- (2*\con,1) -- (1*\con,1.5);
    \draw[fill=red] (1*\con,1.5) -- (1*\con,2.5) -- (0*\con,3) -- (-1*\con,2.5) -- (-1*\con,1.5) -- (0*\con,1) -- (1*\con,1.5);
    \draw[fill=blue] (1*\con,-1.5) -- (1*\con,-0.5) -- (2*\con,0) -- (3*\con,-0.5) -- (3*\con,-1.5) -- (2*\con,-2) -- (1*\con,-1.5);
    \draw[fill=purple] (1*\con,-1.5) -- (1*\con,-0.5) -- (0*\con,0) -- (-1*\con,-0.5) -- (-1*\con,-1.5) -- (0*\con,-2) -- (1*\con,-1.5);
    \draw[fill=olive] (-1*\con,1.5) -- (-1*\con,2.5) -- (-2*\con,3) -- (-3*\con,2.5) -- (-3*\con,1.5) -- (-2*\con,1) -- (-1*\con,1.5);
    \draw[fill=indigo] (-4*\con,0) -- (-4*\con,1) -- (-3*\con,1.5) -- (-2*\con,1) -- (-2*\con,0) -- (-3*\con,-0.5) -- (-4*\con,0);
    \draw[fill=magenta] (-1*\con,-1.5) -- (-1*\con,-0.5) -- (-2*\con,0) -- (-3*\con,-0.5) -- (-3*\con,-1.5) -- (-2*\con,-2) -- (-1*\con,-1.5);
    \draw[fill=teal] (0*\con,3) -- (0*\con,4) -- (1*\con,4.5) -- (2*\con,4) -- (2*\con,3) -- (1*\con,2.5) -- (0*\con,3);
    \draw[fill=turquoise] (0*\con,-3) -- (0*\con,-2) -- (1*\con,-1.5) -- (2*\con,-2) -- (2*\con,-3) -- (1*\con,-3.5) -- (0*\con,-3);
    \draw[fill=lightblue] (-2*\con,3) -- (-2*\con,4) -- (-1*\con,4.5) -- (0*\con,4) -- (0*\con,3) -- (-1*\con,2.5) -- (-2*\con,3);
    \draw[fill=indigo2] (-2*\con,-3) -- (-2*\con,-2) -- (-1*\con,-1.5) -- (0*\con,-2) -- (0*\con,-3) -- (-1*\con,-3.5) -- (-2*\con,-3);
    \draw[fill=blue2] (-3*\con,-1.5) -- (-3*\con,-0.5) -- (-4*\con,0) -- (-5*\con,-0.5) -- (-5*\con,-1.5) -- (-4*\con,-2) -- (-3*\con,-1.5);
    \draw[fill=green] (-4*\con,-3) -- (-4*\con,-2) -- (-3*\con,-1.5) -- (-2*\con,-2) -- (-2*\con,-3) -- (-3*\con,-3.5) -- (-4*\con,-3);
    \draw[fill=purple2] (-1*\con,-4.5) -- (-1*\con,-3.5) -- (-2*\con,-3) -- (-3*\con,-3.5) -- (-3*\con,-4.5) -- (-2*\con,-5) -- (-1*\con,-4.5);
    \draw[fill=brown] (1*\con,-4.5) -- (1*\con,-3.5) -- (0*\con,-3) -- (-1*\con,-3.5) -- (-1*\con,-4.5) -- (0*\con,-5) -- (1*\con,-4.5);

    \begin{scope}[shift={(6*\con,6)}]
    \draw[fill=blue2] (-4*\con,-1) --  (-5*\con,-1) -- (-5*\con,-1.5) -- (-4*\con,-2) -- (-4*\con,-1);
    \draw[fill=green] (-4*\con,-3) -- (-4*\con,-2) -- (-3*\con,-3.5) -- (-4*\con,-3);
    \draw[fill=purple2] (-3*\con,-3.5) -- (-3*\con,-4.5) -- (-2*\con,-5) -- (-3*\con,-3.5);
    \end{scope}

    \begin{scope}[shift={(-6*\con,-6)}]
    \draw[fill=teal] (1*\con,4.5) -- (2*\con,4) -- (2*\con,3) -- (1*\con,4.5);
    \draw[fill=yellow] (2*\con,3) -- (3*\con,2.5) -- (3*\con,1.5) -- (2*\con,3);
    \draw[fill=skyblue] (3*\con,1.5) -- (4*\con,1) -- (4*\con,0.5) -- (3*\con,0.5) -- (3*\con,1.5);
    \end{scope}

    \begin{scope}[shift={(0*\con,-9)}]
    \draw[fill=lightblue] (-2*\con,3.5) -- (-2*\con,4) -- (-1*\con,4.5) -- (0*\con,4) -- (-2*\con,3.5);
    \end{scope}
    
    \begin{scope}[shift={(0*\con,9)}]
    \draw[fill=brown] (1*\con,-4.5) -- (1*\con,-4) -- (-1*\con,-4.5) -- (0*\con,-5) -- (1*\con,-4.5);
    \end{scope}

    \begin{scope}[shift={(6*\con,-3)}]
    \draw[fill=blue2] (-4*\con,0) -- (-5*\con,-0.5) -- (-5*\con,-1.5) -- (-4*\con,-1.5) -- (-4*\con,0);
    \draw[fill=indigo] (-4*\con,0) -- (-4*\con,1) -- (-3*\con,1.5) -- (-4*\con,0);
    \end{scope}
    
    \begin{scope}[shift={(-6*\con,3)}]
    \draw[fill=skyblue] (3*\con,1) -- (4*\con,1) -- (4*\con,0) -- (3*\con,-0.5) -- (3*\con,1);
    \draw[fill=blue] (2*\con,-2)  -- (3*\con,-0.5) -- (3*\con,-1.5) -- (2*\con,-2);
    \end{scope}
    
    \draw (3.5*\con,0.75) --(1.5*\con,4.75) --(-2.5*\con,3.75) --(-4.5*\con,-1.25) -- (-2.5*\con,-5.25) --(1.5*\con,-4.25) --   (3.5*\con,0.75);

    \draw[draw=white, fill=white] (3.5*\con,0.75) --(1.5*\con,4.75) -- (0,5.1) -- (4.1*\con,5.1) -- (4.1*\con,0) -- cycle;

    \draw[draw=white, fill=white]  (1*\con,5.1) -- (1.5*\con,4.75) --(-2.5*\con,3.75) -- (-4*\con, 3) -- (-4*\con,5.1) -- cycle;

    \draw[draw=white, fill=white] (-2.5*\con,3.75) --(-4.5*\con,-1.25) -- (-5.1*\con,-1) -- (-5.1*\con,4) -- cycle;

    \draw[draw=white, fill=white] (-5.1*\con,1) -- (-4.5*\con,-1.25) -- (-2.5*\con,-5.25)-- (-2*\con,-5.6)-- (-5.1*\con,-5.6) -- cycle;
    \draw[draw=white, fill=white] (-3*\con,-5.6) -- (-2.5*\con,-5.25) --(1.5*\con,-4.25)-- (3*\con,-4) -- (3*\con,-5.6) -- cycle;
    \draw[draw=white, fill=white] (4.1*\con,-5) -- (1.5*\con,-4.25) --   (3.5*\con,0.75)-- (4.1*\con,1) --cycle;
    \end{tikzpicture}
    \end{tabular}
\caption{The fundamental tile and the permutahedron domain of~$\heawoodComplex{(3,1,2)}$.}
\label{fig_fundamental_tile_and_permutahedron_domain}
\end{figure}

As an observation, note that the translations of the form $\fundamentaltile{\kk}+v$ for $v\in \sublattice{\kk}$ tile the affine space containing $\perm{d}$. 
Also, the faces of the infinite permutahedral tiling restricted to $\fundamentaltile{\kk}$ give a finite representation of the Heawood complex. Withing this finite part, only faces in the boundary of $\fundamentaltile{\kk}$ are identified by translation of elements in~$\sublattice{\kk}$. Such identification can be interpreted according to the way the translations of the fundamental tile $\fundamentaltile{\kk}$ by elements $v\in \sublattice{\kk}$ glue together. 

\medskip
Our second choice for a fundamental domain for the Heawood complex $\heawoodComplex{\kk}$ is a permutahedral domain, which is similar to the permutahedral domain of $\torus{\kk}$ in~\Cref{sec_permutahedrondomain_affine}. 
For $i\in [d+1]$, we denote by $p_i$ the vector
\begin{align}
p_i=(d+1)\widetilde p_i = 
m_{i,1}  w_1 + m_{i,2}  w_2 + \dots + m_{i,d+1}  w_{d+1},
\end{align}
where $(m_{i,1},\dots,m_{i,d+1})$ is the $i$th row of the matrix $M_\kk$.

\begin{definition}\label{def_permutahedrondomain}
The \defn{permutahedron domain} of $\heawoodComplex{\kk}$ is
\begin{align}
\permutahedronDomain{\kk} = 
\frac{1}{d+1}
\conv\left \{
(a_1,\dots,a_{d+1})_{p}
: \,
\{a_1,\dots,a_{d+1}\}=[d+1]
\right \},
\end{align}
where
\begin{align}
(a_1,\dots,a_{d+1})_{p} =
a_1 p_1+\dots+a_{d+1}p_{d+1}.
\end{align}
\end{definition}

As the name suggest, the permutahedron domain is a permutahedron. It is a fundamental domain of the Heawood complex where its opposite facets are identified by translation.

An example of the permutahedron domain $\permutahedronDomain{(3,1,2)}$ of $\heawoodComplex{(3,1,2)}$ is illustrated in Figure~\ref{fig_fundamental_tile_and_permutahedron_domain}~(right). A three dimensional example for $\heawoodComplex{(1,1,1,1)}$ is shown in~\Cref{fig_permutahedraltiling3D}~(right). 

We remark that the gray hexagons in~\Cref{fig_heawoods_dimension2} show alternative permutahedron domains for the examples in that figure. Note that the permutahedron domain from~\Cref{def_permutahedrondomain} is not necessarily equal to those. For instance, compare the gray hexagon in~\Cref{fig_heawoods_dimension2_e} with the permutahedral domain in~\Cref{fig_fundamental_tile_and_permutahedron_domain} (right); they are not exactly equal because the vertices of the permutahedral domain $\permutahedronDomain{\kk}$ are not necessarily vertices of the Heawood complex (they might be slightly inside the hexagons). However, one can be obtained from the other by a slight surgery.  

Although our definition of permutahedral domain in~\Cref{def_permutahedrondomain} is elegant and simple, it would also be interesting to have a modification of it whose vertices belong to the Heawood complex. We leave this as an open problem for the interested readers. 

\begin{question}
Is there a fundamental domain of the Heawood complex $\heawoodComplex{\kk}$ which is a permutahedron (or another nice polytope) and whose vertices are vertices of the fundamental tile~$\fundamentaltile{\kk}$?
\end{question}

As a motivating example, it would be nice to find vertices of the fundamental tile~$\fundamentaltile{(1,1,1,1)}$ in~\Cref{fig_permutahedraltiling3D}, whose convex hull is a permutahedron domain of~$\heawoodComplex{(1,1,1,1)}$. Note that the vertices of our permutahedron domain $\permutahedronDomain{(1,1,1,1)}$ presented on the right of~\Cref{fig_permutahedraltiling3D} are not vertices of the fundamental tile.  

\section{Proof of Theorems~\ref{thm_maintheorem} and~\ref{thm_heawoodgraph}}
\subsection{Proof of Theorem~\ref{thm_maintheorem}}
Let $\kk=(k_1,\dots,k_{d+1})\in \N^{d+1}$ for some $d\geq 2$ as above.
We already proved that the Heawood graph $\heawoodGraph{\kk}$ is the dual graph of the $d$-dimensional torus $\torus{\kk}$ in~\Cref{prop_duality_heawood_torus}~\eqref{prop_duality_heawood_torus_item3}. 
It remains to show that its $f$-vector $f(\torus{\kk})=(f_0,f_1,\dots ,f_d)$ is determined by
\begin{align*}
f_i=i! \, \stirling{d+1}{i+1} \, D_{\kk},    
\end{align*}
where $\stirling{n}{k}$ is the Stirling number of the second kind, counting the number of ways to partition a set of $n$ objects into $k$ non-empty subsets, and $D_\kk=\det M_\kk$.
By the duality in~\Cref{prop_duality_heawood_torus}~\eqref{prop_duality_heawood_torus_item2}, this is equivalent to the following proposition.

\begin{proposition}\label{prop_counting_faces}
    The number of codimension $i$ faces of the Heawood complex $\heawoodComplex{\kk}$ is equal to
    \begin{align*}
    i! \, \stirling{d+1}{i+1} \, D_{\kk}.    
\end{align*}
\end{proposition}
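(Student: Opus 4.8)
The plan is to prove \Cref{prop_counting_faces} by a double-counting argument over incidences between the full-dimensional cells of the Heawood complex $\heawoodComplex{\kk}$ and their codimension $i$ faces, using the duality with the triangulated torus in \Cref{prop_duality_heawood_torus} to control how many cells contain a given face. First I would record the two per-cell ingredients. By \Cref{lem_quotient_size}, the complex $\heawoodComplex{\kk}$ has exactly $D_\kk$ full-dimensional cells, each a copy of the permutahedron $\perm{d}$. Since the non-empty faces of $\perm{d}$ are labeled by ordered partitions of $[d+1]$, with an ordered partition into $k$ blocks labeling a face of dimension $d+1-k$, the codimension $i$ faces of a single cell are exactly those labeled by ordered partitions of $[d+1]$ into $i+1$ blocks. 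Ordering the blocks of an unordered set partition into $i+1$ parts in every possible way shows there are $(i+1)!\,\stirling{d+1}{i+1}$ such ordered partitions, hence $(i+1)!\,\stirling{d+1}{i+1}$ codimension $i$ faces in each cell.

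Next I would count, for a fixed codimension $i$ face $F$ of $\heawoodComplex{\kk}$, the number of full cells containing it. Under the duality $\heawoodComplex{\kk}\cong\torus{\kk}^{*}$ of \Cref{prop_duality_heawood_torus}\eqref{prop_duality_heawood_torus_item2}, the face $F$ corresponds to an $i$-dimensional face $\sigma$ of the triangulated torus $\torus{\kk}$, the full cells of $\heawoodComplex{\kk}$ correspond to the vertices of $\torus{\kk}$, and containment is reversed. Thus the full cells containing $F$ are in bijection with the vertices of $\sigma$. Because $\torus{\kk}$ is a genuine simplicial complex whenever $k_i\ge 1$ (\Cref{prop_torus}), the face $\sigma$ has exactly $i+1$ distinct vertices and each incidence occurs with multiplicity one; hence $F$ lies in exactly $i+1$ full cells.

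Combining these, I would count incident pairs $(C,F)$, with $C$ a full cell and $F$ a codimension $i$ face of $C$, in two ways. Summing over cells gives $D_\kk\cdot(i+1)!\,\stirling{d+1}{i+1}$, while summing over faces gives $(i+1)\cdot N_i$, where $N_i$ denotes the number of codimension $i$ faces of $\heawoodComplex{\kk}$. Equating the two expressions and dividing by $i+1$ yields
\[
N_i=i!\,\stirling{d+1}{i+1}\,D_\kk,
\]
which is the claim. As consistency checks, $i=0$ recovers the $D_\kk$ full cells of \Cref{lem_quotient_size}, and $i=d$ recovers $d!\,D_\kk$ vertices, matching the vertex count of the Heawood graph in \Cref{thm_heawoodgraph}.

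The step requiring the most care is the incidence count of the second paragraph: one must be certain that every codimension $i$ face sits in exactly $i+1$ cells and that each incidence has multiplicity one, so that the double count really sees a clean copy of the face poset of $\perm{d}$ inside each cell. This is exactly where the hypothesis $k_i\ge 1$ enters, since by \Cref{prop_torus} it guarantees that $\torus{\kk}$ is an honest simplicial complex (no repeated vertex within a simplex and no self-identified faces), whose dual $\heawoodComplex{\kk}$ therefore has embedded permutahedral cells with the expected local incidence structure. An alternative route, if one prefers to avoid the duality, is to identify the $\lattice{d}$-orbits of codimension $i$ faces of $\permutahedralTiling{d}$ with cyclic classes of ordered partitions into $i+1$ blocks (each class having exactly $i+1$ representatives, since the blocks are distinct), and then multiply by the index $D_\kk=[\lattice{d}:\sublattice{\kk}]$; the delicate point there is the injectivity (no two cyclic classes merge), which the double-counting argument circumvents entirely.
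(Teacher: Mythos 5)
Your proof is correct, but it takes a genuinely different route from the paper's. The paper classifies the faces of $\heawoodComplex{\kk}$ directly: using the two rotation lemmas (\Cref{lem_rotation_partition_one,lem_rotation_partition_two}) it shows that every codimension $i$ face of the permutahedral tiling is \emph{uniquely} a translate $[B_1,\dots,B_{i+1}]+v$ with $1\in B_1$ and $v\in\lattice{d}$, which descends to the quotient (\Cref{cor_faces_HeawoodComplex}); counting the $i!\,\stirling{d+1}{i+1}$ ordered partitions with $1\in B_1$ and the $D_\kk$ classes of $v$ from \Cref{lem_quotient_size} gives the formula. This is essentially the ``alternative route'' you sketch at the end: anchoring the block containing $1$ is exactly how the paper resolves the injectivity worry you raise, by picking a canonical representative of each cyclic class. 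Your double count instead trades the Second Rotation Lemma for the duality of \Cref{prop_duality_heawood_torus} together with the simpliciality of $\torus{\kk}$ (\Cref{prop_torus}): each of the $D_\kk$ cells contributes $(i+1)!\,\stirling{d+1}{i+1}$ codimension $i$ faces, each face lies in $i+1$ cells, and dividing gives $i!\,\stirling{d+1}{i+1}\,D_\kk$. The one step you should spell out is the multiplicity-one claim: if $\widetilde\sigma$ and $\widetilde\sigma+u$ with $u\in\sublatticeAffine{\kk}$ are representatives of the same $i$-simplex class both containing a fixed vertex $\widetilde v$, then $\widetilde\sigma$ contains the $\kk$-equivalent vertices $\widetilde v$ and $\widetilde v-u$, forcing $u=0$ by the no-repeated-vertex property guaranteed by $k_i\ge 1$ in \Cref{prop_torus}; the same one-line argument shows distinct faces of a single cell remain distinct in the quotient, which your per-cell count also tacitly uses. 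As for what each approach buys: yours is shorter and bypasses the somewhat delicate Second Rotation Lemma, at the cost of genuinely needing $k_i\ge 1$ through \Cref{prop_torus}; the paper's normal form costs the rotation lemmas but yields the explicit face parametrization of \Cref{cor_faces_HeawoodComplex}, and the rotation machinery is reused elsewhere, e.g.\ in the facet identifications \eqref{eq_facettranslation_permutahedrondomain} of the permutahedron domain.
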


In this section, we concentrate on proving this proposition.
The following rotation lemmas on faces of the permutahedron will be useful. The reader is invited to verify examples in \Cref{fig_perm_2D}.

\begin{lemma}[First Rotation Lemma]\label{lem_rotation_partition_one}
    The faces of the permutahedron $\perm{d}$ satisfy
    \[
    [B_2,\dots,B_k,B_1] = [B_1,B_2,\dots, B_k] + \sum_{b\in B_1}w_b.
    \]
\end{lemma}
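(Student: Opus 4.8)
The plan is to work directly with the vertex description of the faces of $\perm{d}$, rather than with the facet inequalities. First I would write the translation vector out explicitly. Setting $b_i := |B_1|+\dots+|B_i|$ and $b_0:=0$, and using $w_b=(d+1)e_b-\sum_{j=1}^{d+1}e_j$, one computes
\[
\sum_{b\in B_1} w_b = (d+1)\sum_{b\in B_1}e_b - b_1\sum_{j=1}^{d+1}e_j,
\]
which is the vector $v$ whose $a$-th coordinate equals $(d+1)-b_1$ if $a\in B_1$ and equals $-b_1$ otherwise. A one-line check shows the coordinates of $v$ sum to $b_1\big((d+1)-b_1\big)+(d+1-b_1)(-b_1)=0$, consistent with the fact that both faces lie in the hyperplane $x_1+\dots+x_{d+1}=1+\dots+(d+1)$.

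Next I would recall that, by the vertex description given above, a vertex of $[B_1,\dots,B_k]$ is a permutation $\x$ of $[d+1]$ with $\{x_a\}_{a\in B_i}=\{b_{i-1}+1,\dots,b_i\}$ for each $i$. The heart of the argument is to add $v$ to such a vertex and track the value sets block by block. For $a\in B_1$ we have $x_a\in\{1,\dots,b_1\}$, so $(\x+v)_a=x_a+(d+1)-b_1\in\{(d+1)-b_1+1,\dots,d+1\}$; thus after translation the block $B_1$ carries the largest $b_1$ values, exactly as required of the final block of $[B_2,\dots,B_k,B_1]$. For $a\in B_i$ with $i\ge 2$ we have $(\x+v)_a=x_a-b_1\in\{(b_{i-1}-b_1)+1,\dots,b_i-b_1\}$, and since $b_i-b_1=|B_2|+\dots+|B_i|$, these are precisely the consecutive values assigned to $B_i$ in its new position $i-1$ within the rotated ordered partition. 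Hence $\x+v$ is a vertex of $[B_2,\dots,B_k,B_1]$.

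To finish, I would observe that $\x\mapsto\x+v$ is injective and maps $\vertices([B_1,\dots,B_k])$ into $\vertices([B_2,\dots,B_k,B_1])$; since both vertex sets have the same cardinality $\prod_i|B_i|!$ (the blocks are the same sets, merely reordered), the map is a bijection between them. Because a polytope is the convex hull of its vertices and translation commutes with $\conv$, this gives
\[
[B_1,\dots,B_k]+\sum_{b\in B_1}w_b=\conv\big(\vertices([B_1,\dots,B_k])+v\big)=[B_2,\dots,B_k,B_1],
\]
which is the claim.

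The only genuinely delicate point is the index bookkeeping in the middle step: one must verify that the cyclic shift simultaneously sends the smallest-value block $B_1$ to the largest-value slot while every remaining block $B_i$ slides its assigned value range down by exactly $b_1=|B_1|$. Everything else is routine once $v$ is written out, and I expect no convexity or topological subtleties, since the statement reduces to a bijection of finite vertex sets.
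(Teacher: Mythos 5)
Your proposal is correct and follows essentially the same route as the paper's proof: both write out $v=\sum_{b\in B_1}w_b$ coordinate-wise as $(d+1)-|B_1|$ on $B_1$ and $-|B_1|$ elsewhere, and then track block-by-block how the value sets $\{b_{i-1}+1,\dots,b_i\}$ shift, showing $B_1$ moves to the top range while each $B_i$ slides down by $|B_1|$. The only addition is your explicit finishing step (bijection of vertex sets plus translation commuting with convex hull), which the paper leaves implicit and which is sound.
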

\begin{proof}
    The vertices of $[B_1,B_2,\dots, B_k]$ are the permutations $(x_1,\dots,x_{d+1})$ of $[d+1]$ such that 
    \[
    \{x_a\}_{a\in B_i}=\{b_{i-1}+1,\dots , b_i\}
    \]
     where $b_i:=|B_1|+\dots +|B_i|$ and $b_0:=0$.
    
    We translate this face by the vector  
    \[
    v:= \sum_{b\in B_1}w_b = 
    (d+1-|B_1|)\sum_{a\in B_1} e_a -
    |B_1|\sum_{a\notin B_1} e_a. 
    \]
    The resulting vector $(x_1',\dots,x'_{d+1}):=(x_1,\dots,x_{d+1})+v$ satisfies

    \begin{align*}
    x_a' &= x_a+ d+1-|B_1|, \text{ for } a\in B_1 \\
    x_a' &= x_a-|B_1|,\text{ for } a\notin B_1. 
    \end{align*}
    Equivalently, it is a permutation $(x_1',\dots,x_{d+1}')$ of $[d+1]$ such that 
    \begin{align*}
    \{x_a'\}_{a\in B_1}&=\{b_k'+1,\dots , d+1\} \\
    \{x_a'\}_{a\in B_i}&=\{b_{i-1}'+1,\dots , b_i'\}, \text{ for } i\geq 2,
    \end{align*}
    where $b_1':=0$ and
    \begin{align*}
    b_i'&=b_{i}-|B_1|=|B_2|+\dots +|B_i|, \text{ for } i\geq 2.
    \end{align*}    
    The elements $(x_1',\dots,x_{d+1}')$ are precisely the vertices of the face $\mathbf{B}'=[B_2,\dots,B_k,B_1]$.
\end{proof}

Given two ordered partitions $\B=[B_1,\dots,B_k]$ and $\B'=[B_1',\dots,B_k']$ of $[d+1]$, we say that $\B'$ is a \defn{rotation} of $\B$ (or vice versa) if there is a value $j$ such that $B_i'=B_{i+j}$ for all $i$, where the subindex $i+j$ is considered mod $k$. 

\begin{lemma}[Second Rotation Lemma]\label{lem_rotation_partition_two}
    Let $[B_1,\dots,B_k]$ and $[B_1',\dots,B_k']$ be two faces of the permutahedron $\perm{d}$.  
    If 
    \[
    [B_1',\dots , B_k']=[B_1,\dots , B_k]+v
    \]
    for some $v\in\lattice{d}$ then $[B_1',\dots , B_k']$ is a rotation of $[B_1,\dots , B_k]$.
\end{lemma}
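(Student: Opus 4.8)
The plan is to deduce the statement in two stages: first that the two faces involve the \emph{same} underlying unordered partition of $[d+1]$, and then that the cyclic order of the blocks agrees up to rotation. Throughout I write $n=d+1$, and I recall that by Equation~\eqref{eq_lattice_two} a vector $v\in V$ lies in $\lattice{d}$ precisely when all of its coordinates are congruent to one another modulo $n$.

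For the first stage I would use that translation preserves the affine hull of a face. The face $[B_1,\dots,B_k]$ lies in the affine subspace cut out by $\sum_{a\in B_i}x_a = (b_{i-1}+1)+\dots+b_i$ for each $i$, and is full-dimensional therein, so the direction space of its affine hull is $W=\{x:\sum_{a\in B_i}x_a=0 \text{ for all } i\}$. From $W$ one recovers the unordered partition, since $W^\perp$ is spanned by the disjoint indicator vectors $\mathbf 1_{B_i}$; equivalently, $a$ and $a'$ lie in a common block exactly when $e_a-e_{a'}\in W$. Since $[B_1',\dots,B_k']=[B_1,\dots,B_k]+v$ has the same affine hull direction space, the two faces share the same unordered partition, and in particular $B_i'=B_{\pi(i)}$ for some permutation $\pi$ of $[k]$.

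For the second stage I would compare barycenters. The barycenter of $[B_1,\dots,B_k]$ takes, on the coordinates of a block $B$, the value $\operatorname{st}(B)+\tfrac{|B|+1}{2}$, where $\operatorname{st}(B)$ denotes the total size of the blocks preceding $B$ in the given order. Translating by $v$ shifts the barycenter by $v$, and since the term $\tfrac{|B|+1}{2}$ is intrinsic to the block it cancels; hence $v$ is constant on each block $B$, with value $v|_B=\operatorname{st}'(B)-\operatorname{st}(B)$, where $\operatorname{st}'$ is the start function of the order $[B_1',\dots,B_k']$. Because $v\in\lattice{d}$, there is a fixed residue $r$ with $v|_B\equiv r \pmod n$ for every block, so $\operatorname{st}'(B)\equiv \operatorname{st}(B)+r \pmod n$; as both start functions take values in $\{0,\dots,n-1\}$ this forces $\operatorname{st}'(B)=(\operatorname{st}(B)+r)\bmod n$. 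Reading the blocks in increasing order of start value recovers the ordered partition, so $[B_1',\dots,B_k']$ is obtained from $[B_1,\dots,B_k]$ by re-sorting the values $\operatorname{st}(B)$ after adding the constant $r$ modulo $n$, which is exactly a cyclic rotation. This also matches the effect of iterating~\Cref{lem_rotation_partition_one}.

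I expect the main obstacle to be making this last step fully rigorous: the claim that passing from $\operatorname{st}$ to $(\operatorname{st}+r)\bmod n$ and re-sorting performs a cyclic rotation. The clean way to see it is to view the $k$ distinct start values as the cut points of the circle $\Z/n$ into consecutive arcs of lengths $|B_1|,\dots,|B_k|$; adding $r$ rotates the whole configuration of cut points, hence rotates the induced cyclic order of the blocks while only moving the marked starting point, which is precisely a cyclic rotation of the linear order. I would isolate this as a short self-contained observation about cyclic sequences so that the argument does not rely on a figure. Note finally that the sum-zero condition needed for $v\in V$ is automatic, since both barycenters lie in the common affine hyperplane $\sum_a x_a = 1+\dots+(d+1)$, so it imposes no extra constraint.
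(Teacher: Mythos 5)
Your proof is correct, but it takes a genuinely different route from the paper's. The paper first settles the singleton-block (vertex) case: it normalizes via the First Rotation Lemma so that the two permutations share their first entry, and then the congruence condition defining $\lattice{d}$ together with the bound $|y_{i_\ell}-x_{i_\ell}|\leq d$ forces the difference vector to vanish; the general case is then lifted through a single vertex of the face, which also pins down $v=\sum_{j=1}^m w_{i_j}$. You instead stay at the level of faces throughout: the direction space of the affine hull (equivalently, the test $e_a-e_{a'}\in W$) recovers the unordered partition; comparing barycenters shows $v$ is constant on each block with value $\operatorname{st}'(B)-\operatorname{st}(B)$; and the congruence $x_j-x_i\in(d+1)\Z$ from Equation~\eqref{eq_lattice_two} turns these block shifts into a single residue $r$, so that the rotation emerges from a cut-point argument on $\Z/(d+1)$. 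Your route avoids the First Rotation Lemma entirely (the paper uses it both for normalization and again at the end of the general case) and yields more explicit information, namely the exact value of $v$ on each block and the rotation offset $r$; the paper's argument is more elementary and concrete, working directly with permutation coordinates. The one step you rightly flag, that re-sorting $(\operatorname{st}+r)\bmod(d+1)$ performs a rotation, closes cleanly along the lines you sketch: a tiling of $\Z/(d+1)$ by consecutive arcs is determined by its start points and lengths, rotation preserves the cyclic order of the arcs, and since $\operatorname{st}'$ is the start function of a genuine ordered partition it takes the value $0$, so cutting the rotated cyclic order at $0$ is exactly a cyclic rotation of the original block list.
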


\begin{proof}
    We will prove first the case where all the blocks of both ordered partitions are singletons, and then argue that this suffices to prove the general case.

    \medskip
    \emph{Proof of the singleton blocks case.}
    Let $\B=[i_1,\dots , i_{d+1}]$ and $\B'=[i_1',\dots,i_{d+1}']$ be two ordered partitions of $[d+1]$ consisting of singleton blocks (blocks of size one), and assume that 
    \[
    [i_1',\dots,i_{d+1}'] = [i_1,\dots , i_{d+1}] + v
    \]
    for some $v\in \lattice{d}$. We aim to prove that $[i_1',\dots,i_{d+1}']$ is a rotation of $[i_1,\dots , i_{d+1}]$. 

    By~\Cref{lem_rotation_partition_one}, we can apply rotations to $[i_1',\dots, i_{d+1}']$ to obtain a new ordered partition $[j_1,\dots, j_{d+1}]$ such that $j_1=i_1$. This rotation is obtained by translating by a vector $v'\in \lattice{d}$:
    \[
    [i_1',\dots,i_{d+1}']+v' = [j_1,\dots,j_{d+1}].
    \]
    In particular
    \[
    [j_1,\dots,j_{d+1}] = [i_1,\dots , i_{d+1}] + v''
    \]
    for $v''=v+v'\in \lattice{d}$ and $j_1=i_1$. We want to show that $j_\ell=i_\ell$ for all $\ell$. 

    Let $\x=(x_1,\dots,x_{d+1})$ (resp. $\y=(y_1,\dots,y_{d+1})$) be the coordinate of the vertex $[i_1,\dots,i_{d+1}]$ (resp. $[j_1,\dots,j_{d+1}]$) in the permutahedron. Recall that these are given by their inverse permutations. In particular, $x_{i_1}=y_{i_1}=1$. 

    The vector $v''= (v_1'',\dots,v_{d+1}'') =\y-\x \in \lattice{d}$, and $v_{i_1}''=0$.
    Furthermore, $|v_{i_\ell}''|=|y_{i_\ell}-x_{i_\ell}|\leq d$ because $\x$ and $\y$ are permutations of $[d+1]$.
    By definition of the lattice $\lattice{d}$ in~\eqref{eq_lattice_two}, we know that 
    \[
    v_{i_\ell}'' - v_{i_1}'' = v_{i_\ell}'' \in (d+1)\Z. 
    \]
    Therefore, $v_{i_\ell}''=0=y_{i_\ell}-x_{i_\ell}$ for all $\ell$. 
    Thus, $\x=\y$ and $j_\ell=i_\ell$ for all $\ell$ as we wanted.
    This finishes the proof of the singletons blocks case.

    \medskip
    \emph{Proof of the general blocks case.}
    Let $\B=[B_1,\dots , B_k]$ and $\B'=[B_1',\dots,B_k']$ be two ordered partitions of $[d+1]$ such that 
    \[
    [B_1',\dots,B_k'] = [B_1,\dots , B_k] + v
    \]
    for some $v\in \lattice{d}$. We aim to prove that $[B_1',\dots,B_k']$ is a rotation of $[B_1,\dots , B_k]$.

    The vertices of the face $\B$ are ordered partitions of singleton blocks (permutations) that refine $\B$, that is each $B_\ell$ is the union of consecutive singleton blocks. Let $[i_1,\dots,i_{d+1}]$ be one of such vertices, and $[i_1',\dots,i_{d+1}']=[i_1,\dots,i_{d+1}]+v$ be the translated vertex that belongs to $\B'$.
    
    By the proof of the singleton blocks case, we know that $[i_1',\dots,i_{d+1}']$ is a rotation of $[i_1,\dots,i_{d+1}]$. Moreover, we can see from the proof that
    \[
    v = \sum_{j=1}^m w_{i_j}
    \]
    for some fixed value $m$ (such that $i_{m+1}=i_1'$).

    Now, the only way that $[B_1,\dots, B_k]+v$ can be equal to $[B_1',\dots, B_k']$ is that $\{i_1,i_2,\dots,i_m\}=\bigcup_{j=1}^r B_j$ for some $r$. 
    Thus, by~\Cref{lem_rotation_partition_one}, $\B'$ is obtained from $\B$ by rotating the first $r$ blocks to the end. 
\end{proof}

\begin{proposition}
    Every $d+1-k$ dimensional face of the permutahedral tiling~$\permutahedralTiling{d}$ can be obtained \emph{uniquely} as a translate  
    \begin{align}\label{eq_face_translates}
    [B_1,\dots,B_k] + v        
    \end{align}
    for some ordered partition $[B_1,\dots,B_k]$ of $[d+1]$ with $1\in B_1$ and $v\in \lattice{d}$.
\end{proposition}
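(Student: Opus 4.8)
The plan is to read off a representation directly from the structure of the tiling and then use the two rotation lemmas to pin down its normal form and to establish uniqueness. By construction, the tiles of $\permutahedralTiling{d}$ are the translates $\perm{d}+v$ with $v\in\lattice{d}$, and the faces of each such tile are exactly the translates $[B_1,\dots,B_k]+v$ of the faces of $\perm{d}$, which are labelled by ordered partitions of $[d+1]$. Since a $k$-block ordered partition labels a $(d+1-k)$-dimensional face of $\perm{d}$, every $(d+1-k)$-dimensional face $F$ of the tiling is of the form $[B_1,\dots,B_k]+v$ for some ordered partition and some $v\in\lattice{d}$. Thus the existence of \emph{a} representation is immediate from the definition, and the two points that require argument are that one can always arrange $1\in B_1$ (existence of the normal form) and that this normalized representation is unique.

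For the normalization, I would first locate the block $B_j$ containing the element $1$. Applying the First Rotation Lemma (\Cref{lem_rotation_partition_one}) $j-1$ times cyclically moves $B_1,\dots,B_{j-1}$ to the end, producing the ordered partition $[B_j,\dots,B_k,B_1,\dots,B_{j-1}]$, and at each step the face is translated by a vector $\sum_{b\in B_\ell}w_b\in\lattice{d}$. Absorbing the accumulated translation $\sum_{\ell=1}^{j-1}\sum_{b\in B_\ell}w_b$ into $v$ keeps us inside $\lattice{d}$ and yields a representation of $F$ with $1$ in the first block.

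For uniqueness, suppose $[B_1,\dots,B_k]+v=[B_1',\dots,B_k']+v'$ with $1\in B_1$, $1\in B_1'$, and $v,v'\in\lattice{d}$. Rewriting this as $[B_1',\dots,B_k']=[B_1,\dots,B_k]+(v-v')$ with $v-v'\in\lattice{d}$, the Second Rotation Lemma (\Cref{lem_rotation_partition_two}) forces $[B_1',\dots,B_k']$ to be a rotation of $[B_1,\dots,B_k]$. Here is where the normalization pays off: because the blocks are nonempty and pairwise disjoint, the element $1$ lies in exactly one block, so the only rotation carrying the block containing $1$ back to the first position is the identity; hence the two ordered partitions coincide. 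Finally, a face is a bounded polytope, and the resulting identity $[B_1,\dots,B_k]+(v-v')=[B_1,\dots,B_k]$ says this bounded set is invariant under translation by $v-v'$, which forces $v=v'$.

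I expect the main obstacle to be the interplay between the combinatorial and the geometric data in the uniqueness step: a single lattice translation can genuinely relabel a face by a nontrivial rotation, so one cannot treat the ordered partition and the translation vector separately. The Second Rotation Lemma is precisely the tool that ties them together, reducing the ambiguity to a cyclic rotation; the remaining work is the easy observation that fixing the position of $1$ removes the rotational freedom and that boundedness removes the translational freedom.
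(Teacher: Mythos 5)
Your proposal is correct and follows essentially the same route as the paper's own proof: existence by cyclically normalizing with the First Rotation Lemma (Lemma~\ref{lem_rotation_partition_one}) so that $1$ lands in the first block, and uniqueness via the Second Rotation Lemma (Lemma~\ref{lem_rotation_partition_two}) combined with the observation that pinning the position of $1$ eliminates the rotational ambiguity. Your explicit boundedness argument for deducing $v=v'$ merely spells out a step the paper leaves implicit, so there is nothing substantively different here.
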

\begin{proof} 
    We prove this result in two steps. First, we prove that every face of $\permutahedralTiling{d}$ is of the form~\eqref{eq_face_translates} with $1\in B_1$ and $v\in \lattice{d}$. Second, we prove that such a writing is unique. 
    
    For the first part, note that we can consecutively apply rotations to the blocks of an ordered partition until reaching a new partition containing 1 in its first block.  
    Therefore, by the First Rotation Lemma~\ref{lem_rotation_partition_one}, every $d+1-k$ dimensional face of the permutahedron $\perm{d}$ can be written as 
    \[
    [B_1,\dots,B_k] + (a_1'w_1+\dots+a_{d+1}'w_{d+1})
    \]
    for some ordered partition $[B_1,\dots,B_k]$ of $[d+1]$ with $1\in B_1$ and $a_1',\dots,a_{d+1}'\in \Z$. Since the faces of the permutahedral tiling $\permutahedralTiling{d}$ are just translates of faces of $\perm{d}$ by the lattice $\lattice{d}$ spanned by the vectors $w_1,\dots,w_{d+1}$, then every face of $\permutahedralTiling{d}$ is of the form
    \[
    [B_1,\dots,B_k] + v
    \]
    for some ordered partition $[B_1,\dots,B_k]$ of $[d+1]$ with $1\in B_1$ and $v\in \lattice{d}$.

    For the second part, uniqueness, we need to show that if
    \[
    [B_1,\dots,B_k] + v = [B_1',\dots,B_k'] + v' 
    \]
    with $1\in B_1\cap B_1'$ and $v,v'\in \lattice{d}$ then $B_i=B_i'$ for all $i$ and $v=v'$.

    By the Second Rotation Lemma~\ref{lem_rotation_partition_two}, we deduce that $[B_1',\dots,B_k']$ is a rotation of~$[B_1,\dots,B_k]$. Since $1\in B_1\cap B_1'$, then $B_i=B_i'$ for all $i$. From this one deduces that~$v=v'$. 
\end{proof}

As a straight forward corollary we obtain the following classification of the faces of the Heawood complex.

\begin{corollary}\label{cor_faces_HeawoodComplex}
    Every $d+1-k$ dimensional face of the Heawood complex $\heawoodComplex{d}$    
    can be obtained \emph{uniquely} as     
    \begin{align}\label{eq_face_translates_HeawoodComplex}
    [B_1,\dots,B_k] + v        
    \end{align}
    for some ordered partition $[B_1,\dots,B_k]$ of $[d+1]$ with $1\in B_1$ and $v\in \lattice{d}/\sublattice{\kk}$.    
\end{corollary}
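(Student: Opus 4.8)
The plan is to derive this corollary directly from the preceding proposition by transporting its unique-representative statement through the quotient map $\permutahedralTiling{d} \to \heawoodComplex{\kk} = \permutahedralTiling{d}/\sublattice{\kk}$. The essential observation, which I would isolate at the very start, is that $\sublattice{\kk} \subseteq \lattice{d}$ by definition~\eqref{eq_sublattice}. This means that every translation vector used to define $\kk$-equivalence of faces already lies in the lattice $\lattice{d}$ appearing in the proposition, so both the existence and the uniqueness halves of that proposition can be invoked without any modification.

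First I would establish existence. By~\Cref{def_heawood_complex}, a face of $\heawoodComplex{\kk}$ is a $\kk$-equivalence class of a face $F$ of $\permutahedralTiling{d}$. Choosing any representative $F$ and applying the proposition, I write $F = [B_1,\dots,B_k] + v$ with $1 \in B_1$ and $v \in \lattice{d}$; reducing $v$ modulo $\sublattice{\kk}$ yields a representative of the claimed form with $v \in \lattice{d}/\sublattice{\kk}$. The dimension count $d+1-k$ is automatically preserved, since translation does not alter the dimension of a face and the block structure determines the dimension exactly as in $\permutahedralTiling{d}$.

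The main content is uniqueness. Suppose $[B_1,\dots,B_k] + v \equivalent{\kk} [B_1',\dots,B_k'] + v'$ with $1 \in B_1 \cap B_1'$ and $v, v' \in \lattice{d}$. By definition of $\equivalent{\kk}$ there is $s \in \sublattice{\kk}$ with
\[
[B_1',\dots,B_k'] + v' = [B_1,\dots,B_k] + (v + s).
\]
Since $s \in \sublattice{\kk} \subseteq \lattice{d}$, the vector $v + s$ lies in $\lattice{d}$, so the two sides are presentations of the \emph{same} face of $\permutahedralTiling{d}$, both in the normalized form of the proposition. Its uniqueness clause then forces $B_i = B_i'$ for all $i$ and $v' = v + s$, hence $v \equiv v' \pmod{\sublattice{\kk}}$. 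Conversely, equal ordered partitions together with $v \equiv v' \pmod{\sublattice{\kk}}$ give $\kk$-equivalent faces, so the pair $([B_1,\dots,B_k], \, v + \sublattice{\kk})$ is a complete invariant of the face class.

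I do not expect a serious obstacle: the corollary is genuinely a bookkeeping consequence of the proposition, and the only point requiring care is the inclusion $\sublattice{\kk} \subseteq \lattice{d}$, which keeps the $\kk$-equivalence translations within the reach of the proposition's uniqueness. The one subtlety worth flagging is that the block-order normalization (achieved via the First Rotation Lemma~\ref{lem_rotation_partition_one} \emph{before} quotienting) and the reduction $v \mapsto v + \sublattice{\kk}$ are independent operations and can simply be applied in sequence.
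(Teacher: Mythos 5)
Your proof is correct and follows exactly the route the paper intends: the paper states this as an immediate ("straightforward") corollary of the preceding proposition, and your argument — using $\sublattice{\kk}\subseteq\lattice{d}$ to push both the existence and the uniqueness clauses of the proposition through the quotient by $\sublattice{\kk}$ — is precisely that derivation, spelled out. No gaps; the only care point you flag (that $\kk$-equivalence translations stay inside $\lattice{d}$) is indeed the one fact the corollary rests on.
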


We are now ready to prove~\Cref{prop_counting_faces}.

\begin{proof}[Proof of~\Cref{prop_counting_faces}]
    By~\Cref{cor_faces_HeawoodComplex}, the codimension $i$ faces of the Heawood complex can be obtained uniquely as 
    \[
    [B_1,\dots,B_{i+1}]+v
    \]
    for some ordered partition $[B_1,\dots,B_{i+1}]$ of $[d+1]$ with $1\in B_1$ and $v\in \lattice{d}/\sublattice{\kk}$. The number of such ordered partitions is $i! \, \stirling{d+1}{i+1}$ and the number of such $v's$ is~$D_{\kk}$~(Lemma~\ref{lem_quotient_size}). The result follows.  
\end{proof}

\subsection{Proof of Theorem~\ref{thm_heawoodgraph}}
By~\Cref{prop_duality_heawood_torus}, the Heawood graph~$\heawoodGraph{\kk}$ is the dual graph of the $d$-dimensional triangulated torus $\torus{\kk}$. 
Therefore, counting the number of vertices and edges of the graph is equivalent to counting the number of facets and codimension 1 faces of the torus, respectively. 
By~\Cref{thm_maintheorem}, we obtain that the number of vertices of $\heawoodGraph{\kk}$ is~$d! D_\kk$ and the number of edges is $\frac{(d+1)!}{2}D_\kk$.

Vertex transitivity of the graph $\heawoodGraph{\kk}$ follows from the translational symmetry of~$\complexAffine{d}$ (any vertex can be translated to any other vertex). 
We explore the symmetries more in the following section, through the lens of the automorphism group.

\section{The automorphism groups}
In this section, we study the automorphism groups of the triangulated torus \(\torus{\kk}\) and the generalized Heawood graph \(\heawoodGraph{\kk}\).
Any automorphism of \(\torus{\kk}\) is naturally an automorphism of \(\heawoodGraph{\kk}\), since the graph is determined by incidences of faces in the complex. 
A simplicial automorphism preserves incidences of facets, and so directly acts on the graph.
There may be automorphisms of the graph which do not arise from automorphisms of \(\torus{\kk}\), but we will see those are quite rare.
We focus first on the automorphisms of \(\complexAffine{d}\).

We partially follow the notation of K\"uhnel and Lassman~\cite{kuhnel_lassmann_symmetrictorii_1988} for subgroups of the automorphism group, starting with translation.
We represent this group of automorphisms by \(T\).
In this infinite setting, this group is generated by \(d\) different translations, translating by each \(w_i\) for \(1 \leq i \leq d\).
After the quotient, the number of generators for this group could be reduced, in some cases as much as to a single generator, translating by \(w_1\).

We decompose the remaining automorphisms in a slightly unusual way.
We denote by \(R\) the rotation action which replaces each \(w_i\) by \(-w_i\). 
This is orientation-preserving in even dimensions and orientation-reversing in odd dimensions.

Finally we have the group \(\mathfrak S_{d+1}\), the full symmetry group acting on the \(w_i\).
Of course there are many subgroups of this group, but we focus on the cyclic subgroup, represented by \(C_{d+1}\), which rotates the \(w_i\) to \(w_{i+1}\).

We now turn to the main statements about the automorphism groups of the quotient.

\begin{theorem}
\label{thm_torus_automorphism}
For \(\kk = (k,k,\ldots,k) \in \N^{d+1}\) with \(k \geq 1\) and \(d \geq 2\), the automorphism group of \(\torus{\kk}\) is generated by \(T,R\) and \(C_{d+1}\), with cardinality \(2 (d+1) \det M_k \).
\end{theorem}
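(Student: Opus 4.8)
The plan is to realize $\torus\kk$ as a quotient of its universal cover and to compute automorphisms there. Since $\complexAffine d$ is simply connected (its geometric realization is $\R^d$) and $\sublatticeAffine\kk$ acts freely and properly discontinuously by deck transformations with $\torus\kk=\complexAffine d/\sublatticeAffine\kk$, every automorphism of $\torus\kk$ lifts to an automorphism of $\complexAffine d$ normalizing $\sublatticeAffine\kk$, and conversely. This yields
\[
\mathrm{Aut}(\torus\kk)\;\cong\;N_{\mathrm{Aut}(\complexAffine d)}(\sublatticeAffine\kk)\big/\sublatticeAffine\kk .
\]
So I would first pin down $\mathrm{Aut}(\complexAffine d)$, then impose the normalizer condition, and finally divide out by $\sublatticeAffine\kk$.

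For the first step I would use that $\complexAffine d$ is the Coxeter complex of the Euclidean reflection group of type $\widetilde A_d$. It is vertex-transitive under translations by the weight lattice $\latticeAffine d$ (whose elements are exactly its vertices), and the link of the origin is the Coxeter complex of $\mathfrak S_{d+1}$, whose automorphism group is $\mathfrak S_{d+1}\rtimes\mathrm{Aut}(A_d\text{-diagram})=\mathfrak S_{d+1}\rtimes C_2$. By the rigidity of Euclidean Coxeter complexes an automorphism fixing the origin is determined by its action on this link and is realized by a linear map; for $d\ge 2$ no permutation acts as $-\mathrm{id}$, so $R=-\mathrm{id}\notin\mathfrak S_{d+1}$ and the stabilizer of the origin is exactly $\bar P_0:=\mathfrak S_{d+1}\times\langle R\rangle$. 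Hence $\mathrm{Aut}(\complexAffine d)=\latticeAffine d\rtimes\bar P_0$. Since any translation-times-linear element conjugates a translation by $s$ to the translation by $g(s)$, it normalizes $\sublatticeAffine\kk$ iff $g(\sublatticeAffine\kk)=\sublatticeAffine\kk$. Writing $\mathrm{Stab}:=\{g\in\bar P_0:g(\sublatticeAffine\kk)=\sublatticeAffine\kk\}$, this gives $\mathrm{Aut}(\torus\kk)=(\latticeAffine d/\sublatticeAffine\kk)\rtimes\mathrm{Stab}=T\rtimes\mathrm{Stab}$, of order $\det M_\kk\cdot|\mathrm{Stab}|$ by \Cref{lem_quotient_size_affine}. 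As $R=-\mathrm{id}$ preserves every lattice, $\mathrm{Stab}=\mathrm{Stab}_0\times\langle R\rangle$ with $\mathrm{Stab}_0:=\mathrm{Stab}\cap\mathfrak S_{d+1}$, so the whole theorem reduces to the claim $\mathrm{Stab}_0=C_{d+1}$.

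The heart of the proof is this last claim, and here the hypothesis $\kk=(k,\dots,k)$ enters decisively: $M_\kk=(k+1)I-kP$ is circulant, where $P$ is the permutation matrix of the cycle $(1\,2\cdots d{+}1)$. Identifying $\latticeAffine d\cong\Z^{d+1}/\Z(1,\dots,1)$ with the circulant ring $\Z[x]/(x^{d+1}-1)$ (via $\widetilde w_i\leftrightarrow x^i$), the sublattice $\sublatticeAffine\kk$ becomes the ideal generated by $(k+1)-kx$, and one computes
\[
\Z[x]\big/\big(x^{d+1}-1,\;(k+1)-kx\big)\;\cong\;\Z/N\Z,\qquad N=(k+1)^{d+1}-k^{d+1}=\det M_\kk,
\]
via $x\mapsto u:=(k+1)k^{-1}$, a unit modulo $N$ of multiplicative order exactly $d+1$ (the order divides $d+1$ since $x^{d+1}=1$, and exceeds $e$ for $0<e<d+1$ because $0<(k+1)^e-k^e<N$). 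Thus $\latticeAffine d/\sublatticeAffine\kk\cong\Z/N$ with $\widetilde w_i\mapsto u^i$. A permutation $\sigma$ lies in $\mathrm{Stab}_0$ iff it descends to a group automorphism $\bar\sigma$ of $\Z/N$; every such automorphism is multiplication by a unit $\lambda$, so $u^{\sigma(i)}=\bar\sigma(u^i)=\lambda u^i$ forces $u^{\sigma(i)-i}=\lambda$ to be independent of $i$, whence $\sigma(i)-i$ is constant modulo $\mathrm{ord}(u)=d+1$ and $\sigma$ is a cyclic rotation. Conversely, rotations commute with the circulant matrix and preserve $\sublatticeAffine\kk$. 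Therefore $\mathrm{Stab}_0=C_{d+1}$, $|\mathrm{Stab}|=2(d+1)$, and $\mathrm{Aut}(\torus\kk)=T\rtimes(\langle R\rangle\times C_{d+1})$ is generated by $T$, $R$, $C_{d+1}$ with cardinality $2(d+1)\det M_\kk$.

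The main obstacle is the input $\mathrm{Aut}(\complexAffine d)=\latticeAffine d\rtimes(\mathfrak S_{d+1}\times\langle R\rangle)$: one must rule out exotic, non-affine combinatorial automorphisms of the infinite complex and verify that the stabilizer of a vertex is no larger than $\mathfrak S_{d+1}\times C_2$. I would handle this by the standard rigidity of Euclidean Coxeter complexes (an automorphism is determined by its restriction to the star of one vertex and extends outward chamber by chamber), or by citing it; once it is in hand, the circulant computation above is the clean and self-contained core of the argument.
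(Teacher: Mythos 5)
Your proof is correct, and while its outer skeleton matches the paper's---peel off a translation $t_v$, then analyze the stabilizer of a vertex---the core lemma is genuinely different. The paper pins down the stabilizer \emph{metrically}: it takes for granted (as you do explicitly, via rigidity of the $\widetilde A_d$ Coxeter complex) that an automorphism fixing $00\ldots0$ acts linearly and preserves inner products, and then classifies the possible images of the basis of $\sublatticeAffine{\kk}$ by comparing Gram data of the rows of $M_k$ (self-product $2k^2+2k+1$, pairwise products $0$ or $-k^2-k$, etc.), concluding the map must cyclically shift the rows, possibly composed with $-\mathrm{id}$. You pin it down \emph{algebraically}: since $M_\kk$ is circulant, $\latticeAffine{d}/\sublatticeAffine{\kk}\cong\Z/N$ with $N=(k+1)^{d+1}-k^{d+1}=\det M_\kk$ and $\widetilde w_i\mapsto u^i$ for $u=(k+1)k^{-1}$ of exact multiplicative order $d+1$, so a permutation preserving $\sublatticeAffine{\kk}$ descends to multiplication by a unit, forcing $\sigma(i)-i$ constant mod $d+1$. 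Your route buys several things: it shows the translation subgroup $T$ is \emph{cyclic} for constant $\kk$ (sharpening the paper's passing remark about reducing the number of generators of $T$), it needs no metric input at the stabilizer stage, it isolates exactly where the hypothesis $k_1=\cdots=k_{d+1}$ enters (circulance of $M_\kk$), and your semidirect-product bookkeeping $\mathrm{Aut}(\torus{\kk})=T\rtimes(\langle R\rangle\times C_{d+1})$ makes the cardinality count airtight, whereas the paper simply multiplies the orders of $T$, $R$, $C_{d+1}$. What your route costs is the input $\mathrm{Aut}(\complexAffine{d})=\latticeAffine{d}\rtimes(\mathfrak S_{d+1}\times\langle R\rangle)$, which you rightly flag as needing a citation or proof; note, though, that the paper's one-sentence facet-propagation remark (``each facet which neighbors a fixed facet has no freedom in the label of its last vertex'') is its terse substitute for the same rigidity fact, so both treatments lean on it and yours merely makes the reliance explicit.

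One small repair: $\Z[x]/(x^{d+1}-1)$ has rank $d+1$, so it is not isomorphic to $\latticeAffine{d}\cong\Z^{d+1}/\Z(1,\dots,1)$; the correct identification is $\latticeAffine{d}\cong\Z[x]/\bigl(x^{d+1}-1,\;1+x+\cdots+x^d\bigr)$. This is harmless for your computation: since $u-1=k^{-1}$, one has $\sum_{j=0}^{d}u^j=k\,(u^{d+1}-1)\equiv 0 \pmod{N}$, so the extra relation is already absorbed and the quotient is still $\Z/N$. With that fixed, and with the covering/lifting step in your first paragraph justified by \Cref{prop_torus} (which guarantees, for $k\geq 1$, that the combinatorial quotient is a genuine simplicial complex, so the quotient map is a simplicial covering and automorphisms lift to the universal cover normalizing the deck group $\sublatticeAffine{\kk}$), the argument is complete.
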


\begin{proof}
From Lemma~\ref{lem_quotient_size_affine}, we see that the subgroup \(T\) is of order \(\det M_k\).
The subgroup \(R\) is of order \(2\) and \(C_{d+1}\) is of order \(d+1\).
Putting these together, the group of automorphisms generated by \(T, R\) and \(C_{d+1}\) has order \(2(d+1)\det M_k\).

Suppose \(\sigma\) is an automorphism of \(\torus{\kk}\).
It maps faces to faces and preserves incidences of faces.
In particular, \(\sigma\) maps the chain of faces 
\[(\{00\ldots0\}, \{00\ldots0,10\ldots0\}, \ldots, \{00\ldots0,10\ldots0,\ldots,11\ldots10\})\] 
to some chain of faces.
The image of the first face in the chain, the vertex \(\sigma(\{00\ldots0\})\), uniquely determines an element of \(T\), \(t_v\).
We can then write \(\sigma = t_v \circ \tau \), where \(\tau\) is an automorphism which fixes the vertex \(00\ldots0\).

Once the image of a facet under \(\tau\) is determined, the rest of the automorphism is fixed, since each facet which neighbors a fixed facet has no freedom in the label of its last vertex.
Since \(\tau\) fixes \(00\ldots0\), it is determined by how it maps a basis of~\(\sublatticeAffine{\kk}\).
The basis we choose are the rows of~\(M_k\).
The inner product of any vector in \(M_k\) with itself is \(2k^2+2k+1\).
The only vectors in \(\sublatticeAffine{\kk}\) with this property are the vectors of \(M_k\) and their negatives.
The inner product of any two distinct vectors in \(M_k\) is either 0 or \(-k^2-k\).
The inner product of a vector in \(M_k\) with a vector in \(-M_k\) is one of 0, \(k^2+k\) or \(-2k^2-2k-1\).
Since \(\tau\) must preserve the inner product, we conclude that it must map the basis \(M_k\) to either \(M_k\), or \(-M_k\), but not some mix between the two.
Finally, among all permutations of \(M_k\), \(\tau\) must cyclically shift the vectors of \(M_k\), since there is a natural successor vector within \(M_k\).

Therefore \(\tau\) is an element of the group \(<R,C_{d+1}>\).
Recalling that \(\sigma = t_v \tau\), we see that \(\sigma\) is an element of the group generated by \(T, R\) and \(C_{d+1}\), and indeed this group is the automorphism group of \(\torus{\kk}\).
\end{proof}

For the most part, the automorphisms of the graph \(\heawoodGraph{\kk}\) exactly match the automorphsims of \(\torus{\kk}\), with one exception. 
We state this as two theorems, but provide a joint proof of both theorems.

\begin{theorem}\label{thm_heawood_graph_automorphisms}
For \(\kk = (k,k,\ldots,k) \in \N^{d+1}\) with \(k \geq 1\) and \(d \geq 2\) except \(\kk = (1,1,1)\), the automorphism group of \(\heawoodGraph{\kk}\) is generated by \(T,R\) and \(C_{d+1}\), with cardinality \(2 (d+1) \det M_k \).
\end{theorem}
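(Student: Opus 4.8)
The plan is to establish the two inclusions $\operatorname{Aut}(\torus{\kk}) \leq \operatorname{Aut}(\heawoodGraph{\kk})$ and $\operatorname{Aut}(\heawoodGraph{\kk}) \leq \operatorname{Aut}(\torus{\kk})$; combined with Theorem~\ref{thm_torus_automorphism} these force equality and the stated order $2(d+1)\det M_k$. The first inclusion is immediate: as noted at the start of this section, every simplicial automorphism of $\torus{\kk}$ preserves facet-adjacency and hence acts on the dual graph $\heawoodGraph{\kk}$, which already gives the lower bound $|\operatorname{Aut}(\heawoodGraph{\kk})| \geq 2(d+1)\det M_k$. All the work lies in the reverse inclusion, i.e.\ in showing that an \emph{arbitrary} graph automorphism $\phi$ of $\heawoodGraph{\kk}$ arises from a simplicial automorphism. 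By the duality of Proposition~\ref{prop_duality_heawood_torus}, it is equivalent to show that $\phi$, viewed as a permutation of the $1$-skeleton of the Heawood complex $\heawoodComplex{\kk}$, carries permutahedral faces to permutahedral faces; the crux is to recover the facet structure of $\heawoodComplex{\kk}$ from the bare graph.

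The mechanism for this reconstruction is the short-cycle structure of $\heawoodGraph{\kk}$. The $2$-dimensional faces of the permutahedron $\perm{d}$ are squares and hexagons, indexed by ordered partitions of $[d+1]$ with one block of size $3$ (hexagons, present for all $d \geq 2$) or two blocks of size $2$ (squares, present exactly when $d \geq 3$); in the $1$-skeleton of $\heawoodComplex{\kk}$ these appear as $4$-cycles and $6$-cycles. First I would lift to the covering graph furnished by the $1$-skeleton of the infinite permutahedral tiling $\permutahedralTiling{d}$, and verify there that the \emph{only} $4$-cycles and short $6$-cycles are the boundaries of square and hexagonal $2$-faces. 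Passing to the quotient introduces an accidental (non-facial) short cycle only if some nonzero vector of $\sublattice{\kk}$ can be written as a sum of at most six edge-steps $e_j - e_i$ of the tiling; the point is that for $\kk=(k,\dots,k)$ with $\kk \neq (1,1,1)$ the sublattice admits no such short vector, so every $4$- or $6$-cycle of $\heawoodGraph{\kk}$ is facial. Here the two regimes split: for $d \geq 3$ the square $4$-cycles are already rigid enough even when $k=1$ (covering $(1,1,1,1)$ and its relatives), while for $d = 2$ only hexagons are available and one needs $k \geq 2$ so that $\det M_k = 3k^2+3k+1$ is large enough to exclude short wrap-around cycles.

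With the facial short cycles intrinsically characterized, $\phi$ must permute them, hence preserves the $2$-skeleton of each permutahedral facet of $\heawoodComplex{\kk}$. Since a permutahedron is simple and thus reconstructible from its graph together with its square/hexagon $2$-faces, $\phi$ sends facets of $\heawoodComplex{\kk}$ to facets and respects all incidences, so it is a polytopal automorphism of $\heawoodComplex{\kk}$ and, dually, a simplicial automorphism of $\torus{\kk}$. This yields $\operatorname{Aut}(\heawoodGraph{\kk}) \leq \operatorname{Aut}(\torus{\kk})$ and completes the argument.

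I expect the main obstacle to be precisely the rigidity step of the second paragraph: proving, uniformly in $d$ and $k$, that no non-facial short cycle survives the quotient, and thereby isolating $(1,1,1)$ as the unique exception. It is exactly for the classical Heawood graph that this fails---there the facets are hexagons, the graph has girth $6$, and the identification creates extra $6$-cycles beyond the seven faces, so that $\operatorname{Aut}$ swells to $\mathrm{PGL}(2,7)$ of order $336$ rather than $2\cdot 3\cdot 7 = 42$. I would therefore handle $d\geq 3$ (using the $4$-cycles) and $d=2,\ k\geq 2$ (using the $6$-cycles and the size of $\det M_k$) as two separate local counting arguments, both feeding into the same reconstruction conclusion.
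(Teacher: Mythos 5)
Your proposal is correct and follows essentially the same route as the paper: both arguments observe that any graph automorphism must permute $4$-cycles and $6$-cycles, verify that for $\kk \neq (1,1,1)$ all such short cycles are facial (squares and hexagons for $d > 2$, hexagons alone for $d = 2$), and then invoke a Perles-type reconstruction of the complex from its $0$-, $1$-, and $2$-faces to conclude every graph automorphism is simplicial, isolating $(1,1,1)$ with its order-$336$ group as the sole exception. Your two refinements --- working on the dual Heawood-complex side rather than the torus side, and making explicit the lattice criterion that an accidental quotient cycle requires a nonzero vector of $\sublattice{\kk}$ expressible as at most six steps $e_j - e_i$ --- are welcome elaborations of steps the paper asserts without detail, not a different method.
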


\begin{theorem}
For \(\kk = (1,1,1) \), the automorphism group of \(\heawoodGraph{\kk}\) is generated by \(T, R, C_{d+1},\) and an additional automorphism group \(W\), with cardinality 336.
\end{theorem}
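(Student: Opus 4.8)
The plan is to treat this as the exceptional branch of the joint proof, where the graph acquires symmetries that the triangulated torus does not. The common starting point is that every automorphism of $\torus{(1,1,1)}$ induces an automorphism of its dual graph $\heawoodGraph{(1,1,1)}$, and this assignment is injective, since a simplicial automorphism fixing every facet (equivalently, every vertex of the dual graph) must be the identity. Hence, by \Cref{thm_torus_automorphism}, the group $\langle T, R, C_{3}\rangle$ embeds into $\mathrm{Aut}(\heawoodGraph{(1,1,1)})$ as a subgroup of order $2\cdot 3\cdot \det M_{1}=42$. Here $\langle T, C_3\rangle\cong C_7\rtimes C_3$ is a Frobenius group of order $21$ that preserves the bipartition of the graph into the $7$ up-pointing and $7$ down-pointing triangles of the torus (translations and the $120^{\circ}$ rotation $C_3$ preserve the pointing type), whereas $R$, acting as a point reflection through a vertex, interchanges the two types.

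First I would identify $\heawoodGraph{(1,1,1)}$ with the classical Heawood graph, as already announced in the introduction: it is the $3$-regular bipartite graph on $14$ vertices and $21$ edges realized as the point--line incidence graph of the Fano plane $PG(2,2)$. I would then invoke the classical determination of its automorphism group: the bipartition-preserving automorphisms are exactly the collineations of $PG(2,2)$, forming $\mathrm{Aut}(PG(2,2))\cong PSL(2,7)$ of order $168$, and the self-duality of the Fano plane contributes a polarity swapping points and lines, so that $\mathrm{Aut}(\heawoodGraph{(1,1,1)})\cong PGL(2,7)$ has order $336$. This is a standard fact that I would cite rather than reprove.

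With a subgroup of order $42$ sitting inside a group of order $336$, the final step is to name the extra automorphisms. Since $336/42=8>1$ the inclusion is proper, so I would define $W$ to be any set of additional automorphisms for which $\langle T,R,C_3,W\rangle$ is all of $PGL(2,7)$; concretely one may take a generating set for the index-$8$ overgroup of $\langle T,C_3\rangle\cong C_7\rtimes C_3$ inside $PSL(2,7)$. The cardinality $336$ then follows at once. The conceptual point, which also separates this case from the generic one, is a failure of reconstruction: an automorphism of $\heawoodGraph{\kk}$ descends from an automorphism of $\torus{\kk}$ exactly when the $6$-cycles of the graph that arise as vertex-links of the triangulation can be distinguished graph-theoretically from the other girth-$6$ cycles. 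For $\kk=(1,1,1)$ the links of the $7$ vertices are $6$-cycles inside a graph of girth $6$, and the action of $PGL(2,7)$ does not preserve the collection of these $7$ link-cycles; the automorphisms that carry a link-cycle to a non-link $6$-cycle are precisely the new elements collected in $W$.

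The main obstacle I anticipate is the dichotomy itself rather than the count. For the present statement the order $336$ is immediate from the classical identification, so the delicate work lies in the companion theorem: one must verify that for every $\kk=(k,\dots,k)\neq(1,1,1)$ the local short-cycle structure of $\heawoodGraph{\kk}$ is rigid enough to recover $\torus{\kk}$, forcing equality $\mathrm{Aut}(\heawoodGraph{\kk})=\mathrm{Aut}(\torus{\kk})$, while for $(1,1,1)$ this rigidity breaks. Pinning down why $(1,1,1)$ is the unique exception --- essentially an explicit analysis of the girth-$6$ cycles of the Heawood graph and their orbits under $PGL(2,7)$ --- is the technical heart of the joint argument, after which the value $336$ is a corollary of the identification with the Heawood graph.
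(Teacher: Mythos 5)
Your proposal is correct, but it reaches the order $336$ by a genuinely different route than the paper. The paper stays inside its own coordinate model: it exhibits one concrete extra automorphism, namely the permutation $(3\overline{1}4,105)(4\overline{1}3,501)(132,024)(231,420)$ written on vertex labels of the fundamental tile, and then settles the total order $336$ by a direct, computer-assisted computation; no classical literature is invoked. You instead embed $\langle T,R,C_3\rangle$ (order $42 = 2\cdot 3\cdot \det M_{(1,1,1)}$, correctly computed) into the automorphism group, identify $\heawoodGraph{(1,1,1)}$ with the classical Heawood graph, and cite the classical fact that its automorphism group is $PGL(2,7)$ of order $336$, built from the collineation group $PGL(3,2)\cong PSL(2,7)$ of the Fano plane together with a duality. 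What your route buys is a conceptual, computation-free proof with a clean group-theoretic picture (an index-$8$ inclusion of the order-$42$ subgroup); what it costs is two things worth making explicit. First, the identification of $\heawoodGraph{(1,1,1)}$ with the classical Heawood graph is asserted in the paper's introduction but not proved there; to make your argument self-contained you should note that the paper's own counts give a $3$-regular bipartite graph on $14$ vertices with $21$ edges and girth $6$ (no $4$-cycles when $d=2$), and such a graph is the unique $(3,6)$-cage, hence the Heawood graph. Second, the theorem statement names an additional generating group $W$, and the paper backs this up with an explicit element; your $W$ is defined only abstractly as "whatever completes a generating set," which is logically adequate for the cardinality claim but weaker as a description of $W$. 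Your closing remarks about the dichotomy --- that $(1,1,1)$ is exceptional precisely because the vertex-link $6$-cycles cannot be distinguished among all girth-$6$ cycles, and that the orbit of link-cycles under the full group is not preserved --- accurately reflect the mechanism in the paper's joint proof of the two theorems, where the rigidity argument via Perles's reconstruction from $2$-faces fails only in this case.
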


\begin{proof}
    As a reminder, any automorphism of \(\torus{\kk}\) induces an automorphism of \(G_{\kk}\).
    What remains to be seen is that unless \(k = (1,1,1)\), there are no additional automorphisms of the graph which are not also automorphisms of \(\torus{\kk}\).
    Any automorphism of the graph must map cycles to cycles, and particularly map 4-cycles to 4-cycles and 6-cycles to 6-cycles.
    
    If \(d>2\), the only 6-cycles of \(\heawoodGraph{\kk}\) are those corresponding to the hexagons within the permutahedra generated by the simple roots \(a_i, a_{i+1}\).
    Likewise the only 4-cycles are the squares generated by \(a_i, a_j\) with \(|j-i| \geq 2\).
    Therefore, an automorphsim of \(\heawoodGraph{\kk}\) must map the 2-faces back to themselves.
    A simple complex can be determined solely from the 0-,1-, and 2-faces from a direct argument proved by Perles \cite{perles_problems_1970}.
    Consequently, an automorphsim of \(\heawoodGraph{\kk}\) must map all faces back to themselves, and in fact be equivalent to a simplicial automorphism of \(\torus{\kk}\).

    If \(d=2\), there are no 4-cycles in \(\heawoodGraph{\kk}\).
    Instead, we consider only the 6-cycles.
    If the only 6-cycles are those corresponding to hexagons of \(\complexAffine{2} \), we can rely on the previous argument to see that the automorphisms of \(\heawoodGraph{\kk}\) are those of \(\torus{\kk}\).
    The only case where additional 6-cycles appear is  when \(\kk = (1,1,1)\). 

    In this case there are additional automorphisms.
    We denote this group by \(W\).
    One such automorphism is \((3\overline{1}4,105)(4\overline{1}3,501)(132,024)(231,420)\).
    Direct computation, with computer assistance, shows that the automorphism group in this case has order 336.
\end{proof}

We now turn to the case where the \(k_i\) are not all the same, breaking some of the symmetry.
Here, there is perfect correspondence between the automorphisms of \(\torus{\kk}\) and those of \(\heawoodGraph{\kk}\).

\begin{theorem}
For \(\kk = (k_1,k_2,\ldots,k_{d+1}) \in \N^{d+1}\) with not all \(k_i\) equal and \(d \geq 2\), the automorphism group of \(\torus{\kk}\) is generated by \(T,R\) and some subgroup of \(C_{d+1}\), with cardinality \(2 \ell \det M_k \), where \(\ell\) is the cardinality of the subgroup of \(C_{d+1}\).
\end{theorem}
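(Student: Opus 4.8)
The plan is to follow the proof of Theorem~\ref{thm_torus_automorphism} almost verbatim, isolating the single point at which unequal $k_i$ break the cyclic symmetry. Write $c\in C_{d+1}$ for the generator $\widetilde w_i\mapsto\widetilde w_{i+1}$ (indices read mod $d+1$) and set
\[
C_\ell=\{\,c^{\,j}: k_{i+j}=k_i\text{ for all }i\,\}=\operatorname{Stab}_{C_{d+1}}(\kk),
\]
the group of cyclic shifts fixing the word $\kk$; its order $\ell$ divides $d+1$, and $\ell<d+1$ precisely because the $k_i$ are not all equal (the minimal period of $\kk$ is at least $2$). The claim to prove is then $\operatorname{Aut}(\torus{\kk})=\langle T,R,C_\ell\rangle$, a group of order $2\ell D_\kk$, so that the surviving piece of $C_{d+1}$ is exactly $C_\ell$.

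For the lower bound, $T$ has order $D_\kk$ by Lemma~\ref{lem_quotient_size_affine} and is normal; $R$ is the negation $x\mapsto -x$, which preserves $\complexAffine{d}$ and, being a lattice, $\sublatticeAffine{\kk}$, so $R$ always descends; and for a rotation $c^{\,j}$ one computes
\[
c^{\,j}(\widetilde p_i)=(k_i+1)\widetilde w_{i+j}-k_{i+1}\widetilde w_{i+j+1}.
\]
If $c^{\,j}\in C_\ell$ this equals $\widetilde p_{i+j}$, so $c^{\,j}$ permutes the generators of $\sublatticeAffine{\kk}$ and descends, while in general it carries $\sublatticeAffine{\kk}$ onto $\sublatticeAffine{c^{\,j}\kk}$. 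Since $R$ is central, $\langle R,C_\ell\rangle\cong \mathbb{Z}_2\times C_\ell$ has order $2\ell$ and meets $T$ trivially (every nontrivial element of $\langle R,C_\ell\rangle$ fixes the origin, while no nontrivial translation does), giving $|\langle T,R,C_\ell\rangle|=2\ell D_\kk$.

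For the upper bound I would repeat the reduction of Theorem~\ref{thm_torus_automorphism}: an automorphism $\sigma$ sends the reference flag based at $00\cdots0$ to a flag whose minimal vertex pins down a translation $t_v\in T$, so $\tau:=t_v^{-1}\sigma$ fixes $00\cdots0$. Such an origin‑fixing simplicial automorphism permutes the nearest‑neighbour directions $\{\pm\widetilde w_i\}$, hence acts as a global sign $\epsilon=\pm1$ times a coordinate permutation $\pi\in\mathfrak S_{d+1}$, and in particular preserves the (coefficient) inner product on $\sublatticeAffine{\kk}$. Absorbing $\epsilon$ into $R$, one reduces to $\pi$ and reruns the inner‑product computation: the only nonzero off‑diagonal products are $\langle\widetilde p_i,\widetilde p_{i+1}\rangle=-k_{i+1}(k_{i+1}+1)$, so $\pi$ must be a symmetry of the cyclic adjacency of the generators, and the sign pattern (positive diagonal, negative super‑diagonal entry) provides a canonical successor that, exactly as in the equal case, rules out reflections and forces $\pi=c^{\,j}$. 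It then remains to show $c^{\,j}$ preserves $\sublatticeAffine{\kk}$ iff $c^{\,j}\kk=\kk$, whence $\tau\in\langle R,C_\ell\rangle$ and $\sigma\in\langle T,R,C_\ell\rangle$.

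The main obstacle is this last equivalence. In Theorem~\ref{thm_torus_automorphism} all generators shared the single norm $2k^2+2k+1$, so $\{\pm\widetilde p_i\}$ was cut out as one norm class; here the self‑products $\langle\widetilde p_i,\widetilde p_i\rangle=(k_i+1)^2+k_{i+1}^2$ vary with the consecutive pair $(k_i,k_{i+1})$, so I must argue from the full inner‑product graph of the $\widetilde p_i$ that $\tau$ carries the generating configuration to itself, and crucially exclude the possibility that a rotation $c^{\,j}$ with $c^{\,j}\kk\neq\kk$ \emph{accidentally} satisfies $\sublatticeAffine{c^{\,j}\kk}=\sublatticeAffine{\kk}$. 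The route I foresee is to recover each $k_{i+1}$ intrinsically from the edge value $-\langle\widetilde p_i,\widetilde p_{i+1}\rangle=k_{i+1}(k_{i+1}+1)$ (a strictly increasing function of $k_{i+1}$) and to pin down each $\widetilde p_i$ as the shortest lattice vector of $\sublatticeAffine{\kk}$ in its consecutive two‑generator plane; matching these edge values around the cycle then forces $c^{\,j}\kk=\kk$. Verifying this characterization, and confirming it is insensitive to the now–varying norms (in particular that the genuinely longer lattice vectors sharing a coordinate pair do not interfere), is the one place demanding real work beyond transcribing the earlier argument.
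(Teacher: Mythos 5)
Your proposal follows the paper's own proof essentially verbatim: the same decomposition \(\sigma = t_v \circ \tau\) with \(\tau\) fixing \(00\ldots0\), the same inner-product rigidity forcing \(\tau \in \langle R, C_{d+1}\rangle\), and the same identification of the permitted subgroup as the cyclic stabilizer of \(\kk\), so your \(C_\ell\) and the order count \(2\ell\det M_{\kk}\) match the paper exactly (and your explicit lower-bound verification that \(T\), \(R\), and \(C_\ell\) descend is a point the paper leaves implicit). The one step you flag as ``demanding real work'' --- intrinsically characterizing the rows of \(M_{\kk}\) inside \(\sublatticeAffine{\kk}\) so that no shift \(c^{\,j}\) with \(c^{\,j}\kk \neq \kk\) can accidentally preserve the sublattice --- is precisely the point the paper dispatches in one sentence (``it must preserve the successor of each row, like in the previous proof''), so your treatment is, if anything, the more scrupulous of the two.
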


\begin{proof}
    From Lemma~\ref{lem_quotient_size_affine}, the number of vertices of \(\torus{\kk}\) is \(\det M_k\), so \(T\) is of order  \(\det M_k\).
    The order of \(R\) is 2.

    Following the proof of Theorem~\ref{thm_torus_automorphism},
    we consider an atuomorphism \(\sigma\) of \(\torus{\kk}\).
    Again, \(\sigma = t_v \circ \tau\), where \(t_v\) is an element of \(T\) and \(\tau\) fixes \(00\ldots0\).
    Likewise,  \(\tau\) must map the basis \(M_k\) to some basis of \(\sublatticeAffine{\kk}\).
    In this case, however, the lengths of the various vectors of \(M_k\) are not all the same.
    Whatever action \(\tau\) has on the rows of \(M_k\), it must preserve the successor of each row, like in the previous proof.
    As before, \(\tau\) could swap to \(-M_k\), which accounts for the subgroup \(R\).
    Therefore \(\tau\) is contained in \(<R,C_{d+1}>\), however not all elements of \(C_{d+1}\) are permitted.
    The subgroup of  \(C_{d+1}\) which is permitted is the one that acts on the entries of \(\kk\) without changing the values at any position.
\end{proof}

To ensure the ideas of the proof are clear enough, we present a small collection of examples.
If \(\kk = (2,1,2,1)\), then the subgroup of \(C_{4}\) which fixes the values in \(\kk\) is \(C_2\), and if \(\kk =  (2,3,4,3,2,3,4,3,2,3,4,3)\) the subgroup of \(C_{12}\) is \(C_3\).

\begin{theorem}
For \(\kk = (k_1,k_2,\ldots,k_{d+1}) \in \N^{d+1}\) with not all \(k_i\) equal and \(d \geq 2\), the automorphism group of \(\heawoodGraph{\kk}\) is  generated by \(T,R\) and some subgroup of \(C_{d+1}\), with cardinality \(2 \ell \det M_k \), where \(\ell\) is the cardinality of the subgroup of \(C_{d+1}\).
That is, the automorphism group of \(\heawoodGraph{\kk}\) is identical to the automorphism group of \(\torus{\kk}\).
\end{theorem}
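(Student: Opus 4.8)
The plan is to prove this by a two-sided inclusion of automorphism groups, with all the real work concentrated in one direction. The forward inclusion is essentially free: as noted at the opening of this section and already exploited in Theorem~\ref{thm_heawood_graph_automorphisms}, any simplicial automorphism of $\torus{\kk}$ preserves incidences of faces and hence induces an automorphism of its dual graph $\heawoodGraph{\kk}$ (Proposition~\ref{prop_duality_heawood_torus}). Thus $\operatorname{Aut}(\torus{\kk})$ embeds into $\operatorname{Aut}(\heawoodGraph{\kk})$, and by the preceding theorem the former has order $2\ell\det M_\kk$, where $\ell$ is the cardinality of the admissible subgroup of $C_{d+1}$. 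The entire content of the statement is therefore the \emph{reverse} inclusion: every graph automorphism must arise from a simplicial automorphism of the torus.

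For the reverse inclusion I would reuse the cycle-counting argument from the proof of Theorem~\ref{thm_heawood_graph_automorphisms}, observing that the hypothesis that the $k_i$ are not all equal places us strictly outside its unique exceptional case $\kk=(1,1,1)$. Concretely, any $\phi\in\operatorname{Aut}(\heawoodGraph{\kk})$ sends $4$-cycles to $4$-cycles and $6$-cycles to $6$-cycles. When $d>2$, the $6$-cycles of $\heawoodGraph{\kk}$ are exactly the hexagonal $2$-faces coming from pairs of adjacent simple roots $\alpha_i,\alpha_{i+1}$, while the $4$-cycles are exactly the squares from pairs $\alpha_i,\alpha_j$ with $|i-j|\geq 2$; hence $\phi$ preserves the set of $2$-faces, and the reconstruction result of Perles (a simplicial complex is determined by its $0$-, $1$-, and $2$-faces) forces $\phi$ to be simplicial, i.e. an element of $\operatorname{Aut}(\torus{\kk})$. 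When $d=2$ there are no $4$-cycles, so I would rely solely on the $6$-cycles; the key point is that additional, non-hexagonal $6$-cycles occur only for $\kk=(1,1,1)$, which is excluded, so again $\phi$ permutes the permutahedral hexagons and is induced by a simplicial automorphism. Combining the two inclusions yields $\operatorname{Aut}(\heawoodGraph{\kk})=\operatorname{Aut}(\torus{\kk})$, and the cardinality $2\ell\det M_\kk$ is inherited verbatim from the preceding theorem.

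The main obstacle is precisely the $d=2$ case: one must be certain that, once not all $k_i$ coincide, the quotient by $\sublatticeAffine{\kk}$ creates no short $6$-cycles beyond the permutahedral hexagons — this is the very phenomenon responsible for the extra automorphisms of the classical Heawood graph. This claim was already isolated in the proof of Theorem~\ref{thm_heawood_graph_automorphisms}, and I would either cite it directly or re-derive it by tracking how the identifications by $\sublatticeAffine{\kk}$ affect short closed walks, using the fact established in the proof of Proposition~\ref{prop_torus} that the minimal nontrivial displacements in $\sublatticeAffine{\kk}$ grow with the entries of $\kk$; the only coincidence forcing extra hexagonal cycles is then the minimal symmetric choice $\kk=(1,1,1)$. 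Everything else — the embedding of the two groups and the order count — is routine and follows the template of Theorem~\ref{thm_torus_automorphism}.
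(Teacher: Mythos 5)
Your forward inclusion, your $d>2$ argument, and the order count are all fine and agree with the paper. But there is a genuine gap in your $d=2$ case, and it sits exactly where the paper's proof does its real work. You assert that ``additional, non-hexagonal $6$-cycles occur only for $\kk=(1,1,1)$,'' importing the exceptional-case statement from the proof of \Cref{thm_heawood_graph_automorphisms}. That statement, however, was made under the standing hypothesis of that theorem that all $k_i$ are equal: among the tori $\torus{(k,k,k)}$, only $k=1$ produces extra $6$-cycles. Once the $k_i$ are allowed to differ, the family $\kk=(1,1,n)$ with $n\geq 2$ also has $6$-cycles that are not $2$-faces. Concretely, $2w_1-w_2=(5,-4,-1)$ lies in $\sublattice{(1,1,n)}$ (it is the first row of $M_{(1,1,n)}$ in the $w$-coordinates), and the walk $123,\,213,\,312,\,402,\,4\overline{1}3,\,5\overline{2}3,\,6\overline{2}2\equiv 123$ closes up into a $6$-cycle wrapping around the torus. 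Your proposed re-derivation (``minimal nontrivial displacements grow with the entries of $\kk$'') would reach the same false conclusion, because two of the three entries being $1$ already suffices to create a displacement reachable by six edges. So the claim your $d=2$ argument rests on is simply not true, and the argument collapses precisely for $\kk=(1,1,n)$.

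The paper's proof closes this hole with a dedicated analysis that your proposal is missing: after disposing of the cases where at least two $k_i\geq 2$ (there the $6$-cycles really are exactly the $2$-faces, and your argument goes through), it fixes a vertex of $\heawoodGraph{(1,1,n)}$, lists the six inequivalent $6$-cycles through it (see \Cref{fig_sixsixcycles}), and shows the facial hexagons are combinatorially distinguishable from the wrapping ones --- e.g.\ one facial cycle is the unique $6$-cycle through a given pair of neighbors, and the others are pinned down by vertices that occur in only one of the six cycles. Only after this does the Perles reconstruction argument apply, forcing every graph automorphism to be simplicial. To repair your proof you would need to supply this (or an equivalent) discrimination argument for $\kk=(1,1,n)$; without it, you have only shown the inclusion $\operatorname{Aut}(\torus{\kk})\subseteq\operatorname{Aut}(\heawoodGraph{\kk})$ together with the $d>2$ and ``two entries $\geq 2$'' cases of the converse.
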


\begin{proof}
    When \(d>2\), the proof in the all equal \(k_i\) setting applies, and we conclude that the automorphism groups of  \(\heawoodGraph{\kk}\)  and \(\torus{\kk}\) are the same.

    If \(d=2\), we consider the 6-cycles as we did previously.
    If all three \(k_i \geq 2\), the only 6-cycles are the 2-faces.
    If two \(k_i \geq 2\), again the only 6-cycles are the 2-faces.
    Since not all \(k_i\) are equal, the only remaining case is when \(k_i= (1,1,n)\).
    In this case, there are some 6-cycles which do not correspond to 2-faces.
    Six non-equivalent 6-cycles contain \(v\), for any fixed vertex \(v\) of \(\heawoodGraph{(1,1,n)}\).
    Since \(T\) is an automorphism of both \(\heawoodGraph{(1,1,n)}\) and  \(\torus{\kk}\), we choose \(v = 123\) without loss of generality.
    We now list the six 6-cycles containing \(123\), and illustrate them in Figure~\ref{fig_sixsixcycles}.

\begin{center}
\begin{tabular}{lllllll}
    123,&024,&015,&105,&204,&213,&123\\
    123,&132,&042,&$\overline{1}43$,&$\overline{1}34$,&024,&123\\
    123,&213,&312,&321,&231,&132,&123\\
    123,&213,&312,&402,&$4\overline{1}3$,&$5\overline{2}3$,&$6\overline{2}2$\\
    123,&213,&204,&$3\overline{1}4$,&$4\overline{1}3$,&$5\overline{2}3$,&$6\overline{2}2$\\
    123,&213,&312,&402,&501,&$6\overline{1}1$,&$6\overline{2}2$\\
\end{tabular} 
\end{center}

    Directly from the structure of these six 6-cycles, we can determine which cycles come from faces.

For any pair of neighorbors of \(123\), there is a unique cycle which describes that face.
There is a unique cycle containing \(024\) and \(132\), so it must be the distinguished cycle.
We distinguish the face containing \(213\) and \(132\) by the cycle which contains two unique vertices among all the cycles, \(321\) and \(231\).
Likewise we distinguish the face containing \(213\) and \(024\) by the cycle containing two unique vertices, \(105\) and \(015\).
We are therefore able to determine the 2-faces of the complex from the graph, and any graph automorphism which fixes \(123\) must also fix the 2-faces.
    After repeating Perles's argument to this automorphism which maps 2-faces to 2-faces, we find this automorphism is a simplical automorphism, and therefore the automorphisms of  \(\heawoodGraph{\kk}\) are exactly the automorphisms of \(\torus{\kk}\).
\end{proof}

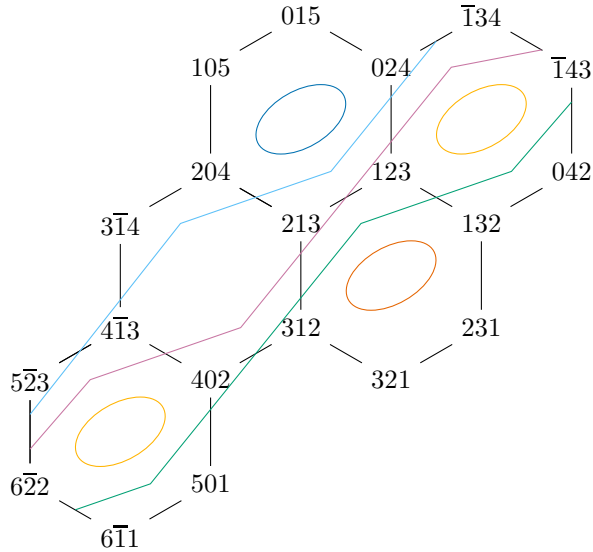
\begin{figure}[htb]
    \centering
     \begin{tikzpicture}%
	[x={(240:2cm)},
	y={(0:2cm)},
	z={(120:2cm)},
    scale=0.4]

\node (123) at (1,2,3) {123};
\node (213) at (2,1,3) {213};
\node (312) at (3,1,2) {312};
\node (321) at (3,2,1) {321};
\node (231) at (2,3,1) {231};
\node (132) at (1,3,2) {132};
\node (042) at (0,4,2) {042};
\node (-143) at (-1,4,3) {$\overline{1}43$};
\node (-134) at (-1,3,4) {$\overline{1}34$};
\node (024) at (0,2,4) {024};
\node (015) at (0,1,5) {015};
\node (105) at (1,0,5) {105};
\node (204) at (2,0,4) {204};
\node (402) at (4,0,2) {402};
\node (4-13) at (4,-1,3) {$4\overline{1}3$};
\node (3-14) at (3,-1,4) {$3\overline{1}4$};
\node (5-23) at (5,-2,3) {$5\overline{2}3$};
\node (6-22) at (6,-2,2) {$6\overline{2}2$};
\node (501) at (5,0,1) {501};
\node (6-11) at (6,-1,1) {$6\overline{1}1$};

\draw (123) -- (024) -- (015) -- (105) -- (204) -- (213);
\draw (123) -- (132) -- (042) -- (-143) -- (-134) -- (024);
\draw (123) -- (213) -- (312) -- (321) -- (231) -- (132);
\draw (312) -- (402) -- (4-13) -- (5-23) -- (6-22);
\draw (213) -- (204) -- (3-14) -- (4-13) -- (5-23) -- (6-22);
\draw (402) -- (501) -- (6-11) -- (6-22);

\draw[purple] (-3/3,11/3,10/3) -- (-1/3,8/3,11/3) -- (10/3,1/3,7/3) -- (14/3,-4/3,8/3) -- (17/3,-6/3,7/3);
\draw[bluishgreen] (-2/3,12/3,8/3) -- (1/3,10/3,7/3) -- (5/3,5/3,8/3) -- (16/3,-2/3,4/3) -- (18/3,-9/6,9/6) ;
\draw[skyblue] (-3/6,15/6,12/3) -- (4/3,4/3,10/3) -- (8/3,-1/3,11/3) -- (16/3,-6/3,8/3);

\draw[orange](0,3,3) circle (2/3); 
\draw[orange](5,-1,2) circle (2/3); 
\draw[red](0,0,0) circle (2/3); 
\draw[blue](-3,-3,0) circle (2/3); 
\end{tikzpicture}  
            
    \caption{The six 6-cycles of \(\heawoodGraph{(1,1,n)}\).}
    \label{fig_sixsixcycles}
\end{figure}

\section{Hyperbolic setting}
The work we have done here can be repeated in higher levels of generality.
For any triangulation with a group action, and a quotient of that group, a simplicial complex can be obtained. 
When the group and quotient have high degrees of symmetry, the resulting complex will also be highly symmetric.
Likewise, the dual graph will exhibit similarly symmetric properties.
As a concrete example, we use the Klein quartic. 

\begin{example}[\((2,3,7)\)-hyperbolic triangle group]
We refer to the labels in Figure~\ref{fig_klein_quartic}.
We represent the \((2,3,7)\)-hyperbolic triangle group by equilateral triangles infinitely tiling hyperbolic space, with 7 triangles to a vertex.
This group is generated by \(S,F_1,F_2\), where \(S\) is the rotation by \(\pi\) around the center of the edge \(ab\), \(F_1\) is the reflection about the line \(ab\), and \(F_2\) is the reflection about the line \(aj\). 
The composition \(F_1F_2\) gives a rotation by \(2 \pi /7\) around \(a\). 
To match with other conventions, we label \(T = F_1F_2\).
The group generated by \(S,T\) alone is the orientation preserving \((2,3,7)\)-hyperbolic triangle group.
We can see quite naturally that \(S^2 = Id\) and that \(T^7 = Id\). 
Further inspection reveals that \((ST)^3 = Id\) as well. 
\end{example}

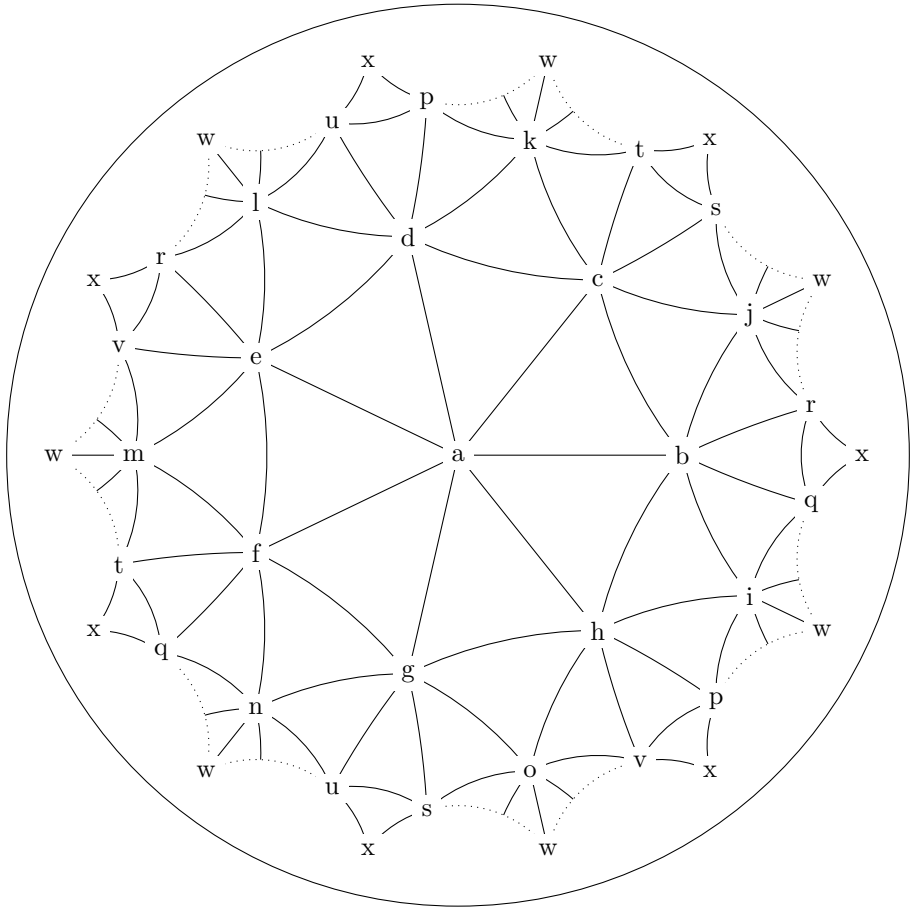
\begin{figure}[htb]
\centering
 \begin{tikzpicture}[ hyperbolic disc radius=1cm,
every to/.style={hyperbolic disc}, scale=6]
\draw (0,0) circle[radius=\pgfkeysvalueof{/tikz/hyperbolic disc radius}];

\tikzmath{
\c1 = 0.49697042; 
\b1 = 0; 
\c2 = 0.30985599; 
\b2 = 0.38854712; 
\c3 = 0.64702455; 
\b3 = 0.31159060;
\c4 = 0.57199748;
\b4 = 0.54434242;
\c5 = 0.73029830;
\b5 = 0.48451033;
\c6 = 0.80682641;
\b6 = 0.38854712;
\c7 = 0.78221864;
\b7 = 0.10781369;
\b{107} = -0.10781369;
\c8 = 0.83413898;
\b8 = 0.26888295;
\c9 = 0.89550976;
\b9 = 0;
\c{10} = 0.68691809;
\b{10} = 0.41851483;
\c{11} = 0.75549450;
\b{11} = 0.27611446;
}

\foreach \a in {0,...,6} {
\draw[rotate=\a*360/7] (0,0) -- (\c1,\b1);
\draw[rotate=\a*360/7] (\c1,\b1) to (\c3,\b3);
\draw[rotate=\a*360/7] (\c1,\b1) to (\c2,\b2);
\draw[rotate=\a*360/7] (\c2,\b2) to (\c3,\b3);
\draw[rotate=\a*360/7] (\c3,\b3) to (\c4,\b4);
\draw[rotate=\a*360/7] (\c7,\b7) to (\c3,\b3);
\draw[rotate=\a*360/7] (\c{11},\b{11}) to (\c3,\b3);
\draw[rotate=\a*360/7] (\c1,\b1) to (\c7,\b7);
\draw[rotate=\a*360/7] (\c1,\b1) to (\c7,\b{107});
\draw[rotate=\a*360/7] (\c7,\b{107}) to (\c7,\b7);
\draw[rotate=\a*360/7] (\c3,\b3) to (\c{10},\b{10});
\draw[rotate=\a*360/7] (\c3,\b3) -- (\c6,\b6);
\draw[rotate=\a*360/7] (\c9,\b9) to (\c7,\b{107});
\draw[rotate=\a*360/7] (\c9,\b9) to (\c7,\b7);
\draw[rotate=\a*360/7, dotted] (\c7,\b7) to (\c6,\b6);
\draw[rotate=\a*360/7, dotted] (\c4,\b4) to (\c6,\b6);
}

\node[fill=white] at (0,0) {a};
\node[fill=white] at (\c1,\b1) {b};
\node[fill=white] at (\c2,\b2) {c};
\node[fill=white] at ($(0,0)!1!360/7*2:(\c1,\b1)$) {d};
\node[fill=white] at ($(0,0)!1!360/7*3:(\c1,\b1)$) {e};
\node[fill=white] at ($(0,0)!1!360/7*4:(\c1,\b1)$) {f};
\node[fill=white] at ($(0,0)!1!360/7*5:(\c1,\b1)$) {g};
\node[fill=white] at ($(0,0)!1!360/7*6:(\c1,\b1)$) {h};
\node[fill=white] at ($(0,0)!1!360/7*6:(\c3,\b3)$) {i};
\node[fill=white] at ($(0,0)!1!360/7*0:(\c3,\b3)$) {j};
\node[fill=white] at ($(0,0)!1!360/7*1:(\c3,\b3)$) {k};
\node[fill=white] at ($(0,0)!1!360/7*2:(\c3,\b3)$) {l};
\node[fill=white] at ($(0,0)!1!360/7*3:(\c3,\b3)$) {m};
\node[fill=white] at ($(0,0)!1!360/7*4:(\c3,\b3)$) {n};
\node[fill=white] at ($(0,0)!1!360/7*5:(\c3,\b3)$) {o};
\node[fill=white] at ($(0,0)!1!360/7*6:(\c7,\b7)$) {p};
\node[fill=white] at ($(0,0)!1!360/7*0:(\c7,\b7)$) {r};
\node[fill=white] at ($(0,0)!1!360/7*1:(\c7,\b7)$) {t};
\node[fill=white] at ($(0,0)!1!360/7*2:(\c7,\b7)$) {u};
\node[fill=white] at ($(0,0)!1!360/7*3:(\c7,\b7)$) {v};
\node[fill=white] at ($(0,0)!1!360/7*4:(\c7,\b7)$) {q};
\node[fill=white] at ($(0,0)!1!360/7*5:(\c7,\b7)$) {s};
\node[fill=white] at ($(0,0)!1!360/7*6:(\c7,\b{107})$) {v};
\node[fill=white] at ($(0,0)!1!360/7*0:(\c7,\b{107})$) {q};
\node[fill=white] at ($(0,0)!1!360/7*1:(\c7,\b{107})$) {s};
\node[fill=white] at ($(0,0)!1!360/7*2:(\c7,\b{107})$) {p};
\node[fill=white] at ($(0,0)!1!360/7*3:(\c7,\b{107})$) {r};
\node[fill=white] at ($(0,0)!1!360/7*4:(\c7,\b{107})$) {t};
\node[fill=white] at ($(0,0)!1!360/7*5:(\c7,\b{107})$) {u};
\node[fill=white] at ($(0,0)!1!360/7*6:(\c6,\b6)$) {w};
\node[fill=white] at ($(0,0)!1!360/7*0:(\c6,\b6)$) {w};
\node[fill=white] at ($(0,0)!1!360/7*1:(\c6,\b6)$) {w};
\node[fill=white] at ($(0,0)!1!360/7*2:(\c6,\b6)$) {w};
\node[fill=white] at ($(0,0)!1!360/7*3:(\c6,\b6)$) {w};
\node[fill=white] at ($(0,0)!1!360/7*4:(\c6,\b6)$) {w};
\node[fill=white] at ($(0,0)!1!360/7*5:(\c6,\b6)$) {w};
\node[fill=white] at ($(0,0)!1!360/7*5:(\c9,\b9)$) {x};
\node[fill=white] at ($(0,0)!1!360/7*6:(\c9,\b9)$) {x};
\node[fill=white] at ($(0,0)!1!360/7*0:(\c9,\b9)$) {x};
\node[fill=white] at ($(0,0)!1!360/7*1:(\c9,\b9)$) {x};
\node[fill=white] at ($(0,0)!1!360/7*2:(\c9,\b9)$) {x};
\node[fill=white] at ($(0,0)!1!360/7*3:(\c9,\b9)$) {x};
\node[fill=white] at ($(0,0)!1!360/7*4:(\c9,\b9)$) {x};

\end{tikzpicture}
\caption{The triangulated Klein quartic.}
\label{fig_klein_quartic}

\end{figure}

To transform this infinite group with a triangulation into a finite simplicial complex, we need to take a quotient of the group which results in a finite group.
The quotient is then by \((ST^3)^4 = Id\), which gives the Klein quartic.

Reading the set of triangles in Figure~\ref{fig_klein_quartic}, we find the following facet list.

\begin{center}
\begin{tabular}{ccccccc}
[a,b,c],&[a,c,d],&[a,d,e],&[a,e,f],&[a,f,g],&[a,g,h],& [a,h,b], \\\
[b,c,j],&[c,d,k],&[d,e,l],&[e,f,m],&[f,g,n],&[g,h,o],& [h,b,i], \\\
[c,j,s],&[d,k,p],&[e,l,r],&[f,m,t],&[g,n,u],&[h,o,v],&[b,i,q],\\\
[c,s,t],&[d,p,u],&[e,r,v],&[f,t,q],&[g,u,s],&[h,v,p],&[b,q,r],\\\
[c,t,k],&[d,u,l],&[e,v,m],&[f,q,n],&[g,s,o],&[h,p,i],&[b,r,j],\\\
[t,k,m],&[u,l,n],&[v,m,o],&[q,n,i],&[s,o,j],&[p,i,k],&[r,j,l],\\\
[x,q,r],&[x,r,v],&[x,v,p],&[x,p,u],&[x,u,s],&[x,s,t],&[x,t,q],\\\
[w,i,n],&[w,n,l],&[w,l,j],&[w,j,o],&[w,o,m],&[w,m,k],&[w,k,i].\\\
\end{tabular}
\end{center}

The automorphism group of this complex is order \(336 = 7 \cdot 24 \cdot 2\), which is seen by the \(7\)-fold rotational symmetry, the transitive action on 24 vertices, and an orientation-reversing action.
The dual graph also has automorphism group of order \(336 = 7 \cdot 24 \cdot 2\). 
Much as in the euclidean case, there are no graph automorphisms that do not come from simiplical automorphisms, since the small cycles of the graph only appear as links of vertices.

The Klein quartic is only one example. 
There are many more group/subgroup pairs in the hyperbolic setting and beyond.
However, there are not any easily parameterized families of such examples in other settings.
Instead, an interested reader may wish to extend this work to the setting of Hurwitz surfaces, a special class of quotients of the \((2,3,7)\) triangle group.
It is certainly possible that surprises await in the \((2,3,n)\) triangle group.
In particular there may be another example where the graph automorphism group is larger than the simplicial complex automorphism group.

\section{Previous appearances in the literature}
We were not the first to consider such symmetrical tori, and offer this brief overview of previous work.

The first time such a torus was considered is in the work of Heawood, where a 7-region map of the torus was given~\cite{heawood_torus_1984}. 
The dual triangulation is the torus \(T_{1,1,1}\).

\url{https://gdz.sub.uni-goettingen.de/id/PPN600494829_0024?tify=%7B%22pages%22%3A%5B350%5D%2C%22view%22%3A%22toc%22%7D}

The extension to higher dimensions begins with K\"uhnel and Lassmann. 
They first introduced a 15 vertex triangulation of the 3-torus which is \(T_{1,1,1,1}\)~\cite{kuhnel_lassmann_threedtorus_1984}, and then generalized this to all dimensions, \(T_{1,1,\ldots,1}\)~\cite{kuhnel_lassmann_symmetrictorii_1988}.


Our construction uses the lattice \(Z^n\), but this isn't strictly necessary.
Grigis extended the torus of K\"uhnel and Lassmann to a triangulation for every generic lattice, which is then strongly related to \(T_{1,1,\ldots,1}\), although with a different underlying triangulation~\cite{grigis_delaunaytriangulations_2009}.

The triangulations obtained by Grigis are essentially geometrically motivated re-triangulations of the same complex discovered by K\"uhnel and Lassmann.
Sikiri\'c and Garber extended this idea, and gave even more re-triangulations, without relying on an underlying lattice~\cite{garber_sikiric_periodictriangulations_2020}. 
However, they also essentially only considered triangulations of \(T_{0,0,\ldots,0}\), a one-point torus.

Some efforts were made extending the Heawood graph in two dimensions as well.
Any graph which can be obtained by a quotient of $\affineGraph{2}$ can be described as a generalized Heawood graph.
These graphs are all vertex-transitive and 3-regular.
However, not all of these graphs are necessarily edge-transitive.
Those which are edge-transitive appear in the Foster Census, and in fact make up a significant subset of the census.
The first graphs that appear in the Census and are torus quotients of $\affineGraph{2}$ appear in Figure~\ref{fig_fostercensus}.

\begin{figure}[htb]
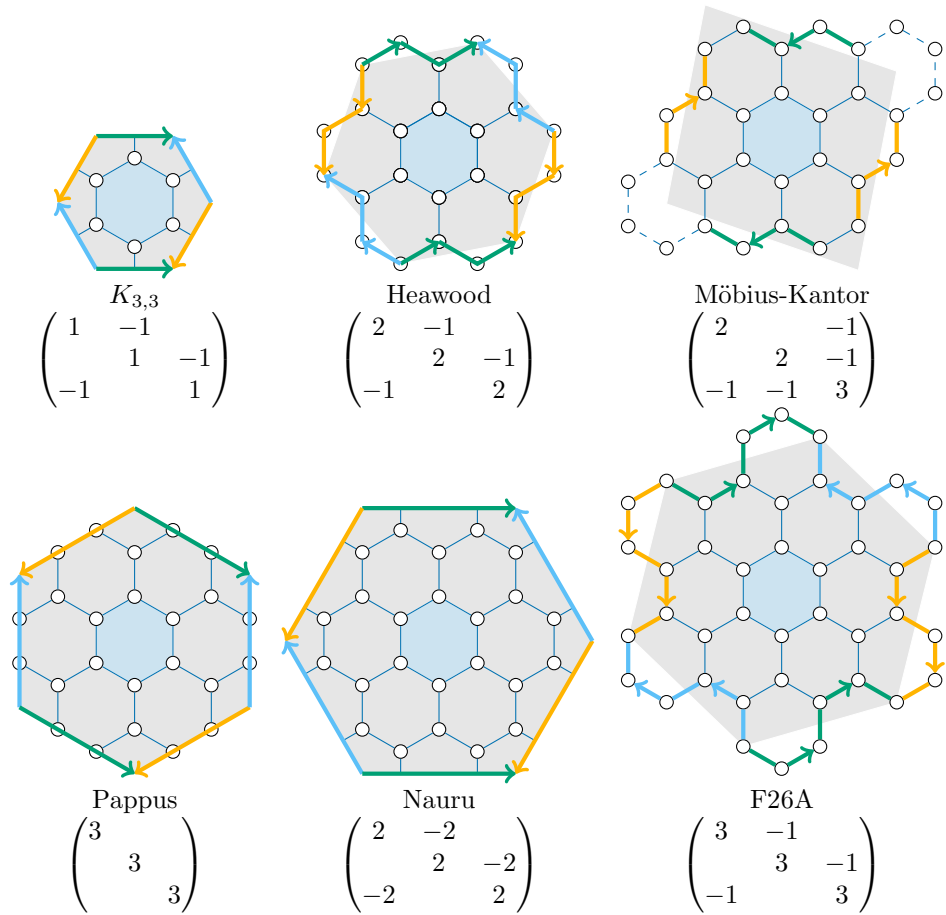

    \centering
    \include{figures/fig_fostercensus}
    \caption{The first six graphs in the Foster Census which can be presented as a torus quotient.}
    \label{fig_fostercensus}
\end{figure}

This brief summary of previous work, in the new language of our construction, suggests many ways to continue this work.

\section{Discussion and open questions}
\subsection{Map coloring}
Returning to the original motivation for Heawood, we can consider map coloring of higher dimensional spaces, with particular interest in the torus.
It is well known that if the regions of the map in dimension higher than two have no restriction, then an unbounded number of colors may be required.
It remains an interesting problem to find constraints on the regions so that the number of colors required to color any map is finite.
A possible restriction would be to require each region of a map to be a Voronoi cell.
By further requesting that the points generating the Voronoi cell are in sufficiently generic position, the result is a simple celluation, and therefore dual to a simplicial complex.
However, even in this case, the number of colors may be unbounded, due to a result of Gonska~\cite[Lemma 1.3.3]{gonska_thesis_2012}.
They showed that the cyclic polytope can be realized as a Delauney triangulation.
The dual complex is therefore a Voronoi celluation, and can be completed to a sphere or torus.
Since the graph of the cyclic polytope is a complete graph for \(d>3\), the resulting Voronoi cellulation will have unbounded map color number.

There is a model which gives bounded coloring number for higher dimensional settings.
If the regions of the map are all non-overlapping spheres, then the coloring number is bounded~\cite{bagchi_higher_2013}.
They rely on the fact that the \emph{kissing number}, the number of balls of the same radius can lie tangent to a single ball, is bounded.
They show that the map coloring number of a $d$-sphere is between $d+2$ and $\tau(d)+1\leq 3^d$ for $d\geq 2$, where $\tau(d)$ is the \emph{kissing number} of the $d$-dimensional Euclidean space.
This model unfortunately does not fill space, and so does not implicitly describe a simplicial complex.
A simplicial complex can be assigned to it by taking the flag complex of the adjacency graph, but this will not in general give a triangulation of the underlying space.

\begin{question}
    Is there a model for triangulations of \(T^d\) for \(d>2\) which have bounded chromatic number for the underlying graph?
\end{question}

A quite restrictive positive answer to this question is the Voronoi celluation where the points are elements of a lattice.
This is relatively unsatisfying, however, since the number of triangulations generated this way is finite in each dimension.

\subsection{Minimal triangulations}

Although it was not recognized at the time, Heawood's map of the torus is dual to the minimal triangulation of the 2-torus.
The generalization of this triangulation given in~\cite{kuhnel_lassmann_symmetrictorii_1988} was directly motiviated by the desire to find minimal triangulations of the torus in all dimensions.
Among all known triangulations of the torus, this generalization, $\torus{(1,\ldots,1)}$, has the fewest vertices.
This has lead to the following conjecture.

\begin{conjecture}[Minimal triangulation, Cf.~{\cite[Conjecture 21]{lutz_triangulated_2005}}]
\label{conj_minimaltriangulation}
The minimal number of vertices required to triangulate the \(d\)-torus is \(2^{d+1}-1\).
In particular, $\torus{(1,\ldots,1)}$ (with $d+1$ ones in the sub-index) is a minimal triangulation of the \(d\)-torus. 
\end{conjecture}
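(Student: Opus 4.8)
The plan is to split \Cref{conj_minimaltriangulation} into an upper bound and a lower bound, since only the latter carries any real content. The upper bound is already established: \Cref{prop_torus} produces the explicit triangulation $\torus{(1,\dots,1)}$ of the $d$-torus on exactly $D_{(1,\dots,1)} = 2^{d+1}-1$ vertices. Thus everything reduces to showing that \emph{no} triangulation of $T^d$ uses fewer vertices. I would phrase this as an extremal question about $f$-vectors: among all simplicial complexes on $n < 2^{d+1}-1$ vertices that triangulate a closed $d$-manifold, none can realize the $\Z_2$-Betti numbers $\beta_i = \binom{d}{i}$ of the torus.

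The case $d=2$ is classical and shows exactly what a proof should feel like. The Dehn--Sommerville relation $2f_1 = 3f_2$ together with $f_0 - f_1 + f_2 = \chi(T^2) = 0$ forces $f_1 = 3f_0$, and substituting into the trivial bound $f_1 \le \binom{f_0}{2}$ gives $f_0^2 - 7f_0 \ge 0$, i.e. $f_0 \ge 7$. My first attempt in general would be to imitate this: use the Dehn--Sommerville relations and $\sum_i (-1)^i f_i = 0$ to express as much of the $f$-vector as possible in terms of $(f_0,\dots,f_{\lfloor d/2\rfloor})$, and then combine the face bounds $f_i \le \binom{f_0}{i+1}$ with the strongest available Lower Bound Theorems for $\Z_2$-homology manifolds (the rigidity and socle inequalities of Novik--Swartz, or Kühnel-type generalized Heawood inequalities relating $f_1$, $f_0$ and $\beta_1$).

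The hard part, and the reason the conjecture is open for $d \ge 4$, is that all of these standard tools yield lower bounds on $f_0$ that grow only polynomially in $d$, whereas the conjecture asserts the exponential bound $2^{d+1}-1$. Closing this gap is the central obstacle. The feature of the torus that the general manifold bounds throw away is the richness of its $\Z_2$-cohomology: $T^d$ has total Betti number $\sum_i \binom{d}{i} = 2^d$, and its cohomology ring is an exterior algebra generated in degree one, so every class is a product of one-dimensional classes. I would try to exploit this multiplicatively, aiming to show that a triangulation must carry enough vertices to support all $2^d$ independent cup products \emph{simultaneously}, in the spirit of tightness theory, where $\torus{(1,\dots,1)}$ is conjecturally a tight and hence extremal triangulation. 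A complementary route is to pass to the universal cover: a triangulation of $T^d$ on $n$ vertices lifts to a $\Z^d$-periodic triangulation of $\R^d$ with exactly $n$ vertex orbits, reducing the conjecture to a lower bound on the number of orbits in any $\Z^d$-periodic triangulation of $\R^d$.

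Finally, for the first genuinely unknown case $d = 3$ I would not seek a conceptual argument but instead certify the bound computationally, as in the evidence reported in~\cite{lutz_triangulated_2005}: enumerate all triangulated closed $3$-manifolds on at most $14$ vertices and verify that none is homeomorphic to $T^3$, which establishes $\torus{(1,1,1,1)}$ as minimal in dimension three. The conceptual exponential lower bound for general $d$ remains the essential difficulty, and I expect it to require a genuinely new idea rather than a refinement of existing $f$-vector inequalities.
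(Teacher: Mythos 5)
The statement you were asked to prove is stated in the paper only as a conjecture (Lutz's Conjecture~21, \Cref{conj_minimaltriangulation}); the paper contains no proof of it, remarking only that it holds for $d=1,2$ and is supported by computational evidence for $d=3$. The parts of your proposal that are actual arguments are correct: the upper bound follows from \Cref{prop_torus}, since $\torus{(1,\dots,1)}$ triangulates the $d$-torus on $\det M_{(1,\dots,1)}=2^{d+1}-1$ vertices, and your $d=2$ computation ($2f_1=3f_2$, $\chi=0$, hence $f_1=3f_0$, which with $f_1\le\binom{f_0}{2}$ forces $f_0\ge 7$) is the standard proof of the Möbius--Heawood bound. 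But as you yourself concede, this does not prove the statement: the entire content for $d\ge 3$ is the lower bound, and your proposal offers only research directions for it.

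The gap is that none of your three proposed routes is currently capable of producing the exponential bound $f_0\ge 2^{d+1}-1$. The Dehn--Sommerville relations combined with the strongest known lower bound theorems for homology manifolds (Novik--Swartz socle inequalities, K\"uhnel-type inequalities relating $f_1$, $f_0$ and $\beta_1$) yield bounds on $f_0$ that are polynomial in $d$, so the approach does not merely need refinement --- it is structurally insufficient, as you note. The tightness heuristic has two holes: tightness of $\torus{(1,\dots,1)}$ is itself only conjectural, and tightness is a property of one triangulation rather than an extremal statement quantified over all triangulations of $T^d$, so it would not by itself imply vertex-minimality. The universal-cover reformulation (counting vertex orbits of $\Z^d$-periodic triangulations of $\R^d$) is exactly equivalent to the original problem and brings no new lower-bound tool. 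Finally, your $d=3$ plan overstates what the cited evidence provides: Lutz proved minimality only among \emph{vertex-transitive} triangulations, verified edge-minimality, and ran BISTELLAR heuristically; a complete enumeration of all triangulated closed $3$-manifolds on up to $14$ vertices has never been carried out and lies far beyond the existing censuses, so even the $d=3$ case of the conjecture remains open and cannot at present be ``certified computationally'' as your proposal suggests.
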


Though not stated as a conjecture, or even a question, this is hinted at in the introductions of \cite{kuhnel_lassmann_symmetrictorii_1988} and \cite{grigis_fourtorus_1998}.

This conjecture is true for \(d=1,2\).
In~\cite{lutz_triangulated_2005}, Lutz shows that there is no triangulation of the \(3\)-torus that is vertex-transitive with fewer vertices. 
They also further show that this triangulation uses a minimal number of edges.
Finally, Lutz attempted to use their software package BISTELLER to find a smaller triangulation for \(d=3,4\), unsuccessfully.
Despite all this strong evidence, the conjecture remains open, and for higher \(d\), there are no good tools available.

\subsection{Graph properties of \texorpdfstring{\(\heawoodGraph{\kk}\)}{Hk}}
The Heawood graph has several interesting properties, and many of these generalize to \(\heawoodGraph{\kk}\).
The Heawood graph is vertex-transitive, as is \(\heawoodGraph{\kk}\).

The Heawood graph has chromatic number 2, as does \(\heawoodGraph{\kk}\) when \(d\) is even.
Every element of \(\sublattice{\kk}\) can be labeled in an alternating way, there are no odd cycles.
When \(d\) is even, this labeling is constant under the displacement by the \(w_i\), and therefore any quotient has a consistent labeling of each vertex.
When \(d\) is odd, the labeling of \(\sublattice{\kk}\) is not consistent under translations by \(w_i\), so \(\kk\) can be chosen such that \(\heawoodGraph{\kk}\) is not bipartite.

The Heawood graph is edge-transitive, however \(\heawoodGraph{\kk}\) is not edge-transitive in higher dimensions.
From the vertex transitivity, we find that any edge of \(\heawoodGraph{\kk}\) of the form \(e_i-e_j\) can be transformed into any other edge of the same form.
The automorphism group which is a subgroup of of \(C_{d+1}\) determines the extent of the equivalence classes of edges under automorphisms of \(\heawoodGraph{\kk}\).
For example, in  \(\heawoodGraph{(1,1,1,1)}\), there is no automorphism which transforms edges of the form \(e_1-e_3\) into edges of the form \(e_1-e_2\), since each of \(T,R,\) and \(C\) doesn't change the cyclic distance between the coordinates altered along the edge.

\begin{conjecture}
    For any \(\kk\), \(\heawoodGraph{\kk}\) is Hamiltonian, with the cycle given by alternating edges \(e_i-e_{i+1}\) and \(e_{i-1}-e_i\) for some \(i\).
\end{conjecture}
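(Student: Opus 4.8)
The plan is to exploit the structural description of $\heawoodGraph{\kk}$ as a highly symmetric quotient of the permutahedral tiling graph, and to read the conjectured cycle as a \emph{zigzag} through this structure. Recall that each vertex has degree $d+1$, its neighbours being obtained by the $d+1$ cyclic ``adjacent–residue swaps''; this gives a proper edge-colouring into $d+1$ perfect matchings and identifies $\heawoodGraph{\kk}$ with the Cayley graph of a finite quotient of the affine symmetric group $\widetilde{\mathfrak{S}}_{d+1}$ on its Coxeter generators $s_0,\dots,s_d$, with the translation lattice $\lattice{d}/\sublattice{\kk}$ acting with $d!$ orbits (\Cref{thm_heawoodgraph}, \Cref{lem_quotient_size}). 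The conjectured cycle, however, alternates the two \emph{position}-classes $e_i-e_{i+1}$ and $e_{i-1}-e_i$, whose net effect over one period is the translation $2e_i-e_{i-1}-e_{i+1}$. So the first step is to show that this alternating walk is a well-defined closed walk in the quotient and to compute, via the bipartite (even $d$) or orientation structure, that each of its cycles is a single coset-orbit of the cyclic group generated by that translation.

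I would carry out the dimension $2$ case first, where the picture is cleanest. There $\heawoodGraph{\kk}$ is a hexagonal-lattice torus, and because any two distinct residues are cyclically adjacent modulo $3$, the two position-classes are genuine perfect matchings; their union is a $2$-factor and, by the period computation, the net displacement is exactly $w_i=2e_i-e_{i-1}-e_{i+1}$. Hence the $2$-factor is a single Hamiltonian cycle precisely when $w_i$ generates $G:=\lattice{2}/\sublattice{\kk}$. A Smith-normal-form computation on the defining matrix shows $G\cong\Z/D_\kk$ is always cyclic, so it remains to produce some $i$ with $\operatorname{ord}(w_i)=D_\kk$. Here I would use that for each prime $p\mid D_\kk$ at most one of $w_1,w_2,w_3$ can lie in $pG$ (they generate $G$ and satisfy $w_1+w_2+w_3=0$); thus if $D_\kk$ has at most two prime divisors, some index avoids every prime and generates, settling those $\kk$ directly.

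The main obstacle is exactly the remaining cases, together with the step from $d=2$ to general $d$. When $D_\kk$ has three or more prime factors the naive single-direction zigzag can genuinely split into several parallel cycles for \emph{every} $i$: for instance $\kk=(5,4,3)$ gives $G\cong\Z/60$ with $\operatorname{ord}(w_1)=15$, $\operatorname{ord}(w_2)=20$, $\operatorname{ord}(w_3)=12$, so no $w_i$ generates and the literal construction fails. The plan to overcome this is \emph{lane-merging}: splice the $[G:\langle w_i\rangle]$ parallel zigzag cycles into one using short detours along the third edge-direction, a standard surgery for Hamilton cycles on toroidal honeycombs; the work is to prove these detours can always be installed consistently, which amounts to a connectivity/parity bookkeeping on the coset graph of $\langle w_i\rangle$ in $G$. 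For $d>2$ the position-classes are no longer perfect matchings (two residues need not be adjacent), so the explicit alternation must be replaced altogether; I would instead build the Hamilton cycle recursively through the cosets of a maximal parabolic $\langle s_1,\dots,s_{d-1}\rangle$ in the affine-symmetric-group presentation (a Gray-code-style induction), or invoke Hamiltonicity of Cayley graphs on the relevant quotient groups.

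Concretely, the sequence of steps would be: (1) establish the Cayley/affine-permutation description and the net-displacement formula; (2) prove the $d=2$, $\leq 2$-prime case via the generation criterion; (3) prove the lane-merging lemma to cover all $d=2$ cases, thereby correcting the explicit cycle statement of the conjecture; (4) set up the parabolic-coset induction for $d>2$ and verify the base and gluing steps using \Cref{prop_duality_heawood_torus} to keep track of faces. I expect step (3), the uniform single-cycle guarantee when no single translation direction generates $G$, to be the genuine difficulty, since it is precisely the point where the elegant one-parameter description breaks and a global merging argument must replace it; establishing that such a merging exists for \emph{every} $\kk$ is the crux of a full proof.
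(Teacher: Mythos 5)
You should first be aware that the paper does not prove this statement at all: it is stated as a conjecture and left open. The authors only verify by hand that for \(\kk=(1,3,2)\) one choice of \(i\) fails (the cycle closes after 12 of the 36 vertices, because \(6w_1\in\sublattice{\kk}\)) while another choice succeeds, and they end with the question of whether a good \(i\) always exists. So there is no proof in the paper to compare against, and your proposal has to be judged on its own merits.

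On those merits, your \(d=2\) analysis is correct and is exactly the right formalization of the paper's closure computation: per period the zigzag translates by \(w_i\), so the direction-\(i\) 2-factor splits into cycles of length \(2\,\mathrm{ord}(w_i)\) in \(G=\lattice{2}/\sublattice{\kk}\) (the even- and odd-indexed rails lie in the two classes of the bipartition, which translations preserve, so no premature collision occurs), and direction \(i\) is Hamiltonian if and only if \(w_i\) generates \(G\). Your claim that \(G\) is always cyclic of order \(D_\kk\) checks out: eliminating \(w_3=-w_1-w_2\), the relation matrix has rows \((k_1+1,-k_2)\) and \((k_3,\,k_2+k_3+1)\), and a common prime divisor of all four entries would have to divide both \(k_3\) and \(k_3+1\). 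The pigeonhole for at most two prime divisors of \(D_\kk\) is also sound. Most importantly, your example \(\kk=(5,4,3)\) is correct: I verified that \(D_\kk=60\) and \(\mathrm{ord}(w_1)=15\), \(\mathrm{ord}(w_2)=20\), \(\mathrm{ord}(w_3)=12\), so no zigzag direction is Hamiltonian. This does not support the conjecture — it \emph{refutes} it as literally stated, answering the paper's closing question negatively. That is the genuinely valuable content of your writeup, and you should present it as a counterexample rather than burying it inside a repair strategy.

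As a proof, however, the proposal is incomplete precisely where you flag it. The lane-merging surgery is asserted as "standard," not proved, and any spliced cycle no longer has the conjectured alternating form, so even a completed step (3) would establish plain Hamiltonicity of \(\heawoodGraph{\kk}\) — a different, weaker statement than the conjecture. For \(d>2\) the situation is worse than you state: the alternating walk is not merely "not a perfect matching," it can halt outright (from \((1,2,3,4)\) with \(i=2\), after the step \(e_2-e_3\) both continuations \(\pm(e_1-e_2)\) repeat a residue mod 4, so no second step exists), so the conjectured cycle description is not even well defined there, and your parabolic-coset induction is a separate unexecuted program, not a proof sketch of this statement. In summary: you have correctly disproved the conjecture for \(d=2\), proved it under the restriction that \(D_\kk\) has at most two prime divisors, and left everything else open.
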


It is easy to construct a \(\kk\) so that the above conjecture is not true for a particular \(i\).
When \(\kk=(1,3,2)\), choosing \(i=1\) does not give a Hamiltonian cycle. The path which starts at \(123\) continues \(213,312,402,501,6\overline{1}1,7\overline{1}0,\ldots,(12)\overline{3}\overline{3},(13)\overline{4}\overline{3}\).
Expanding \((1,0,-1) M_\kk  = (3,-3,-3)\), then realizing this point as \(3w_1-3w_2-3w_3 = (12, -6, -6)\), we see that \(123\) and \((13)\overline{4}\overline{3}\) differ by an integer linear combination of the rows of \(M_\kk\).
So this cycle returns to its starting point after only 12 vertices.
From Lemma~\ref{lem_quotient_size_affine}, we can compute directly that there are 36 vertices of \(\heawoodGraph{(1,3,2)}\).
However in this example, choosing \(i=2\) does give a Hamiltonian cycle.
Is there always a choice of \(i\) which will give a Hamiltonian cycle?

\subsection{Identifying opposite facets on polytopes}
As we have shown in~\Cref{prop_permutahedron_torus}, the result of identifying opposite facets on a permutahedron by translation is a topological torus. 
This result is quite intriguing because even in dimension~2, identifying opposite edges of regular $2n$-gons gives rise to interesting surfaces. 

For instance, identifying opposite 
edges by translation (preserving the orientation) of either a 4-gon or a 6-gon gives rise to a torus (with one hole), while identifying opposite edges (preserving the orientation) of an 8-gon gives rise to a 2-hole torus. 
One simple way of seeing this is to compute the Euler characteristic and use the classification theorem for closed surfaces according to their genus and orientability. 
Identifying opposite edges of an 8-gon gives rise to $1$ vertex, $4$ edges, and $1$ face. 
So, the Euler characteristic of the resulting space is $1-4+1=-2$. 
Since the result is orientable, one can deduce that it is a 2-hole torus.  
Continuing the pattern, a \(4n\)-gon with opposite sides identified is a n-holed torus, as is a \((4n+2)\)-gon with opposite sides identified.

It would be interesting to understand the topology of other families of polytopes with opposite facets identified by translation. 
Natural families that fit into this context are 
\begin{enumerate}
\item \emph{Permutahedra arising from finite Coxeter groups}. An example of this, is the type $B$ permutahedron, which is the dual of the barycentric subdivision of a cube. For instance, the type $B_2$ permutahedron is an 8-gon, and identifying opposite facets by translation gives a 2-hole torus. 
\item \emph{Postnikov's generalized permutahedra}~\cite{postnikov_permutohedra_2009} obtained by removing some pairs of opposite facets of the classical permutahedron $\perm{d}$. That is, removing facets corresponding to proper subsets of $[d+1]$ in a collection $\mathcal{A}\subset 2^{[d+1]}$ that is closed under taking complements. 
\item The realization of the cyclohedron in~\cite{postnikov_permutohedra_2009}, which is obtained as a Minkowski sum 
\[
\sum_{A\in \mathcal A} \Delta_A,
\]
where $\mathcal{A}$ is the collection of cyclic intervals of the form $[i,j]$ and $[1,i]\cup [j,n]$ for $1\leq i\leq j \leq n$, and 
$
\Delta_A = \conv \{e_a: \, a\in A\}
$.
\item \emph{Zonotopes} in general, which are Minkowski sums of line segments. 
\end{enumerate}
Importantly, we refer to polytopes whose opposite facets are translates of each other, although the slightly more general setting where opposite facets are combinatorially equivalent is also worth investigating.
Poincar\'e's homology sphere is an example in this more general setting;
it is obtained by identifying opposite facets of the dodecahedron using a clockwise twist to align the facets.

\bibliographystyle{plain}
\bibliography{biblio}

\end{document}